\newcommand\MSym[2][]{\mathcal{M}^{(#2)}_{\mbox{$\mathcal{S}$}^{#1}}}
\newcommand\MhgSym[2][]{\mathcal{M}^{(#2)}_{\mbox{$\mathcal{S}$}^{#1}_{\rm hg}}}
\newcommand\MphgSym[2][]{\mathcal{M}^{(#2)}_{\mbox{$\mathcal{S}$}_{\rm phg}^{#1}}} 
\newcommand\OPNerror[2]{{\displaystyle\mathcal{E}}^{(#1,#2)}_{\psi}}
\begin{document}

\title[Factorization of hyperbolic operators and microlocalization]{Factorization of Second-order strictly hyperbolic operators with non-smooth coefficients and microlocal diagonalization}

\author[MARTINA GLOGOWATZ]{\textrm{Martina Glogowatz\\
	\newline
	Faculty of Mathematics\\
	University of Vienna\\
	Austria\\
	\url{martina.glogowatz@oeh.univie.ac.at}\\
	\newline
	December 23, 2011}}
%

\thanks{Research mainly supported by the Austrian Science Fund (FWF) project Y237}

\subjclass[2000]{35S05, 46F30.}
\keywords{algebras of generalized functions, parameter dependent pseudodifferential operators, wave front sets.}

\begin{abstract}
We study strictly hyperbolic partial differential operators of second order with non-smooth coefficients. After modelling them as semiclassical Colombeau equations of log-type we provide a factorization procedure on some time-space-frequency domain. As a result the operator is written as a product of two semiclassical first-order constituents of log-type which approximates the modelled operator microlocally at infinite points. We then present a diagonalization method so that microlocally at infinity the governing equation is equal to a coupled system of two semiclassical first-order strictly hyperbolic pseudodifferential equations. Furthermore we compute the coupling effect. We close with some remarks on the results and future directions. 
\end{abstract}

\maketitle

\numberwithin{equation}{section}
\newtheorem{thm}{Theorem}[section]
\newtheorem{defn}[thm]{Definition}
\newtheorem{lem}[thm]{Lemma}
\newtheorem{prop}[thm]{Proposition}
\newtheorem{cor}[thm]{Corollary}

\theoremstyle{definition}
\newtheorem*{rem}{Remark}

\section{Introduction}\label{sec:Intro} 

When studying strictly hyperbolic partial differential equations with generalized coefficients the results commonly depend on the appropriate choice of the asymptotic scale of the regularizing parameter. In this paper we consider certain types of second order partial differential equations with generalized coefficients of the form
\begin{equation}\label{eqn:Basic}
LU=F
\end{equation}
where $U, F$ are Colombeau generalized functions in $\mathcal{G}_{2,2}$\footnote{We refer to Section~\ref{sec:BasicNotions} for the precise asymptotic behavior that one has to study in this case.} on $\mathbb{R}^{n+1}$ and on the level of representatives the operator $L$ acts as 
\begin{equation*}
(u_\varepsilon)_\varepsilon \mapsto (L_{\varepsilon}(x,z,D_t,D_x,D_z)u_\varepsilon)_\varepsilon \qquad \forall (u_\varepsilon)_\varepsilon \in \mathcal{M}_{H^{\infty}}.
\end{equation*}
In detail the operator $L_\varepsilon$ is considered to be of the following form
\begin{equation}\label{eqn:Gov}
L_\varepsilon := \partial_{z}^2 + \sum_{j=1}^{n-1} b_{j,\varepsilon}(x,z) \partial_{x_j}^2 -  c_{\varepsilon}(x,z) \partial_{t}^2.
\end{equation}
Here the coefficient $c_\varepsilon(x,z)$ is of log-type with exponent $r \in \mathbb{N}$, i.e.\ for some constant $C > 0$ we have $\lVert \partial^{\alpha} c_\varepsilon \rVert_{L^{\infty}} = \mathcal{O} (\log^{|\alpha|/r}(C/\varepsilon))$ as $\varepsilon \to 0$, and strictly non-zero in the sense that $\exists \varepsilon_1 \in (0,1]$ such that $\inf_{y \in \mathbb{R}^n} |c_\varepsilon(x,z)| \geq C$ for all $\varepsilon \in (0,\varepsilon_1]$ and some constant $C>0$ independent of $\varepsilon$. Also $b_{j,\varepsilon}$ is of log-type with exponent $r$ and strictly non-zero. Moreover we assume that $c_\varepsilon \to c$ and $b_{j,\varepsilon} \to b_{j}$ in the H\"{o}lder space $\mathcal{C}^{0,\mu}(\mathbb{R}^n)$ with exponent $\mu \in (0,1)$ as $\varepsilon \to 0$.

The aim of this paper is to provide a symbolic calculus in order to explain a diagonalization with respect to the parameter $z$ of the equation in (\ref{eqn:Basic}). To do so we will reduce the diagonalization problem to a factorization theorem which in the case of smooth coefficients can be found in \cite[Appendix II]{Kumano-go:81}, \cite[Chapter 23]{Hoermander:3}. We note that in these references the results are only valid for operators with simple characteristics on the phase space with the zero section excluded. This is different to our approach as we are interested in factorizations with respect to the parameter $z$.

For recent contributions to a related topic we refer the reader to the work by Garetto and Oberguggenberger \cite{GaMoe:2011a}. There the authors established existence and regularity results for solutions for strictly hyperbolic systems with Colombeau coefficients using symmetrisation up to regularizing errors. In detail, they proved existence of generalized solutions in the case of slow scale coefficients and, in addition they showed regularity in the case of logarithmic slow scale coefficients. Again, the square roots of the principal symbol of the operator are assumed to be simple on the phase space without the zero section in this case. 

So when studying an equation of the form (\ref{eqn:Basic}) with coefficients that satisfy strongly positive logarithmic slow scale estimates one can adapt the results in \cite{GaHo:05,Kumano-go:81} to obtain a diagonalization in some microlocal subregion of the phase space and microlocal regularity has to be understood in a $\mathcal{G}_{2,2}^\infty$ sense\footnote{Recall that $\mathcal{G}_{2,2}^\infty$ is the space of regular generalized functions in $\mathcal{G}_{2,2}$ and is characterized by uniform $\varepsilon$-growth in all derivatives.}. This is a generalization of the results in the smooth coefficient case which will be briefly explained in the beginning of Subsection~\ref{subsec:PreviousResults}. Since in the Colombeau framework the evolution behavior of propagating singularities is not yet sufficiently understood it is not clear how one can derive well-posed approximated Cauchy problems from the resulting microlocal first-order equations due to the diagonalization. For the theory for first-order hyperbolic pseudodifferential equations with generalized symbols we refer to \cite{Hoermann:04, Moe:09, GaMoe:2011a}. 

However, to analyze operators as in (\ref{eqn:Gov}) with less regular coefficients as in \cite{GaMoe:2011a} we try a different approach. In order to overcome the necessity of the logarithmic slow scale assumption (e.g. construction of an approximative inverse) we associate to the operator $L_\varepsilon$ in (\ref{eqn:Gov}) a semiclassical modification $L_{\psi,\varepsilon}$ such that $L_{\psi,\varepsilon}=\varepsilon^2 L_\varepsilon$. Here the $\psi=\psi(\varepsilon)$ refers to the phase function in which the semiclassical scaling is kept to be retained and depend on the regularizing parameter $\varepsilon \in (0,1]$. The detailed explanation of the correspondence between the operators $L$ and $L_\psi$ is given in Subsection~\ref{subsec:ScColombeau}. Then instead of working with the equation (\ref{eqn:Basic}) we consider the corresponding semiclassical problem
\begin{equation}\label{eqn:ScBasic}
L_\psi U=F
\end{equation}
where $U, F$ are generalized functions in $\mathcal{G}_{2,2}$ and the operator $L_\psi: \mathcal{G}_{2,2} \to \mathcal{G}_{2,2}$ acts on the level of representatives as 
\begin{equation*}
(u_\varepsilon)_\varepsilon \mapsto (L_{\psi,\varepsilon}(x,z,D_t,D_x,D_z)u_\varepsilon)_\varepsilon \qquad \forall (u_\varepsilon)_\varepsilon \in \mathcal{M}_{H^{\infty}}.
\end{equation*}
Due to the additional $\varepsilon$-dependent semiclassical scaling parameter we then apply the theory of generalized semiclassical pseudodifferential operators which we call the $\psi$-pseudo-differential operators in the following for short. This enables us to state a factorization theorem for $L_\psi$ in terms of two first-order $\psi$-pseudodifferential operators with respect to the parameter $z$ modulo two different types of error operators. As already mentioned above the factorization is only valid when imposing certain restrictions on the underlying time-space-frequency domain which we left $\varepsilon$-independent. In detail the error operators are characterized by a semiclassical negligibility condition outside a compact set. Since global semiclassical negligible operators are easy to handle as they map moderate nets to negligible ones the interesting case is to understand the compactly supported error operators. But, due to the semiclassical scaling, worse $\varepsilon$-asymptotics are introduced when studying boundedness of such compactly supported operators. The basic notions and a general calculus of $\psi$-pseudodifferential operators can be found in Sections~\ref{sec:BasicNotions} and ~\ref{sec:PsiPseudoCalculus}, the factorization procedure is explained in Section~\ref{sec:Fact}.

To eliminate the remainder terms produced in the factorization procedure we then proceed in Section~\ref{sec:WFInfty} by presenting an adapted notion of microlocal regularity which essentially corresponds to the semiclassical wave front set at infinite points within the Colombeau framework. This allows us to overcome the difficulties that arise due to the compactly supported error operators occurring in the factorization. Concepts of the wave front set in Colombeau's theory have been explored in \cite{NPS:98, GaHo:05, GaHo:06, Garetto:06}. For an introduction to semiclassical analysis and microlocalization we refer to \cite{Martinez:02, EvZw, GuSt:10, Alexandrova:08}. Again we remark that unlike to the case of smooth coefficients we lose the property that singularities are propagating on geometrically determined phase space trajectories.

In Section~\ref{sec:Diag} we then combine the factorization with our notion of microlocal regularity in order to describe the desired microlocal diagonalization method. As a result we obtain a coupled system of two first-order $\psi$-pseudodifferential equations that approximate equation (\ref{eqn:ScBasic}) microlocally at infinite points on an adequate subdomain of the phase space.
\subsection{The smooth background case}\label{subsec:PreviousResults}

A particular example of a strictly hyperbolic partial differential equation of second order is the wave equation describing phenomena such as propagation of elastic waves and vibrations. However, in this subsection of the introduction we give a motivation for the present survey and is devoted to one-way wave propagation in inhomogeneous acoustic media in the case of smooth background coefficients. For more information on the relevance in seismic imaging and migration models we refer the reader to \cite{Claerbout:85, BCS:01, SdH:05, SdH:06}.

To start with a real life problem we may ask whether and how one can establish a tap-proof communication link between a source and a receiver in an underwater environment. In the following we analyze how the correlation between the source and the receiver location can help to detect a desired information in downward continuation problems. The mathematical reformulation of such problems corresponds to initial value problems of second-order partial differential equations with a space-like direction as evolution parameter and is in general not well-defined. 

To overcome the ill-posedness the concept of wave-field decomposition enables us to rewrite the full wave equation into a coupled system of approximative one-way wave equations. Using microlocal techniques the system of one-way equations can then be decoupled assuming that the wave field propagates in one direction within certain propagation angles and prohibits propagation in the opposite direction. Therefore the approximation allows to determine the high-frequency components of the solution as they propagate along curved trajectories in the phase space.

In the following we recall the crucial statements given by Stolk for the directional wave field decomposition in the present of smooth background data \cite{Stolk:04, Stolk:05}. For a discussion on the inverse scattering problem with simultaneous consideration of possible reflection data see for example \cite{Stolk:00, SdH:06, dHlRB:03} and the references therein. Also we refer to \cite{LeRousseau:06, LeRH:06} for investigations of the Cauchy problem of first-order pseudodifferential equations and to \cite{ZZB:03} for numerical implementations.

As in \cite{Stolk:04} our basic type of model will be the wave equation for inhomogeneous acoustic media in n-dimensions, $n \geq 2$. Since we are interested in approximative one-way equations we allocate the vertical direction $z$, which we call the depth, the lateral directions are denoted by $x$. The medium itself is described by the wave speed $c = c(x,z)$ and the fluid density $\rho = \rho(x,z)$. Also we let $U = U(t,x,z)$ denote the acoustic wave field and $F=F(t,x,z)$ a source which usually describes the initiation of an acoustic wave. Then the acoustic wave equation is given by
\begin{equation} \label{eqn:Stolk}
PU:=\Big( -\frac{1}{\rho} \frac{1}{c^2} \partial_{t}^2 + \sum_{j=1}^{n-1} \partial_{x_j} \frac{1}{\rho} \partial_{x_j} + \partial_{z} \frac{1}{\rho} \partial_{z} \Big) U = F 
\end{equation}
where $U$ and $F$ are in the distribution space $\mathcal{D}'(\mathbb{R}^{n+1})$. Further we assume smooth background data, i.e.\ the wave speed and the density are functions in $\mathcal{C}^{\infty} (\mathbb{R}^n)$, which shall also satisfy the boundedness conditions $0 < c(x,z), \rho(x,z) < \infty$ for all $(x,z) \in \mathbb{R}^{n-1}\times \mathbb{R}$. 

In the case of slow varying media and the presence of a source function we follow Stolk's approach and restrict the analysis to a microlocal region $I_{\Theta_{2}}$ that is given by
\begin{equation*}
\begin{split}
I_{\Theta_{2}} &:= \{ (t,x,z,\tau,\xi,\zeta) \ | \ (x,z,\tau,\xi) \in I_{\Theta_{2}}',\ |\zeta| \leq C |\tau| \}\\
I_{\Theta_{2}}' &:= \{ (x,z,\tau,\xi) \ | \ \tau \neq 0,\ |\xi| \leq \sin\Theta_2 |c(x,z)^{-1}\tau| \}
\end{split}
\end{equation*}
for some angle $\Theta_2  \in (0,\pi/2)$. To give an impression of the first of these domains we give the following picture (see \cite{LeRousseau:06}): \vspace{10pt}

\figinit{0.6cm}
\figpt 1:(0,0) \figpt 2:(-0.4,0) \figpt 3:(4,0)
\figpt 4:(0,-0.4) \figpt 5:(0,4) \figpt 6:(3,0)
\figpt 7:(0,3) \figptrot 8 :A= 7 /1, -62/  
\figptsorthoprojline 9 = 8 /1, 3/
\figptrot 10 :B= 7 /1, -31/ \figptrot 11 :C= 7 /1, -34/
\figvectP 12 [1,11] \figvectP 13 [9,8]
\figptinterlines 14 :[1,12; 8,13]
\figptsorthoprojline 15 = 14 /1, 5/
\figptrot 16 :D= 7 /1, -70/ \figpt 17:(0,4.2)
%
\psbeginfig{}
\def\Dim{0.2}
\psset(width=1,fillmode=yes,color=0.9)\psline[1,9,14,15]
\psreset{first} \psline[2,3] \psline[4,5] \psline[1,8] 
\psline[1,10] \psarccircP 1 ; 0.7 [8,7] \psarccircP 1 ; 1.2 [10,7]
\psarrowhead[2,3] \psarrowhead[4,5] 
\psset (width=1.5) \pssetdash{3} \psarccircP 1 ; 3 [6,7]
\psendfig
%
\figvisu{\figBoxA}{}
{
\figwritegce 1 :\small{$\theta$}(7pt,34pt) \figwritegce 1 :\small{$\Theta_2$}(15pt,19pt)
\figwritee 3:$|\xi|$(3pt) \figwritee 16:\small{$\zeta^2 + |\xi|^2 = \frac{\tau^2}{c(x,z)^2}$}(5pt)
\figwritesw 17:$|\zeta|$(6pt)
}
\centerline{\box\figBoxA}
\small{\textbf{Figure 1:} Here the shaded area corresponds to $I_{\Theta_2}$ at given $(t,x,z)$ and a given frequency $\tau$.
The bold dotted line designates the characteristic set $\text{Char}(P)$ and $\theta$ is the propagation angle of the singularities.}\normalsize
\vspace{10pt}

Then under the assumption that the wave front set of $U$ is contained in $I_{\Theta_{2}}$, one can rewrite a microlocal equivalent model to (\ref{eqn:Stolk}) in terms of a coupled system of one-way wave equations with the depth as evolution parameter.

The main result is then the following. The equation
\begin{equation*}
PU = F \qquad \text{microlocally on $I_{\Theta_{2}}$}
\end{equation*}
is equivalent to the system of two first-order strictly hyperbolic partial differential equations of the form
\begin{equation}\label{eqn:System}
P_{0,\pm} u_{\pm} := \big( \partial_z - i B_{\pm}(x,z,D_t,D_x) \big) u_{\pm}  =  f_{\pm}  \quad \text{microlocally on $I_{\Theta_{2}}$}
\end{equation}
and the plus-minus sign refers to downward and upward migration. Here $u_{\pm}, f_{\pm}$ are distributions and $B_{\pm}$ are pseudodifferential operators of order 1 and can be chosen selfadjoint. Furthermore the coupling effect of the counterpropagating constituents $u_{\pm}, f_{\pm}$ and the original data $U,F$ can be computed by a Douglis Nirenberg elliptic pseudodifferential operator transfer matrix.

To model wave propagation in the downward direction in the source-free case one decouples (\ref{eqn:System}) microlocally and we are interested in the solutions to the problem
\begin{equation*}
\begin{split}
&PU = 0 \qquad z > z_0\\
&\text{WF}(U) \cap \{ z= z_0 \ , \ \zeta/\tau > 0 \} = \emptyset
\end{split}
\end{equation*}
for some initial depth $z_0 \in \mathbb{R}$ such that $U |_{z_0}$ is well-defined. 

Using the geometry of the microlocal region we let the set $J_{\Theta,+}(z_0) \subset T^*\mathbb{R}^{n+1} \setminus 0$ consisting of points $(t,x,z,\tau,\xi,\zeta)$ so that the bicharacteristics $(t(z),x(z),\tau(z),\xi(z))$ corresponding to $B_+$, parametrized by $z$, and with propagation angle $\theta(z)$, pass through $(t_0,x_0,\tau_0,\xi_0)$ at initial depth $z=z_0$ and the points $(x(z),z, \tau(z),\xi(z))$ remain in $I_{\Theta}'$ for all $z \in [0,Z]$. So $J_{\Theta,+}(z_0)$ is the set that can be reached from the initial depth $z=z_0$ while staying in $I_{\Theta,+}$ and the propagation angle $\theta(z)$ along the bicharacteristic is always smaller than $\Theta$ (cf. Figure 1).

Since the equation for downward migration in (\ref{eqn:System}) holds only microlocally one obtains approximative solutions when studying a perturbation $P_{+}$ of the operator $P_{0,+}$ including an additional damping term $C = C(x,z,D_t,D_x)$ which vanishes in $I_{\Theta_{1}}$ for some fixed positive angle $\Theta_1 < \Theta_2$ and suppresses singularities outside $I_{\Theta_{2}}$. Also note that $u_{-}$ is vanishing on $I_{\Theta_{2}} \cap \{ z = z_0 \}$ so that the perturbed initial value problem for downward propagation now reads
\begin{equation*}
\begin{aligned}
P_{+}u_{+} & = \left( \partial_z - i B_{+} + C \right) u_{+} = 0 \qquad \mathbb{R}^{n} \times (z_0,Z)\\
u_{+} |_{z_0} & = Q^{-1}_{+} (z_0) U |_{z_0}
\end{aligned}
\end{equation*}
where $Q^{-1}_+$ is the essential component of the transfer matrix mentioned above. As a result the solution to the initial value problem for the perturbed first-order pseudodifferential equation can be related to that of $P_{0,+}u_+ = 0$ using the geometry in the region $J_{\Theta,+}$. In detail one can recover the high frequency part of the original wave field on some subset $J_{\Theta_1,+}$ of the phase space that can be reached from an initial given depth $z_0$ while staying in $I_{\Theta_1}$ for propagation angles $\theta(z) < \Theta_1 < \Theta_2$. 
\section{Basic Notions}\label{sec:BasicNotions}

In this section we specify the basic notions that are needed for our constructions. As the problem is treated within the framework of Colombeau algebras we refer to the literature \cite{Colombeau:85, Moe:92, NPS:98, GKOS:01, GGO:05, Garetto:08} for a systematic treatment in this field. 

One of the main objects in our setting are Colombeau generalized functions based on $L^2$-norm estimates which were first introduced in \cite{BiMoe:92}. The elements in this algebra are given by equivalence classes $u := [(u_{\varepsilon})_{\varepsilon\in (0,1]}]$ of nets of regularizing functions $u_\varepsilon$ in the Sobolev space $H^{\infty}=\cap_{k \in \mathbb{Z}} H^k$ corresponding to certain asymptotic seminorm estimates. More precisely, we denote by $\mathcal{M}_{H^{\infty}}$ the nets of moderate growth whose elements are characterized by the property
\begin{equation*}
\forall \alpha \in \mathbb{N}^n \ \exists N \in \mathbb{N}: \quad \lVert \partial^{\alpha} u_{\varepsilon} \rVert_{L^{2}(\mathbb{R}^n)} = \mathcal{O}({\varepsilon}^{-N}) \qquad  \text{ as } {\varepsilon} \to 0.
\end{equation*}
Negligible nets are denoted by $\mathcal{N}_{H^{\infty}}$ and are nets in $\mathcal{M}_{H^{\infty}}$ whose elements satisfy the following additional condition
\begin{equation*}
\forall q \in \mathbb{N}: \quad \lVert u_{\varepsilon} \rVert_{L^{2}(\mathbb{R}^n)} = \mathcal{O}({\varepsilon}^{q}) \qquad \text{ as } {\varepsilon} \to 0,
\end{equation*}
see \cite[Proposition 3.4]{Garetto:05b}. Then the algebra of generalized functions based on $L^{2}$-norm estimates is defined as the factor space $\mathcal{G}_{H^{\infty}} = \mathcal{M}_{H^{\infty}} / \mathcal{N}_{H^{\infty}}$ for which we continue to write $\mathcal{G}_{2,2} (\mathbb{R}^n)$ as in \cite{BiMoe:92}. For simplicity, we shall also use the notation $(u_\varepsilon)_{\varepsilon}$ instead of $(u_\varepsilon)_{\varepsilon \in (0,1]}$ throughout the paper. 

Using \cite[Theorem 2.7]{BiMoe:92} we first note that the distributions $H^{-\infty} = \cup_{k \in \mathbb{Z}} H^k$ are linearly embedded in $\mathcal{G}_{2,2}(\mathbb{R}^n)$ by convolution with a mollifier $\varphi_\varepsilon(x) = \varepsilon^{-n} \varphi(\varepsilon^{-1}x)$ where $\varphi \in \mathscr{S}(\mathbb{R}^n)$ is a Schwartz function such that
\begin{equation}\label{mollifier}
\int \! \varphi(x) \, dx  =  1, \qquad \int \! x^{\alpha}  \varphi(x) \, dx  =  0 \quad \text{ for all } |\alpha| \geq 1.
\end{equation}
Further, by the same result, $H^{\infty}(\mathbb{R}^n)$ is embedded as a subalgebra of $\mathcal{G}_{2,2} (\mathbb{R}^n)$. 

More generally, we introduce Colombeau algebras based on a locally convex vector space $E$ topologized through a family of seminorms $\{ p_i \}_{i \in I}$ as in \cite[Section 3]{Garetto:05a}, \cite[Section 1]{GHO:09}. Again we call the elements of
\begin{align*}
\mathcal{M}_{E} = \{ (u_{\varepsilon})_{\varepsilon} \in E^{(0,1]} \hspace{2pt} | \hspace{2pt} \forall i \in I  \ \exists N \in \mathbb{N}:  p_i( u_{\varepsilon}) = \mathcal{O} ({\varepsilon}^{-N}) \ \mbox{as} \ {\varepsilon} \to 0\}
\end{align*}
$E$-moderate and the set
\begin{align*}
\mathcal{N}_E = \{ (u_{\varepsilon})_{\varepsilon} \in E^{(0,1]} \hspace{2pt} | \hspace{2pt} \forall i \in I  \ \forall q \in \mathbb{N}: p_i( u_{\varepsilon}) = \mathcal{O} ({\varepsilon}^{q}) \ \mbox{as} \ {\varepsilon} \to 0\}
\end{align*}
is said to be $E$-negligible. Then $\mathcal{N}_E$ is an ideal in $\mathcal{M}_{E}$ and the space of Colombeau algebra based on $E$ is defined by the factor space $\mathcal{G}_E = \mathcal{M}_E / \mathcal{N}_E$ and possesses the structure of a $\widetilde{\mathbb{C}}$-module.

In order to realize the log-type with exponent $r \in \mathbb{N}$ conditions on the coefficients of the model operator (\ref{eqn:Gov}) a rescaling in the mollification is required, see \cite{HdH:06, Moe:92}. In detail, we are going to consider regularized functions where the asymptotic growth is estimated in powers of $\omega_\varepsilon := (\log(C/\varepsilon))^{1/r}$ for some $r \in \mathbb{N}$ and a constant $C >0$ such that $(\omega_\varepsilon)_\varepsilon$ is strongly positive. Here the exponent $r$ is not essential in the further considerations but we give some remarks in this regard in Section~\ref{sec:Diag}.  
In this case the regularization is obtained by convolution with the logarithmically scaled mollifier $\varphi_{\omega_\varepsilon^{-1}}(.) := \omega_\varepsilon^n \varphi(\omega_\varepsilon .)$ with $\varphi \in \mathscr{S}(\mathbb{R}^n)$ as in (\ref{mollifier}). 

\subsection*{Further terminology}
In order to establish a factorization theorem for generalized semiclassical partial differential operator as in (\ref{eqn:ScBasic}) we will have to give a meaning to the square root of such operators. For this reason we introduce generalized semiclassical pseudodifferential operators which are characterized by symbols with respect to a certain phase function. Because the phase function is of simple type we shall initially discuss the main notions of generalized symbols which satisfy asymptotic growth conditions with respect to $\omega_\varepsilon = (\log(C/\varepsilon))^{1/r}$ as already mentioned above. As usual, we use the notation $\langle \xi \rangle := (1 + |\xi|^2)^{1/2}$.

First, we let $m \in \mathbb{R}$ and denote by $S^{\! m} = S^{\! m}(\mathbb{R}^n \times \mathbb{R}^n)$ the set of symbols of order $m$ as introduced by H\"{o}rmander in \cite[Definition 18.1.1.]{Hoermander:3}. Furthermore the space $S^{\! m}_{\! \rm hg} = S^{\! m}_{\! \rm hg}(\mathbb{R}^n \times (\mathbb{R}^n \setminus 0))$ consists of homogeneous functions $a \in \mathcal{C}^{\infty} (\mathbb{R}^n \times (\mathbb{R}^n \setminus 0))$ of order $m$, i.e. $a(x,\lambda \xi) = \lambda^{m}a(x,\xi)$ for all $\lambda > 0$ and $\xi \neq 0$, such that 
\begin{equation*}
\forall \alpha,\beta \in \mathbb{N}^n \ \exists C >0: \qquad |\partial^{\alpha}_\xi \partial^{\beta}_x a(x,\xi)| \leq C|\xi|^{m-|\alpha|} \quad \text{for }(x,\xi) \in \mathbb{R}^n \times (\mathbb{R}^n \setminus 0).
\end{equation*}
Since the symbol class $S^{\! m}$ satisfies global estimates we remark that $\mathcal{M}_{{\displaystyle\mathcal{S}}^m}$ is different to that in \cite[Section 1.4]{GHO:09}. We note that $\mathcal{M}_{{\displaystyle\mathcal{S}}^m}$ is the space $\mathcal{M}_E$ by setting $E = S^{\! m}$. 
In the following we will typically encounter subspaces of $\mathcal{M}_{{\displaystyle\mathcal{S}}^m}$ subjected to two different asymptotic scales. This extends the definition of $\mathcal{M}_{{\displaystyle\mathcal{S}}^m}$ and $\mathcal{N}_{{\displaystyle\mathcal{S}}^m}$ in the following way:
\begin{defn}\label{defn:MSymb}
Let $\nu$ be a non-negative real number and $l,k \in \mathbb{R}$. For $m \in \mathbb{R}$ we let $(a_\varepsilon)_\varepsilon$ be a family of symbols $a_\varepsilon \in S^{\! m}$. We then say that $(a_\varepsilon)_\varepsilon$ is in the generalized symbol class $\MSym[m,k]{\nu,l}$ if and only if
\begin{equation}\label{TwoScaleSymb}
\begin{split}
\exists \eta \in (0,1] \ \forall & \alpha, \beta \in \mathbb{N}^n \ \exists C > 0 \text{ such that } \forall \varepsilon \in (0,\eta]:\\
& q^{(m)}_{\alpha, \beta} (a_{\varepsilon}) \hspace{-1.5pt} := \hspace{-1.5pt} \sup_{(x,\xi) \in  \mathbb{R}^{2n}} | \partial_{\xi}^{\alpha} \partial_x^{\beta} a_\varepsilon (x,\xi) | \langle \xi \rangle^{-m + |\alpha|} \leq C \varepsilon^{k} \omega_{\varepsilon}^{\nu |\beta| +l}.
\end{split}
\end{equation}
We call $m, k$ and $(\nu,l)$ the order, the growth type and the log-type respectively of $\MSym[m,k]{\nu,l}$.
\end{defn}
This definition is similar to that for generalized symbols $\mathcal{S}^{m,\mu}_{\rho,\delta,\omega}$ in \cite[Definition 4.1]{GGO:05}. Also, since $(\omega_\varepsilon)_\varepsilon$ is a strongly positive slow scale net we note that a generalized symbol $(a_\varepsilon)_\varepsilon$ in $\MSym[m,k]{\nu,l}$ can always be estimated in the following way
\begin{equation}\label{RegSymb}
\exists \eta \in (0,1] \ \forall \alpha, \beta \in \mathbb{N}^n \ \exists C > 0: \quad q^{(m)}_{\alpha, \beta} (a_{\varepsilon}) \leq C \varepsilon^{k-1}  \quad \forall \varepsilon \in (0,\eta].
\end{equation}
Later in Lemma~\ref{lem:psiparametrix} we give the construction scheme for approximative inverse operators and the advantage of using (\ref{TwoScaleSymb}) rather than (\ref{RegSymb}) will be clear then. Hereafter we typically work in spaces $\MSym[m,k]{\nu,l}$ with $\nu \in \{ 0,1 \}$. Analogously to Definition~\ref{defn:MSymb} one introduces the space $\MhgSym[m,k]{\nu,l}(\mathbb{R}^n \times (\mathbb{R}^n \setminus 0))$.

Furthermore the notion of negligibility in $\MSym[m,k]{\nu,l}$ is defined as follows:
\begin{defn}
An element of $\MSym[m,k]{\nu,l}$ is said to be negligible, denoted by $\mathcal{N}_{{\displaystyle\mathcal{S}}^m}$, if the following condition is fulfilled
\begin{equation*}
\begin{split}
\exists \eta \in (0,1] \ \forall q \in \mathbb{N} \ \forall \alpha, \beta \in \mathbb{N}^n \ \exists C > 0 : \quad
q^{(m)}_{\alpha, \beta} (a_{\varepsilon}) \leq C \varepsilon^{q} \quad \forall \varepsilon \in (0,\eta].
\end{split}
\end{equation*}
\end{defn}
To give an example, let $P(x,D_x) = \sum_{|\alpha| \leq m} a_{\alpha}(x) D^{\alpha}_x$ be a partial differential operator with bounded and measurable coefficients. Then the logarithmically scaled regularization is given by $p_\varepsilon (x,\xi) := (p(.,\xi) \mathbin{*} \varphi_{\omega_\varepsilon^{-1}})(x)$. Moreover the symbol $(p_\varepsilon)_\varepsilon$ is of class $\MSym[m,0]{1,0}$ and of log-type up to order $(\infty,r)$, cf. \cite[Remark 3.2]{Hoermann:04}.

Also we will make use of the following symbol classes:
\begin{defn}\label{defn:SymbOpenSet}
Let $U \subset \mathbb{R}^n \times \mathbb{R}^n$ be open and conic with respect to the second variable. We say that a generalized symbol $\left(a_\varepsilon\right)_\varepsilon$ is in $\MSym[m,k]{\nu,l}(U)$ if $a_\varepsilon \in \mathcal C^{\infty} (U)$ for fixed $\varepsilon \in (0,1]$ and there is a constant $K>0$ independent of $\varepsilon$ such that: 
\begin{multline*}
\qquad \exists \eta \in (0,1] \ \forall \alpha,\beta \in \mathbb{N}^n \ \exists C>0: \\
|\partial_{\xi}^{\alpha} \partial_x^{\beta} a_\varepsilon(x,\xi)| \leq C \varepsilon^{k} \omega_\varepsilon^{\nu |\beta|+l} \langle \xi\rangle^{m-|\alpha|} \quad \text{for } (x,\xi) \in V_{K}, \ \varepsilon \in (0,\eta],\qquad
\end{multline*}
where $V_{K}:= \{ (x,\xi) \in U \ | \ \ d(\xi,\eta) \geq K,\ \eta \in \partial \text{pr}_2(U) \neq \emptyset \}$. We observe that, if $(x,\xi) \in V_{K}$ then $(x,\lambda \xi)\in V_{K}$ for all $\lambda \geq 1$.
\end{defn}
Note that Definition~\ref{defn:MSymb} is equivalent to Definition~\ref{defn:SymbOpenSet} in the case that $U=\mathbb{R}^n \times \mathbb{R}^n$. Also, Definition~\ref{defn:SymbOpenSet} is different to a straightforward Colombeau generalization of global symbols $S^m(U)$. Also note the differece to the definition of the classical symbols which in the case of local symbols is given in \cite[Definition 2.3, page 141]{ChPi:82}.
\subsection{The governing equation in a semiclassical Colombeau setting}\label{subsec:ScColombeau}

As this is a first investigation we will concentrate on operators as given in (\ref{eqn:Gov}) which are characterized by homogeneity. Also note that our model operators have less regular log-type coefficients than logarithmic slow scale. 

As pointed out in the motivation the main aim is to provide a diagonalization procedure using factorization theorems and microlocal analysis. Because it turns out that the log-type condition is not strong enough for our constructions we will introduce certain pseudodifferential operators defined on the Heisenberg group, so-called semiclassical generalized pseudodifferential operators. 

In the usual theory of semiclassical pseudodifferential operators in the Kohn-Nirenberg or standard quantization one associates to a symbol $a \in S^m$ an operator $A: \ensuremath{{\mathcal S}} \to \ensuremath{{\mathcal S}}$ in the following way
\begin{equation*}
\begin{split}
A_\psi(x,D_x) u(x) &:= (2\pi)^{-n} \int (\mathcal{F}a)(q,p) e^{-ipq \hbar /2} \pi_\hbar(q,p) u (x) \,d q \,d p =\\
&= (2\pi \hbar)^{-n} \int e^{i (x-y) \xi/ \hbar} a(x,\xi) u (y) \,dy \,d \xi
\end{split}
\end{equation*}
where in the first line $\mathcal{F}(a)$ denotes the Fourier transform of $a$ on the projected Heisenberg group and $\pi_\hbar(q,p)$ is the projected Schr\"{o}dinger representation of the Heisenberg group on $\mathbb{R}^{2n}$ with parameter $\hbar$ and the second line corresponds to the more familiar presentation of the first one. In detail we have 
\begin{equation*}
\bigl(\pi_\hbar (q,p) u \bigr)(x) := e^{i(qx + \hbar pq/2)} u(x+ \hbar p)
\end{equation*}
and $\hbar$ denotes the normalized Planck constant. Note that for $\hbar = 1$ this representation leads to the usual Kohn-Nirenberg calculus. Also, using representation theory we observe for $\hbar \neq \hbar'$ that $\pi_\hbar$ and $\pi_{\hbar'}$ are inequivalent representations on $L^2(\mathbb{R}^n)$ and moreover for $\hbar \neq 0$ the representation $\pi_{\hbar}$ is irreducible and unitary on $L^2(\mathbb{R}^n)$.

Once introduced such an additional artificial parameter $\hbar$ gives us now the opportunity to apply the theory of $\hbar$-pseudodifferential operators used in semiclassical analysis which is a formalism in an asymptotic regime $\hbar \ll 1$. As we want to combine this with the theory of generalized pseudodifferential operators we start writing $\hbar := \hbar(\varepsilon)$ with the restriction $\hbar(\varepsilon) \to 0$ in the case that $\varepsilon \to 0$. 

For technical reasons (e.g. construction of an approximative inverse) we also have to prevent other possible values of the function $\hbar(\varepsilon)$. As this is a first approach in this field we have chosen $\hbar(\varepsilon):= \varepsilon$ at this point.

So when given a family $(a_\varepsilon)_\varepsilon \in \mathcal{M}_{S^m}$ we define the corresponding semiclassical pseudodifferential operator of Colombeau type to be the linear operator $A_\psi: \mathcal{G}_{2,2} \to \mathcal{G}_{2,2}$ which acts on the level of representatives as 
\begin{equation}\label{modifiedOP}
(u_\varepsilon)_\varepsilon \mapsto (A_{\psi,\varepsilon}(x,D_x)u_\varepsilon)_\varepsilon \qquad \forall (u_\varepsilon)_\varepsilon \in \mathcal{M}_{H^{\infty}}.
\end{equation}
Here for fixed $\varepsilon \in (0,1]$ the right hand side corresponds to the standard quantization of the symbol $(a_\varepsilon)_\varepsilon$ with parameter $\hbar(\varepsilon) = \varepsilon$, that is
\begin{equation*}
\begin{split}
A_{\psi,\varepsilon}(x,D_x) u_\varepsilon(x) &:= (2\pi)^{-n} \int_{\mathbb{R}^n} (\mathcal{F} a_\varepsilon)(q,p) e^{-ipq/2} \pi_\hbar(q,p)u_\varepsilon (x) \,dq \,dp=\\
&= (2\pi \varepsilon)^{-n} \int e^{i (x-y) \xi/ \varepsilon} a_\varepsilon(x,\xi) u_\varepsilon (y) \,dy \,d \xi.
\end{split}
\end{equation*}
Then with $L_\varepsilon$ as in (\ref{eqn:Gov}) the corresponding $\psi$-pseudodifferential operator $L_\psi$ is essentially given by the oscillatory integral 
\begin{equation*}
L_{\psi,\varepsilon} u_\varepsilon(t,y) =  (2\pi \varepsilon)^{-n-1} \int e^{i t \tau/\varepsilon + iy \eta/\varepsilon} l_\varepsilon(y, \tau,\eta) \hat{u}_\varepsilon (\tau/\varepsilon,\eta/\varepsilon) \, d\tau \, d\eta
\end{equation*}
for fixed $\varepsilon \in (0,1]$ and $(l_\varepsilon)_\varepsilon$ is called the generalized symbol of $L_\psi$ and is given by
\begin{equation*}
l_{\varepsilon}(y,\tau,\xi,\zeta) :=  - \zeta^2 - \langle b_{\varepsilon}(y) \xi,\xi \rangle + c_\varepsilon(y)  \tau^2
\end{equation*}
where we have set $b_\varepsilon(y):= \text{diag} \left( b_{1,\varepsilon}(y),\ldots,b_{n-1,\varepsilon}(y) \right)$, $y:=(x,z)$ and $\eta:=(\xi,\zeta)$. Hence $(l_\varepsilon)_\varepsilon \in \MSym[2,0]{1,0}$ and the operator $L_\varepsilon$ can easily be reconstructed from $L_{\psi,\varepsilon}$, i.e. $L_\varepsilon =  \varepsilon^{-2} L_{\psi,\varepsilon}$, because of the homogeneity of the operators.

Given such a scaled generalized operator as above we carry out all transformations within algebras of generalized functions from now on. More explicitly we will study the action of the linear operator $L_\psi = {\mathop{\rm OP}}_{\psi} (l_\varepsilon)$ from $\mathcal{G}_{2,2}$ into itself in the following sense: on the level of representatives $L_\psi$ acts as in (\ref{modifiedOP}). This explains our governing equation 
\begin{equation*}
L_\psi U = F
\end{equation*}
for which we will now present a microlocal diagonalization method.
\section{\texorpdfstring{$\psi$}{psi}-Pseudodifferential Calculus}\label{sec:PsiPseudoCalculus}
In this section we introduce a general calculus for $\psi$-pseudodifferential operators which are certain semiclassical standard quantizations of generalized symbols as demonstrated in the previous section. For an introduction in semiclassical analysis we refer the reader to \cite{Martinez:02,EvZw,GuSt:10}. Since most of the techniques are similar to the classical theory of pseudodifferential operators we also want to give \cite{Hoermander:3,Taylor:81} as references. Moreover a detailed discussion on pseudodifferential operators with Colombeau generalized symbols can be found in \cite{GGO:05,Garetto:08}.

To continue, given a generalized symbol $(a_\varepsilon(x,\xi))_\varepsilon$ one can assign the semiclassical standard quantization which we denote by ${\mathop{\rm OP}}_\psi(a_\varepsilon):=a_\varepsilon(x,\varepsilon D_x)$. In this sense $\psi= \psi(\varepsilon)$ can be thought of as a scaled phase function on phase space. Therefore we let
\begin{equation*}
\psi_\varepsilon(x,\xi) := \langle x, \xi \rangle/ \varepsilon := x \xi /\varepsilon  \qquad \varepsilon \in (0,1].
\end{equation*}
throughout the paper. 

Moreover we choose the following convention for defining the Fourier transform $\mathcal{F}$ of a function $u \in L^2(\mathbb{R}^n)$:
\begin{align*}
\mathcal{F} u(\xi) &:= \hat{u}(\xi) :=  \int e^{-i x \xi} u(x) \,dx := \lim_{\sigma \to 0_+} \int e^{-i x \xi - \sigma \langle x \rangle} u(x) \,dx.
\end{align*}
Then the Fourier transform is an isomorphism on $L^2$ and the inverse Fourier transform of $u \in L^2(\mathbb{R}^n)$ is given by the following formula
\begin{align*}
\mathcal{F}^{-1} u(x) &:=  (2\pi)^{-n} \int e^{i x \xi} u(\xi) \,d\xi := \lim_{\sigma \to 0_+} (2\pi)^{-n} \int e^{i x \xi - \sigma \langle \xi \rangle} u(\xi) \,d\xi.
\end{align*}
More detailed pieces of information of the Fourier transform on $\mathcal{G}_{2,2}$ can be found in \cite{Anton:99}.
As already mentioned above we will focus on generalized pseudodifferential operators having the following phase-amplitude representation:
\begin{defn}\label{defn:psi-PsiDO}
Let $(a_\varepsilon)_\varepsilon \in \MSym[m,k]{\nu,l}(\mathbb{R}^n \times \mathbb{R}^{n})$. We define the corresponding linear operator $A_\psi : \mathcal{G}_{2,2} \to \mathcal{G}_{2,2}$ such that on the level of representatives we have 
\begin{equation*}
(u_\varepsilon)_\varepsilon \mapsto (A_{\psi,\varepsilon}(x,D_x)u_\varepsilon)_\varepsilon \qquad \forall (u_\varepsilon)_\varepsilon \in \mathcal{M}_{H^{\infty}}
\end{equation*}
and
\begin{equation}\label{psiPsiDO}
\begin{split}
A_{\psi,\varepsilon}(x,D_x) u_\varepsilon(x) 
& := (2\pi \varepsilon)^{-n} \int e^{i (x-y) \xi/ \varepsilon} a_\varepsilon(x,\xi) u_\varepsilon (y) \,dy \,d \xi =\\
& \phantom{:}= \varepsilon^{-n} \mathcal{F}^{-1}_{\xi \to x/\varepsilon} \Bigl( a_\varepsilon(x,\xi) \hat{u}_\varepsilon (\xi/\varepsilon) \Bigr)
\end{split}
\end{equation}
where the last integral is interpreted as an oscillatory integral. Then the map preserves moderateness and negligibility, respectively, so that  $A_\psi$ is well-defined on equivalence classes and continuous. We call $A_\psi$ the $\psi$-pseudodifferential operator with generalized symbol $(a_\varepsilon)_\varepsilon = (a_\varepsilon(x,\xi))_\varepsilon$. Also we will often write $A_\psi \in {\mathop{\rm OP}}_\psi \MSym[m,k]{\nu,l}$ when the generalized symbol of $A_\psi$ belongs to the class $\MSym[m,k]{\nu,l}$.
\end{defn}
\begin{rem} \hspace*{\fill} \newline
\indent (a) We remark that $(A_{\psi,\varepsilon}(x,D_x)u_\varepsilon)_\varepsilon = (A_{\varepsilon}(x, \varepsilon D_x)u_\varepsilon)_\varepsilon$ since formally we can always perform the rescaling. The problem of rescaling will be discussed in the next subsection.

(b) Note that Definition~\ref{defn:psi-PsiDO} agrees with the notion of the semiclassical standard quantization of a generalized symbol $(a_\varepsilon)_\varepsilon \in \MSym[m,k]{\nu,l} (\mathbb{R}^n \times \mathbb{R}^n)$ and can simply written as in (\ref{psiPsiDO}) using scaled Fourier transforms. Another commonly used quantization is the Weyl quantization $a^W(x,\varepsilon D)$ which has the nice property that real symbols correspond to self-adjoint Weyl quantizations. Since we will exclusively deal with the standard quantization of generalized symbols we decided to call them $\psi$-pseudodifferential operators.
\end{rem}
To prepare the factorization theorem of Section~\ref{sec:Fact} we will have to consider products of $\psi$-pseudodifferential operators. We therefore start with some general observations concerning the notion of asymptotic expansion of a generalized symbol in $\MSym[m,k]{\nu,l}$. 
\subsection{Asymptotic expansion of the first kind}
First we present the asymptotic expansion of the first kind which is inspired by the results given in \cite[Section 2.5]{Garetto:08} but also uses a relevant technical aspect from the semiclassical approach. Since this first notion of an asymptotic expansion turns out to have no suitable invariant character under rescaling we will further introduce the asymptotic expansion of second kind which behaves slightly different when performing the rescaling. We should emphasize here that this invariance property of the asymptotic expansion of the second kind will be essential from Section~\ref{sec:WFInfty} onwards, where we discuss microlocal regularity at infinite points. 

The definition of the asymptotic expansion of the first kind is now the following:
\begin{defn}\label{defn:AE1}
For $j \in \mathbb{N}$ let $\{m_j\}_j$ be a strictly decreasing sequence of real numbers with $m_j \searrow -\infty$ as $j \to \infty$, $m_0 = m$ and $\{l_j\}_j$ be a sequence of the form $l_j = \sigma j + l$ for some fixed $\sigma,l \in \mathbb{R}$ and $\sigma \geq 0$. Further let $\{ (a_{j,{\varepsilon}})_{\varepsilon} \}_{j}$ be a sequence with $( a_{j,{\varepsilon}} )_{\varepsilon} \in \MSym[m_j,k]{\nu,l_j}$ so that the following uniform growth type condition is satisfied
\begin{equation}\label{UniformCond}
\exists \eta \in (0,1] \ \forall j \in \mathbb{N} \ \forall \alpha, \beta \in \mathbb{N}^n \ \exists C >0: \quad q^{(m_j)}_{\alpha,\beta} (a_{j,\varepsilon}) \leq C \varepsilon^{k} \omega_\varepsilon^{\nu |\beta|+l_j} \quad \forall \varepsilon \in (0,\eta].
\end{equation}
We say that $\sum_{j=0}^{\infty} (\varepsilon^{j} a_{j,\varepsilon})_\varepsilon$ is the asymptotic expansion of the first kind for $(a_\varepsilon)_\varepsilon \in \MSym[m,k]{\nu,l}$, denoted by $(a_\varepsilon)_\varepsilon \sim \sum_{j} (\varepsilon^j a_{j,\varepsilon})_\varepsilon$, if and only if
\begin{equation*}
(a_\varepsilon - \sum_{j=0}^{N-1} \varepsilon^j a_{j,\varepsilon})_\varepsilon \in \MSym[m_N, N+k-1]{\nu,l} \qquad \forall N \geq 1.
\end{equation*}
Moreover $(a_{0,\varepsilon})_\varepsilon$ is said to be the principal symbol of $(a_\varepsilon)_\varepsilon$ if $a_{0,\varepsilon}$ is not identically vanishing for any fixed $\varepsilon \in (0,\eta]$.
\end{defn}
We are now in a position to state our first result.
\begin{lem}\label{lem:AE1}
Let $\{ l_j\}_j$, $\{ m_j \}_j$ and $\{(a_{j,\varepsilon})_\varepsilon\}_j$ be as in Definition~\ref{defn:AE1}. Then there exists a generalized symbol $(a_\varepsilon)_\varepsilon \in \MSym[m,k]{\nu,l}$ such that $(a_\varepsilon)_\varepsilon \sim \sum_{j} (\varepsilon^j a_{j,\varepsilon})_\varepsilon$ in $\MSym[m,k]{\nu,l}$. Moreover the asymptotic expansion of the first kind determines $(a_\varepsilon)_\varepsilon$ uniquely modulo $\mathcal{N}_{{\displaystyle\mathcal{S}}^{-\infty}}$. 
\end{lem}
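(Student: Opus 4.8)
The statement is a Borel-type summation (``Borel's lemma'') result adapted to the two-scale generalized symbol classes $\MSym[m,k]{\nu,l}$, so the plan is to mimic the classical construction of an asymptotic sum while tracking the extra $\omega_\varepsilon$-powers and the $\varepsilon^k$ growth type carefully. First I would fix an excision function $\chi \in \mathcal{C}^\infty(\mathbb{R}^n)$ with $\chi(\xi) = 0$ for $|\xi| \leq 1$ and $\chi(\xi) = 1$ for $|\xi| \geq 2$, and set
\begin{equation*}
a_\varepsilon(x,\xi) := \sum_{j=0}^\infty \varepsilon^j \, \chi(\xi/t_j)\, a_{j,\varepsilon}(x,\xi)
\end{equation*}
for a sequence $t_j \nearrow \infty$ to be chosen. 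The cutoff $\chi(\xi/t_j)$ kills the low-frequency part of each $a_{j,\varepsilon}$ so that, for every fixed $\xi$, only finitely many terms are nonzero and the sum is pointwise a finite sum defining a genuine $\mathcal{C}^\infty$ function; the point of the construction is to pick $t_j$ large enough that the infinite sum also converges in the relevant seminorms.

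\textbf{Choosing $t_j$.} The key estimate is that on $\{|\xi| \geq t_j\}$ one has $\langle\xi\rangle^{m-m_j} \leq C t_j^{m-m_j}$ when $m_j < m$ (using $m_j \searrow -\infty$), so that, writing the seminorm estimate \eqref{UniformCond} for $(a_{j,\varepsilon})_\varepsilon \in \MSym[m_j,k]{\nu,l_j}$ and multiplying out, the $j$-th term $\varepsilon^j \chi(\xi/t_j) a_{j,\varepsilon}$ contributes at most $C_{j,\alpha,\beta}\, \varepsilon^{j+k}\, \omega_\varepsilon^{\nu|\beta|+l_j}\, t_j^{-1}\langle\xi\rangle^{m-|\alpha|}$ to $q^{(m)}_{\alpha,\beta}$ after the $t_j$-power absorbs the gap $\langle\xi\rangle^{m-m_j}$ with one power to spare; here one must also distribute derivatives onto $\chi(\xi/t_j)$, each of which produces a harmless factor $t_j^{-1}$ supported in the annulus $t_j \leq |\xi| \leq 2t_j$. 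For each $N$ one then wants the tail $\sum_{j \geq N}$ to lie in $\MSym[m_N, N+k-1]{\nu,l}$; since $\omega_\varepsilon$ is a strongly positive slow scale net one has $\omega_\varepsilon^{l_j - l} = \omega_\varepsilon^{\sigma j} = \mathcal{O}(\varepsilon^{-\delta})$ for any $\delta>0$, and in particular $\varepsilon^j \omega_\varepsilon^{\sigma j} \leq \varepsilon^{j-1}$ for $\varepsilon$ small, so the series $\sum_j \varepsilon^j\omega_\varepsilon^{l_j-l}$ is dominated by $\sum_j \varepsilon^{j-1}$ which converges with room to spare. The standard diagonal argument then applies: choose $t_j$ so large that for all $|\alpha|,|\beta| \leq j$ the $j$-th term has $q^{(m_{\lceil j/2\rceil})}_{\alpha,\beta}$-seminorm bounded by $2^{-j}\varepsilon^{k-1}$ uniformly for $\varepsilon \in (0,\eta]$; summing the geometric tails gives $(a_\varepsilon)_\varepsilon \in \MSym[m,k]{\nu,l}$ and $(a_\varepsilon - \sum_{j<N}\varepsilon^j a_{j,\varepsilon})_\varepsilon \in \MSym[m_N,N+k-1]{\nu,l}$ because the first $N$ cutoffs differ from $1$ only on a compact $\xi$-set, which is absorbed into the order-$m_N$ bound, while the tail from $j \geq N$ gains the extra factor $\varepsilon^{N-1}$ as just explained.

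\textbf{Uniqueness.} If $(a_\varepsilon)_\varepsilon$ and $(a_\varepsilon')_\varepsilon$ both have the same asymptotic expansion of the first kind, then for every $N$ the difference $(a_\varepsilon - a_\varepsilon')_\varepsilon = (a_\varepsilon - \sum_{j<N}\varepsilon^j a_{j,\varepsilon})_\varepsilon - (a_\varepsilon' - \sum_{j<N}\varepsilon^j a_{j,\varepsilon})_\varepsilon$ lies in $\MSym[m_N, N+k-1]{\nu,l}$; since $m_N \searrow -\infty$ and $N+k-1 \to \infty$, for each $q$ and each pair $(\alpha,\beta)$ one picks $N$ with $N+k-1 \geq q+1$ and $m_N \leq -|\beta| $ say, whence $q^{(m')}_{\alpha,\beta}(a_\varepsilon - a'_\varepsilon) \leq C\varepsilon^{q+1}\omega_\varepsilon^{\nu|\beta|+l} \leq C'\varepsilon^q$ for $\varepsilon$ small (slow scale), for every order $m'$; this is exactly the defining condition of $\mathcal{N}_{\mathcal{S}^{-\infty}}$, so $(a_\varepsilon)_\varepsilon \equiv (a'_\varepsilon)_\varepsilon$ modulo $\mathcal{N}_{\mathcal{S}^{-\infty}}$.

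\textbf{Main obstacle.} The routine part is the classical Borel diagonal trick; the part requiring genuine care is bookkeeping the interaction of the two scales — making sure that the $\omega_\varepsilon^{l_j}$-factors, which \emph{grow} with $j$, never overwhelm the $\varepsilon^j$ decay, and that when derivatives fall on the cutoffs $\chi(\xi/t_j)$ the resulting $\beta$-dependence stays compatible with the prescribed log-type exponent $\nu|\beta|+l$ (not $\nu|\beta|+l_j$) in the target class $\MSym[m,k]{\nu,l}$. This works precisely because $(\omega_\varepsilon)_\varepsilon$ is a slow scale net: $\varepsilon^j\omega_\varepsilon^{\sigma j}$ is not merely bounded but $\mathcal{O}(\varepsilon^{j-1})$, which is what converts the formal expansion into a convergent one and simultaneously feeds the extra $\varepsilon$-power needed to land in $\MSym[m_N,N+k-1]{\nu,l}$ at each truncation order.
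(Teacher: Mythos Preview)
Your construction has a genuine gap: the $\xi$-cutoff alone cannot produce the required \emph{growth type} $N+k-1$ in the remainder estimate. Look at the piece
\[
\sum_{j<N}\varepsilon^j\bigl(\chi(\xi/t_j)-1\bigr)a_{j,\varepsilon}(x,\xi)
\]
that appears when you subtract the partial sum. You correctly observe that $\chi(\xi/t_j)-1$ has compact $\xi$-support, and this does push the \emph{order} down to $m_N$ (or to anything you like). But the $\varepsilon$-behaviour of this piece is inherited from $a_{j,\varepsilon}$ itself, which is only $\mathcal{O}(\varepsilon^k\omega_\varepsilon^{\nu|\beta|+l_j})$; the $j=0$ term in particular contributes $\mathcal{O}(\varepsilon^k)$, not $\mathcal{O}(\varepsilon^{N+k-1})$. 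No choice of $t_j$ (which is $\varepsilon$-independent in your construction) can repair this, because the cutoff $\chi(\xi/t_j)-1$ carries no $\varepsilon$-decay whatsoever. Concretely, take $k=l=\sigma=\nu=0$ and $N=2$: then $(\chi(\xi/t_0)-1)a_{0,\varepsilon}$ is in $\MSym[m_2,0]{0,0}$ but not in $\MSym[m_2,1]{0,0}$ unless $a_{0,\varepsilon}$ happens to vanish for small $\xi$.

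This is exactly why the paper's proof couples the $\xi$-cutoff $\phi(\lambda_j\xi)$ with a \emph{second} cutoff $\chi(\mu_j(\varepsilon\omega_\varepsilon^{\sigma})^{-1})$ in the $\varepsilon$-variable (semiclassical Borel): one writes $b_{j,\varepsilon}=\chi(\mu_j(\varepsilon\omega_\varepsilon^{\sigma})^{-1})a_{j,\varepsilon}+(1-\chi(\mu_j(\varepsilon\omega_\varepsilon^{\sigma})^{-1}))\phi(\lambda_j\xi)a_{j,\varepsilon}$ and sums $\varepsilon^j b_{j,\varepsilon}$. The point is that $b_{j,\varepsilon}-a_{j,\varepsilon}=(1-\chi(\mu_j(\varepsilon\omega_\varepsilon^{\sigma})^{-1}))(\phi(\lambda_j\xi)-1)a_{j,\varepsilon}$ is nonzero only where $\mu_j\le 2\varepsilon\omega_\varepsilon^{\sigma}$, and on that set one can trade $(\varepsilon\omega_\varepsilon^{\sigma})^j$ for $(\varepsilon\omega_\varepsilon^{\sigma})^N$ at the cost of a constant $\mu_j^{j-N}$; this is the missing mechanism that upgrades the growth type from $k$ to $N+k-1$. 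Your slow-scale observation $\varepsilon^j\omega_\varepsilon^{\sigma j}=\mathcal{O}(\varepsilon^{j-1})$ handles the tail $j\ge N$ correctly, but it does nothing for $j<N$, and that is where the $\varepsilon$-cutoff is indispensable.
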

Before presenting the proof of Lemma~\ref{lem:AE1} we need a technical auxiliary result.
\begin{lem}\label{lem:ChooseMu}
Let $\{ (a_{j,\varepsilon})_\varepsilon \}_j$ be as in Definition~\ref{defn:AE1} and $\chi$ a smooth function such that $\chi \equiv 0$ on $[0,1]$, $\chi \equiv 1$ on $[2,\infty)$ and $0 \leq \chi \leq 1$. Then there exists a zero sequence $\{\mu_j\}_j$ such that for some fixed $\eta \in (0,1]$ we have for every $j \in \mathbb{N}$ and $\alpha, \beta \in \mathbb{N}^n$ with $|\alpha + \beta|\leq j$: 
\begin{equation*}
|\chi(\mu_j (\varepsilon \omega_\varepsilon^{\sigma})^{-1}) q^{(m_j)}_{\alpha,\beta}(a_{j,\varepsilon})| \leq 2^{-j-1} \varepsilon^{k} \omega_\varepsilon^{\nu |\beta|+l_j} (\varepsilon \omega_\varepsilon^{\sigma})^{-1} \qquad \forall \varepsilon \in(0,\eta],
\end{equation*}
with the same $\sigma$ as in Definition~\ref{defn:AE1}.
\end{lem}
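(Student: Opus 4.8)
\textbf{Proof plan for Lemma~\ref{lem:ChooseMu}.}
The plan is to construct the zero sequence $\{\mu_j\}_j$ inductively, one term at a time, exploiting the fact that $\chi$ is supported away from the origin so that the cutoff $\chi(\mu_j (\varepsilon \omega_\varepsilon^\sigma)^{-1})$ is nonzero only when $\varepsilon \omega_\varepsilon^\sigma \geq \mu_j$, i.e.\ precisely where the small parameter $\varepsilon \omega_\varepsilon^\sigma$ is bounded below. First I would fix the $\eta \in (0,1]$ coming from the uniform growth condition \eqref{UniformCond}, so that for all $j$ and all $\alpha,\beta$ we have $q^{(m_j)}_{\alpha,\beta}(a_{j,\varepsilon}) \leq C_{j,\alpha,\beta}\, \varepsilon^k \omega_\varepsilon^{\nu|\beta|+l_j}$ on $(0,\eta]$. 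For a fixed $j$ there are only finitely many pairs $(\alpha,\beta)$ with $|\alpha+\beta|\leq j$, so set $\widetilde C_j := \max_{|\alpha+\beta|\leq j} C_{j,\alpha,\beta}$; the inequality to be proven will follow if I arrange, for all such $(\alpha,\beta)$ and all $\varepsilon \in (0,\eta]$,
\begin{equation*}
\chi(\mu_j (\varepsilon\omega_\varepsilon^\sigma)^{-1})\, \widetilde C_j\, \varepsilon^k \omega_\varepsilon^{\nu|\beta|+l_j} \leq 2^{-j-1}\, \varepsilon^k \omega_\varepsilon^{\nu|\beta|+l_j}\,(\varepsilon\omega_\varepsilon^\sigma)^{-1},
\end{equation*}
which after cancelling the common positive factor $\varepsilon^k \omega_\varepsilon^{\nu|\beta|+l_j}$ reduces to the single scalar condition $\chi(\mu_j(\varepsilon\omega_\varepsilon^\sigma)^{-1})\, \widetilde C_j\, (\varepsilon\omega_\varepsilon^\sigma) \leq 2^{-j-1}$ for all $\varepsilon \in (0,\eta]$. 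Thus the dependence on $\alpha,\beta$ has disappeared and only a one-variable estimate in $s := \varepsilon\omega_\varepsilon^\sigma$ remains.

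Next I would observe that $\chi(\mu_j/s)$ vanishes for $s \leq \mu_j$ (since then $\mu_j/s \geq 1$... wait, I need $\chi\equiv 0$ on $[0,1]$, so $\chi(\mu_j/s)=0$ when $\mu_j/s \leq 1$, i.e.\ $s \geq \mu_j$) — let me re-read: $\chi\equiv 0$ on $[0,1]$ means $\chi(\mu_j/s)=0$ when $\mu_j/s \in [0,1]$, i.e.\ $s \geq \mu_j$. So the cutoff is supported on $s \leq 2\mu_j$ roughly; more precisely $\chi(\mu_j/s) \neq 0$ only if $\mu_j/s > 1$, i.e.\ $s < \mu_j$. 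Hence wherever the cutoff is active we have $s = \varepsilon\omega_\varepsilon^\sigma < \mu_j$, and since $0 \leq \chi \leq 1$ we get
\begin{equation*}
\chi(\mu_j(\varepsilon\omega_\varepsilon^\sigma)^{-1})\, \widetilde C_j\, (\varepsilon\omega_\varepsilon^\sigma) \leq \widetilde C_j\, \mu_j.
\end{equation*}
Therefore it suffices to choose $\mu_j := \min\{ 2^{-j-1}/\widetilde C_j,\ 1/j\}$ (with the convention $1/0 := 1$ for $j=0$, or simply start the sequence at $\mu_0 \leq 1$), which is positive, forces $\widetilde C_j \mu_j \leq 2^{-j-1}$, and satisfies $\mu_j \to 0$ as $j \to \infty$. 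This $\{\mu_j\}_j$ is the required zero sequence, and the displayed estimate of the lemma holds on $(0,\eta]$ with the same $\sigma$.

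The only genuine subtlety — and the step I would flag as the crux — is the sign/support analysis of the cutoff: one must be careful that $\chi(\mu_j(\varepsilon\omega_\varepsilon^\sigma)^{-1})$ is nonzero exactly on the region where $\varepsilon\omega_\varepsilon^\sigma$ is \emph{small} (bounded above by $\mu_j$), so that on the support of the cutoff the factor $(\varepsilon\omega_\varepsilon^\sigma)$ on the left can be absorbed by $\mu_j$ and the factor $(\varepsilon\omega_\varepsilon^\sigma)^{-1}$ on the right is harmless (it is $\geq \mu_j^{-1}$ there). Everything else is bookkeeping: finitely many multi-indices per level reduces matters to a single constant $\widetilde C_j$, the common symbol factor $\varepsilon^k\omega_\varepsilon^{\nu|\beta|+l_j}$ cancels, and the diagonal choice $\mu_j = \min\{2^{-j-1}/\widetilde C_j, 1/(j+1)\}$ simultaneously delivers the bound and the convergence $\mu_j\to 0$. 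Note we never needed $\varepsilon\omega_\varepsilon^\sigma \leq \eta$ or any refinement of $\eta$; the $\eta$ is inherited unchanged from \eqref{UniformCond}.
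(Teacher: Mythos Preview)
Your proof is correct and follows essentially the same route as the paper: use the uniform constants $C_{j,\alpha,\beta}$ from \eqref{UniformCond}, observe that $\chi(\mu_j(\varepsilon\omega_\varepsilon^\sigma)^{-1})\neq 0$ forces $\varepsilon\omega_\varepsilon^\sigma < \mu_j$, bound $\chi\cdot(\varepsilon\omega_\varepsilon^\sigma)\leq \mu_j$ there, and then pick $\mu_j$ small enough that $\widetilde C_j\,\mu_j \leq 2^{-j-1}$ while $\mu_j\to 0$. The paper phrases the key step as the identity $\chi = \tfrac{\mu_j}{\varepsilon\omega_\varepsilon^\sigma}\cdot\chi\cdot\tfrac{\varepsilon\omega_\varepsilon^\sigma}{\mu_j}\leq \tfrac{\mu_j}{\varepsilon\omega_\varepsilon^\sigma}$ and keeps the individual constants $C^{(1)}_{j,\alpha,\beta}$ rather than passing to their maximum $\widetilde C_j$, but this is the same argument; your explicit choice $\mu_j=\min\{2^{-j-1}/\widetilde C_j,\,1/(j{+}1)\}$ is a clean way to record that $\mu_j\to 0$.
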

\begin{proof} Using the uniformity condition (\ref{UniformCond}) of the sequence $\{ (a_{j,{\varepsilon}})_{\varepsilon} \}_{j}$ we obtain that for some $\eta \in (0,1]$ and $\forall j \in \mathbb{N}$, $\forall \alpha,\beta \in \mathbb{N}^n$, $\exists C^{(1)}_{j,\alpha,\beta} >0$ and $\forall \varepsilon \in (0,\eta]$ we have
\begin{equation*}
\begin{split}
\varepsilon^{-k} \omega_\varepsilon^{- \nu |\beta|- l_j}|\chi(\mu_j (\varepsilon \omega_\varepsilon^{\sigma})^{-1}) q^{(m_j)}_{\alpha,\beta}(a_{j,\varepsilon}) | & \leq C^{(1)}_{j,\alpha,\beta} \hspace{1pt}  \chi(\mu_j (\varepsilon \omega_\varepsilon^{\sigma})^{-1}) =\\
& = C^{(1)}_{j,\alpha,\beta}  \frac{\mu_j}{\varepsilon \omega_\varepsilon^{\sigma}} \chi(\mu_j (\varepsilon \omega_\varepsilon^{\sigma})^{-1}) \frac{\varepsilon \omega_\varepsilon^{\sigma}}{\mu_j} \leq C^{(1)}_{j,\alpha,\beta}  \frac{\mu_j}{\varepsilon \omega_\varepsilon^{\sigma}}  
\end{split}
\end{equation*}
since $\chi(\mu_j (\varepsilon \omega_\varepsilon^{\sigma})^{-1}) = 0$ for $\mu_j \leq \varepsilon \omega_\varepsilon^{\sigma}$ and $0 \leq \chi \leq 1$. We now choose the sequence $\{\mu_j\}_j$ in the following way
\begin{equation}\label{ChooseMu}
\forall j \in \mathbb{N} \ \forall \alpha, \beta \in \mathbb{N}^n \text{ with } |\alpha + \beta| \leq j \text{ we have:} \quad C^{(1)}_{j,\alpha,\beta} \mu_j \leq 2^{-j-1}
\end{equation}
and the proof is complete.
\end{proof}
\begin{proof}[Proof of Lemma~\ref{lem:AE1}]
In the following proof we basically combine techniques from the theory of non-linear generalized functions, \cite[Theorem 2.2]{Garetto:08}, and Borel's Theorem from the semiclassical approach as in \cite[Theorem 4.11]{EvZw} or alternatively \cite[Proposition 2.3.2]{Martinez:02}. To avoid overload calculations, we may assume without loss of generality that $k = 0$ in Lemma~\ref{lem:AE1} and therefore also in the uniformity condition (\ref{UniformCond}).

We let $\phi \in \mathcal C^{\infty}(\mathbb{R}^n)$, $0 \leq \phi \leq 1$ such that $\phi(\xi) = 0$ for $|\xi| \leq 1$ and $\phi(\xi) = 1$ for $|\xi| \geq 2$. Further let $\chi$ and $\{\mu_j \}_j$ be as in Lemma~\ref{lem:ChooseMu}. We introduce functions
\begin{align*}
\begin{split}
b^{(1)}_{j,\varepsilon}(x,\xi) &:= \chi(\mu_j(\varepsilon \omega_\varepsilon^{\sigma})^{-1})a_{j,\varepsilon}(x,\xi)\\
b^{(2)}_{j,\varepsilon}(x,\xi) &:= \left(1-\chi(\mu_j(\varepsilon \omega_\varepsilon^{\sigma})^{-1})\right)\phi(\lambda_j \xi) a_{j,\varepsilon}(x,\xi)
\end{split}
\end{align*}
where $\{ \lambda_j \}_j$ is a strictly decreasing zero sequence such that $\forall j: \lambda_j \leq 1$ and will be specified later. For fixed $\varepsilon \in (0,1]$ we also define 
\begin{equation*}
a_{\varepsilon}(x,\xi):= \sum_{j \geq 0} \varepsilon^j b_{j,\varepsilon}^{(1)}(x,\xi) +\sum_{j \geq 0} \varepsilon^j b_{j,\varepsilon}^{(2)}(x,\xi).
\end{equation*}
By construction the first sum consists of at most finitely many nonzero terms (depending on $\varepsilon \in (0,1]$ fixed), since $\mu_j \to 0$ as $j \to \infty$. Moreover the second sum is locally finite and we conclude that $(a_\varepsilon)_\varepsilon \in \mathcal{E} (\mathbb{R}^n \times \mathbb{R}^n)^{(0,1]}$.

\textbf{Step 1:} In the first part of the proof we show that $(a_\varepsilon)_\varepsilon \in \MSym[m,0]{\nu,l}$. Since $(a_{j,\varepsilon})_\varepsilon \in \MSym[m_j,0]{\nu,l_j}$ satisfy the uniform growth type condition with $k=0$ we obtain that $\exists \eta \in (0,1]$ such that $\forall j \in \mathbb{N}$, $\forall \alpha, \beta \in \mathbb{N}^n$ we have
\begin{equation}\label{Est1}
|\partial^{\alpha}_\xi \partial^{\beta}_x b^{(1)}_{j,\varepsilon}(x,\xi)| \leq C^{(1)}_{j,\alpha,\beta} \hspace{1pt} \omega_\varepsilon^{\nu |\beta| + l_j} \langle \xi \rangle^{m_j-|\alpha|} \qquad \forall \varepsilon \in(0,\eta]
\end{equation}
and the constant $C^{(1)}_{j,\alpha,\beta} > 0$ is the same as in the proof of Lemma~\ref{lem:ChooseMu}.

Concerning $b^{(2)}_{j,\varepsilon}$ we observe that $\text{supp} (\partial^{\alpha} \phi)(\lambda_j\xi) \subset \{ \xi: \lambda_j^{-1} \leq |\xi| \leq 2 \lambda_j^{-1} \}$ for every $|\alpha| \geq 1$. Accordingly, if $|\alpha| \geq 1$, we can assume $\lambda_j \leq 2/ |\xi| \leq 4 \langle \xi \rangle^{-1}$ on $\text{supp} (\partial^{\alpha} \phi)(\lambda_j\xi)$ and $b^{(2)}_{j,\varepsilon}$ can be estimated as follows: $\exists \eta \in (0,1]$ independent of $\alpha, \beta$ and $j$ such that
\begin{equation}\label{Est2}
\begin{split}
|\partial^{\alpha}_\xi \partial^{\beta}_x b^{(2)}_{j,\varepsilon}(x,\xi)| & \leq \langle \xi \rangle^{m_j - |\alpha|} \sum_{\gamma \leq \alpha}  c(\chi,\phi,\gamma) 4^{|\alpha - \gamma|} q^{(m_j)}_{\gamma,\beta}(a_{j,\varepsilon}) \leq\\
& \leq C^{(2)}_{j,\alpha,\beta} \omega_\varepsilon^{\nu |\beta|+l_j} \langle \xi \rangle^{m_j - |\alpha|}
\end{split}
\end{equation}
for all $\varepsilon \in (0,\eta]$. At this point we choose the sequence $\{ \lambda_j \}_j$ strictly decreasing and so that
\begin{equation}\label{ChooseLambda}
\forall j \in \mathbb{N} \ \forall \alpha, \beta \in \mathbb{N}^n \text{ with } |\alpha+ \beta|\leq j \text{ we have:} \quad C^{(2)}_{j,\alpha,\beta} \lambda_j \leq 2^{-j-1}.
\end{equation}
In order to show that $(a_\varepsilon)_\varepsilon \in \MSym[m,0]{\nu,l}$ we note that
\begin{equation}\label{Decompose1}
\forall \alpha, \beta \in \mathbb{N}^n \ \exists j_0 \in \mathbb{N}: \quad |\alpha + \beta| \leq j_0, \ m_{j_0}+ 1\leq m 
\end{equation}
and decompose $(a_\varepsilon)_\varepsilon$ in an appropriate way, that now is
\begin{equation}\label{Decompose2}
a_\varepsilon(x,\xi)= \sum_{j\leq j_0 -1} \varepsilon^j b_{j,\varepsilon}(x,\xi) + \sum_{j \geq j_0} \varepsilon^j b_{j,\varepsilon}(x,\xi) := f_\varepsilon(x,\xi)+ s_\varepsilon(x,\xi) 
\end{equation} 
where we have set $b_{j,\varepsilon} (x,\xi):=b^{(1)}_{j,\varepsilon}(x,\xi)+ b^{(2)}_{j,\varepsilon}(x,\xi)$. Then $\exists \eta \in (0,1]$ such that $\forall \alpha, \beta \in \mathbb{N}^n$
\begin{align*}
|\partial^{\alpha}_\xi \partial^{\beta}_x f_{\varepsilon}(x,\xi)| & =  |\sum_{j \leq j_0 - 1} \varepsilon^j \partial^{\alpha}_\xi \partial^{\beta}_x  \bigl\{ b^{(1)}_{j,\varepsilon}(x,\xi) + b^{(2)}_{j,\varepsilon}(x,\xi) \bigr\} | \leq \\
&\leq \omega_\varepsilon^{\nu |\beta| +l} \langle \xi \rangle^{m-|\alpha|} \! \! \! \sum_{j \leq j_0 - 1} (\varepsilon \omega_\varepsilon^{\sigma})^j \bigl(C^{(1)}_{j,\alpha,\beta} + C^{(2)}_{j,\alpha,\beta}\bigr) \leq  C_{j_0} \omega_\varepsilon^{\nu |\beta|+l} \langle \xi \rangle^{m-|\alpha|}
\end{align*}
$\forall \varepsilon \in (0,\eta]$ by (\ref{Est1}) and (\ref{Est2}). We next estimate the remainder $s_\varepsilon$. For this purpose we apply (\ref{ChooseMu}) from Lemma~\ref{lem:ChooseMu} and use (\ref{ChooseLambda}) to obtain
\begin{equation*}
\begin{split}
\omega_\varepsilon^{- \nu |\beta|-l} |\partial^{\alpha}_\xi \partial^{\beta}_x s_{\varepsilon}(x,\xi)| & \leq \sum_{j \geq j_0} (\varepsilon \omega_\varepsilon^{\sigma})^{j} \langle \xi \rangle^{m_j - |\alpha|} 2^{-j-1} \left\{ \mu_j^{-1} + \lambda_j^{-1} \right\} \leq \\
& \leq (\varepsilon \omega_\varepsilon^{\sigma})^{j_0 -1} \langle \xi \rangle^{m_{j_0}+1-|\alpha|} \sum_{j \geq j_0}  2^{-j} (\varepsilon \omega_\varepsilon^{\sigma})^{j-j_0} \leq\\
& \leq (\varepsilon \omega_\varepsilon^{\sigma})^{j_0 -1} \langle \xi \rangle^{m_{j_0}+1 - |\alpha|} \leq \langle  \xi \rangle^{m - |\alpha|} \ \qquad  \forall \varepsilon \in (0,\eta]
\end{split}
\end{equation*}
for some $\eta \in (0,1]$ independent of the order of differentiation. Note that in the second inequality we used that $\mu_j^{-1} \leq (\varepsilon \omega_\varepsilon^{\sigma})^{-1}$ on $\text{supp} (b_{j,\varepsilon}^{(1)})$ and $\lambda_j^{-1} \leq \langle \xi \rangle$ on $\text{supp} (b_{j,\varepsilon}^{(2)})$. And so, $(a_\varepsilon)_\varepsilon \in \MSym[m,0]{\nu,l}$ as required. 

\textbf{Step 2:} We complete the proof by showing that for every $N \geq 1$ we have
\begin{equation}\label{Step2}
\qquad (a_\varepsilon - \! \sum_{j \leq N-1} \varepsilon^j a_{j,\varepsilon} )_\varepsilon \in \MSym[m_N,N]{\nu,l_N}(\mathbb{R}^n \times \mathbb{R}^n).
\end{equation}
Therefore we let $N \geq 1$ be a fixed natural number and write
\begin{equation*}
\begin{split}
a_\varepsilon(x,\xi) - \! \sum_{j \leq N-1} \varepsilon^j a_{j,\varepsilon}(x,\xi) &= \sum_{j \leq N-1} \varepsilon^j \bigl( 1-\chi(\mu_j(\varepsilon \omega_\varepsilon^{\sigma})^{-1}) \bigr) \bigl( \phi (\lambda_j \xi) -1 \bigr) a_{j,\varepsilon}(x,\xi) + \\
&+ \sum_{j \geq N} \varepsilon^j b_{j,\varepsilon}(x,\xi) =: g_\varepsilon(x,\xi)+ t_\varepsilon(x,\xi).
\end{split}
\end{equation*}
We first calculate the contribution of $g_\varepsilon$. For this note that $|\xi| \leq 2 \lambda_j^{-1}$ on the support of $(\phi (\lambda_j \xi) -1)$ and we obtain that $\exists \eta \in (0,1]$ such that $\forall j \leq N-1$ we have
\begin{equation*}
\begin{split}
|\partial^{\alpha}_\xi \partial^{\beta}_x \bigl( \phi (\lambda_j \xi) -1 \bigr)& a_{j,\varepsilon}(x,\xi) | \leq \sum_{\gamma \leq \alpha}  c(\phi,\gamma) \lambda_j^{|\gamma|} q^{(m_j)}_{\alpha-\gamma,\beta}(a_{j,\varepsilon}) \langle \xi \rangle^{m_j - |\alpha-\gamma|} \leq\\
& \leq \langle \xi \rangle^{m_N - |\alpha|} \sum_{\gamma \leq \alpha}  c(\phi,\gamma) \lambda_j^{|\gamma|} \langle 2 \lambda_{N-1}^{-1} \rangle^{m_j-m_N+|\gamma|} q^{(m_j)}_{\alpha-\gamma,\beta}(a_{j,\varepsilon}) \leq\\
& \leq C_{N,\alpha,\beta} \omega_\varepsilon^{\nu |\beta| +l_j} \langle \xi \rangle^{m_N - |\alpha|} \ \qquad  \forall \varepsilon \in (0,\eta].
\end{split}
\end{equation*}
Now taking into account that $1-\chi(\mu_j (\varepsilon \omega_\varepsilon^{\sigma})^{-1}) \neq 0$ only if $\mu_j \leq 2 \varepsilon \omega_\varepsilon^{\sigma}$ we compute
\begin{equation*}
\begin{split}
\omega_\varepsilon^{- \nu |\beta|-l} |\partial^{\alpha}_\xi & \partial^{\beta}_x g_\varepsilon(x,\xi) | \leq C_{N,\alpha,\beta} \langle \xi \rangle^{m_N - |\alpha|} \! \! \sum_{j \leq N-1} \hspace{1pt} (\varepsilon \omega_\varepsilon^{\sigma})^{j} \left( 1-\chi(\mu_j (\varepsilon \omega_\varepsilon^{\sigma})^{-1}) \right) =\\
& \! \! = \ C_{N,\alpha,\beta} (\varepsilon \omega_\varepsilon^{\sigma})^{N} \langle \xi \rangle^{m_N - |\alpha|} \! \! \sum_{j \leq N-1} \mu_j^{j-N} \Bigl( \frac{\mu_j}{\varepsilon \omega_\varepsilon^{\sigma}} \Bigr)^{\! N - j} \! \! \left( 1-\chi(\mu_j (\varepsilon \omega_\varepsilon^{\sigma})^{-1}) \right)  \leq\\
& \! \! \leq \ \widetilde{C}_{N,\alpha,\beta} (\varepsilon \omega_\varepsilon^{\sigma})^{N} \langle \xi \rangle^{m_N - |\alpha|} \ \qquad  \forall \varepsilon \in (0,\eta].
\end{split}
\end{equation*}
We proceed with the estimation of $t_\varepsilon$ in which we again use a decomposition as in (\ref{Decompose1}) and (\ref{Decompose2}). Therefore we let $N \in \mathbb{N}$, $N \geq 1$ be fixed. Then
\begin{equation*}
\forall \alpha, \beta \in \mathbb{N}^n \ \exists j_0 \in \mathbb{N}: \quad |\alpha + \beta| \leq j_0, \ m_{j_0}+ 1\leq m_N \text{ and } j_0-1 \geq N
\end{equation*}
and we write
\begin{equation}\label{Decompose:t}
t_{\varepsilon}(x,\xi) = \sum_{j = N}^{j_0-1} \varepsilon^j b_{j,\varepsilon}(x,\xi) + s_\varepsilon(x,\xi)
\end{equation}
with $s_\varepsilon$ as in (\ref{Decompose2}). As already shown in the estimate for $s_\varepsilon$ we also have
\begin{equation*}
|\partial^{\alpha}_\xi \partial^{\beta}_x s_{\varepsilon}(x,\xi)| \leq \omega_\varepsilon^{\nu |\beta|+l} (\varepsilon \omega_\varepsilon^{\sigma})^{j_0 -1} \langle \xi \rangle^{m_{j_0} +1- |\alpha|} .
\end{equation*}
Furthermore we obtain for the first term on the right hand side of (\ref{Decompose:t})
\begin{align*}
|\partial^{\alpha}_\xi \partial^{\beta}_x \sum_{j = N}^{j_0-1} \varepsilon^j b_{j,\varepsilon}(x,\xi)| & \leq \omega_\varepsilon^{\nu |\beta|+l} \langle \xi \rangle^{m_N-|\alpha|} \sum_{j=N}^{j_0 - 1} (\varepsilon \omega_\varepsilon^{\sigma})^{j} \bigl(C^{(1)}_{j,\alpha,\beta} + C^{(2)}_{j,\alpha,\beta}\bigr) \leq\\
&\leq  C_{j_0} (\varepsilon \omega_\varepsilon^{\sigma})^{N} \omega_\varepsilon^{\nu |\beta|+l} \langle \xi \rangle^{m_N-|\alpha|}.
\end{align*}
Putting this together we get (\ref{Step2}) which completes the proof.
\end{proof}
\begin{rem}
Before we proceed we make some observations concerning the construction of the generalized symbol $(a_\varepsilon)_\varepsilon$ in Lemma~\ref{lem:AE1} which was given by the expression
\begin{equation*}
a_{\varepsilon}(x,\xi) = \sum_{j \geq 0} \varepsilon^j \chi \Bigl(\frac{\mu_j}{\varepsilon \omega_\varepsilon^{\sigma}}\Bigr) a_{j,\varepsilon}(x,\xi) + \sum_{j \geq 0} \varepsilon^j \Bigl\{1-\chi\Bigl(\frac{\mu_j}{\varepsilon \omega_\varepsilon^{\sigma}} \Bigr)\Bigr\}\phi(\lambda_j \xi) a_{j,\varepsilon}(x,\xi).
\end{equation*}
Here the first sum is inspired from the semiclassical approach whereas the second sum is also used in the usual Colombeau framework. Then it is may worth to mention the following facts:

(a) In order to construct the square root of the homogeneous operator $L_\psi$ we also need to discuss polyhomogeneous symbols. Then the second sum in the definition of $a_\varepsilon$ also makes sense for homogeneous symbols $a_{j,\varepsilon}$, but the first sum would induce a finite number of singularities (at $\xi = 0$) for fixed $\varepsilon \in (0,1]$. To overcome this problem we will introduce generalized symbols that are smoothed off at the zero section so that we can require a symbol to have an asymptotic expansion in homogeneous terms if we allow some additional error terms in the asymptotic expansion. For the precise description of these so-called polyhomogeneous generalized symbols we refer to Definition~\ref{defn:PhgSymb}.

(b) Another important property concerns the effect of rescaling. This will be used in Section~\ref{sec:WFInfty} where we test microlocal regularity of generalized functions using $L^2$-boundedness results of zeroth-order $\psi$-pseudodifferential operators. To exemplify this let $\{(a_{j,\varepsilon})_\varepsilon\}_j$ be a sequence with $(a_{j,\varepsilon})_\varepsilon \in \MSym[-j,k]{\nu,l_j}$ and $l_j$ as in Definition~\ref{defn:AE1}, $j \geq 0$. By Lemma~\ref{lem:AE1} there exists $(a_\varepsilon)_\varepsilon \in \MSym[0,k]{\nu,l}$ such that $\forall N \geq 1$ we have
\begin{equation}\label{AE:Order0}
(r_\varepsilon)_\varepsilon := (a_\varepsilon - \sum_{j \leq N-1} \! \varepsilon^j a_{j,\varepsilon} )_\varepsilon \in \MSym[-N,N+k-1]{\nu,l}.
\end{equation}
Now using the rescaling $(x,\xi) \mapsto (r_\varepsilon(x,\varepsilon \xi))_\varepsilon$ we obtain for every $N \geq 1$
\begin{equation*}
(x,\xi) \mapsto (r_\varepsilon(x,\varepsilon \xi) )_\varepsilon \in \MSym[-N,k-1]{\nu,l}
\end{equation*}
concluding that the asymptotic expansion of the first kind keeps the order of the error operator invariant under rescaling but not its growth type. According to the $L^2$-boundedness it seems suitable to have an asymptotic expansion which preserves the growth type of the error operator in (\ref{AE:Order0}) under rescaling rather than the order of the same. 

Therefore note that $(r_\varepsilon)_\varepsilon$ in (\ref{AE:Order0}) can also be considered as a symbol of class $\mathcal{N}_{{\displaystyle\mathcal{S}}^0}$. Then the rescaling does not influence the negligibility of the symbol $(r_\varepsilon)_\varepsilon$ since
\begin{equation*}
(x,\xi) \mapsto (r_\varepsilon(x,\varepsilon \xi) )_\varepsilon \in \mathcal{N}_{{\displaystyle\mathcal{S}}^0}.
\end{equation*}
Using this we now introduce the asymptotic expansion of the second kind to achieve an invariant growth type under rescaling at least when studying zeroth-order operators. In detail we will study asymptotic expansions of the form $(\sum_{j\geq0} \varepsilon^j a_{j,\varepsilon})_\varepsilon$ which are similar those in Definition~\ref{defn:AE1} but where the sequence $\{ (a_{j,\varepsilon})_\varepsilon \}_j$ is uniformly bounded in the generalized symbol class of order $m=m_0$.
\end{rem}
\subsection{Asymptotic expansion of the second kind}\label{subsec:AE2} In order to eliminate the rescaling problem arising in (b) above we now present the asymptotic expansion of the second kind.
\begin{defn}\label{defn:AE2}
Let $m, k \in \mathbb{R}$ and $\{l_j\}_j$ be a sequence of real numbers of the form $l_j = \sigma j + l$ for some fixed $\sigma,l \in \mathbb{R}$ and $\sigma \geq 0$, $j \in \mathbb{N}$. Further we let $\{ (a_{j,{\varepsilon}})_{\varepsilon} \}_{j}$ be a sequence with $\bigl( a_{j,{\varepsilon}} \bigr)_{\varepsilon} \in \MSym[m,k]{\nu,l_j}$ satisfying the following uniformity condition
\begin{equation*}
\quad \exists \eta \in (0,1] \ \forall j \in \mathbb{N} \ \forall \alpha, \beta \in \mathbb{N}^n \ \exists C >0: \quad  q^{(m)}_{\alpha,\beta} (a_{j,\varepsilon}) \leq C \varepsilon^k \omega_\varepsilon^{\nu |\beta| +l_j} \quad \forall \varepsilon \in (0,\eta].
\end{equation*}
We say that the $\sum_{j=0}^{\infty} (\varepsilon^j a_{j,\varepsilon})_\varepsilon$ is the asymptotic expansion of the second kind for the symbol $(a_\varepsilon)_\varepsilon \in \MSym[m,k]{\nu,l}$, denoted by $(a_\varepsilon)_\varepsilon \approx \sum_{j} (\varepsilon^j a_{j,\varepsilon})_\varepsilon$, if
\begin{equation*}
(a_\varepsilon - \sum_{j=0}^{N-1} \varepsilon^j a_{j,\varepsilon})_\varepsilon \in \MSym[m,N]{\nu,l_N} \qquad \forall N \geq 1.
\end{equation*}
Again we call $(a_{0,\varepsilon})_\varepsilon$ the principal symbol of $(a_\varepsilon)_\varepsilon$ if $a_{0,\varepsilon}$ is not identically zero for every fixed $\varepsilon >0$ sufficiently small.
\end{defn}
Comparing the Defintions~\ref{defn:AE1} and \ref{defn:AE2} we observe that any generalized symbol $(a_\varepsilon)_\varepsilon$ from Lemma~\ref{lem:AE1} also admits an asymptotic expansion of the second kind. 

Furthermore we note that our notions of an asymptotic expansion are different from the one used in the semiclassical approach. A crucial difference is that in the semiclassical setting one uses the H\"{o}rmander symbol class $S^m_{0,0}$ as the underlying classical space and not $S^m := S^m_{1,0}$ as in our case, cf. \cite{Hoermander:66}. As above, the semiclassical asymptotic expansion provides an invariant character of zeroth-order operators under rescaling. For more details on the semiclassical asymptotic expansion we refer to \cite{DiSj:99,Martinez:02,GuSt:10}.
\subsection{Composition of \texorpdfstring{$\psi$}{psi}-pseudodifferential operators}
In this subsection we establish the composition law of two $\psi$-pseudodifferential operators utilizing the method of stationary phase in order to determine the asymptotic behavior in the occurring oscillatory integrals. Because of the specific construction of the $\psi$-pseudodifferential operators it suffices to study a corollary of the stationary phase formula. 

In detail, this simplified formula says that given a function $a \in \mathcal{C}^{\infty}_{\rm c} (\mathbb{R}^{2n})$ then for every $N \geq 1$ one has
\begin{equation*}
\left( \frac{\lambda}{2 \pi} \right)^n \int e^{-i \lambda xy } a(x,y) \, d x \, d y= \sum_{|\alpha| \leq N-1} \frac{1}{ \alpha! \lambda^{|\alpha|}} \left(  D_x^{\alpha} \partial_y^{\alpha} a \right) (0,0) + S_N (a;\lambda)
\end{equation*}
where the parameter $\lambda$ is considered in the limit $\lambda \to \infty$. Moreover the remainder term $S_N (a;\lambda)$ can be estimated as follows
\begin{equation*}
|S_{N}(a; \lambda)| \leq \frac{C}{N! \lambda^N} \sum_{|\alpha+\beta| \leq 2n+1} \lVert \partial_x^{\alpha} \partial_y^{\beta} ( \partial_x \cdot \partial_y )^N a \rVert_{L^1(\mathbb{R}^{2n})}
\end{equation*}
for $\lambda \geq 1$ and the constant $C > 0$ is independent of $\lambda$. In the application we will work with smooth functions $a$ which are not compactly supported but are well-behaved at infinity and also depend on the parameter $\lambda$. For a more general discussion on the method of stationary phase we refer the reader to \cite[Section 2.6]{Martinez:02} or alternatively to \cite[Section 2]{GiSj:94}.

Using the above formula we can show the following result:
\begin{prop}\label{prop:Product}
Let $A_\psi(x,D_x)$ and $B_\psi(x,D_x)$ be two $\psi$-pseudodifferential operators with generalized symbols $(a_\varepsilon)_\varepsilon \in \MSym[m_1,k_1]{\nu,l_1}$ and $(b_\varepsilon)_\varepsilon \in \MSym[m_2,k_2]{\nu,l_2}$ respectively. Then the product $A_\psi B_\psi$ is well-defined and maps $\mathcal{G}_{2,2}$ into itself. Moreover $A_\psi B_\psi$ is a $\psi$-pseudodifferential operator with generalized symbol $(a_\varepsilon \# b_\varepsilon)_\varepsilon$ in $\MSym[m_1 + m_2,k_1 + k_2]{\nu,l_1+l_2}$ and we have the following representation
\begin{equation}\label{AE:Product}
(a_{\varepsilon} \# b_{\varepsilon})_\varepsilon \sim \sum_{|\alpha| \ge 0} \left( \frac{{\varepsilon}^{|\alpha|}}{\alpha!} D^{\alpha}_{\xi} a_{\varepsilon}(x,\xi) \partial^{\alpha}_{x} b_{\varepsilon}(x,\xi) \right)_\varepsilon.
\end{equation}
\end{prop}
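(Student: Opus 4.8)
\textbf{Proof strategy for Proposition~\ref{prop:Product}.}
The plan is to reduce the composition to the simplified stationary phase formula recalled above. First I would write out $A_\psi B_\psi u_\varepsilon$ on the level of representatives using the scaled Fourier transform representation in~(\ref{psiPsiDO}). Inserting the definition twice and using the Fourier inversion formula, the Schwartz kernel of $A_{\psi,\varepsilon} B_{\psi,\varepsilon}$ becomes an oscillatory integral whose amplitude is $a_\varepsilon(x,\xi) b_\varepsilon(y,\eta)$ with phase $(x-y)\xi/\varepsilon + (y-z)\eta/\varepsilon$. The composed symbol $c_\varepsilon(x,\xi) := (a_\varepsilon \# b_\varepsilon)(x,\xi)$ is then, after the substitution that localizes the $y$-integration, of the form
\begin{equation*}
c_\varepsilon(x,\xi) = (2\pi\varepsilon)^{-n} \int e^{-i w \theta /\varepsilon} a_\varepsilon(x,\xi+\theta) b_\varepsilon(x+w,\xi) \, dw \, d\theta,
\end{equation*}
i.e.\ an integral of exactly the type $(\lambda/2\pi)^n \int e^{-i\lambda w\theta} g(w,\theta)\,dw\,d\theta$ with $\lambda = 1/\varepsilon \to \infty$ as $\varepsilon \to 0$, where $g(w,\theta) = g_\varepsilon(w,\theta) = a_\varepsilon(x,\xi+\theta)b_\varepsilon(x+w,\xi)$ depends on the parameters $(x,\xi)$ and on $\varepsilon$.

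Next I would apply the stationary phase expansion to order $N$. The critical point is $(w,\theta)=(0,0)$, and since $(D_w^\alpha \partial_\theta^\alpha g_\varepsilon)(0,0) = D_\xi^\alpha a_\varepsilon(x,\xi) \, \partial_x^\alpha b_\varepsilon(x,\xi)$, the leading terms reproduce precisely~(\ref{AE:Product}). The point that needs care is twofold. First, $g_\varepsilon$ is \emph{not} compactly supported, so one has to justify the oscillatory integral and the stationary phase estimate for amplitudes that are merely symbols; this is standard and is handled by a cutoff/partition argument splitting the integral into a region near the diagonal (where stationary phase applies verbatim, the missing compact support being irrelevant after one more integration by parts in the tails) and a region away from the diagonal (where nonstationary phase, i.e.\ repeated integration by parts in $e^{-iw\theta/\varepsilon}$, yields a contribution that is $\mathcal{O}(\varepsilon^q)$ for every $q$, hence negligible, and moreover gains arbitrarily many powers of $\langle\xi\rangle^{-1}$). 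Second, and this is the real obstacle, one must track how the two asymptotic scales $\varepsilon$ and $\omega_\varepsilon$ enter the remainder estimate: the $L^1$-bound on $(\partial_w\cdot\partial_\theta)^N g_\varepsilon$ involves $q^{(m_1)}_{\cdot,\cdot}(a_\varepsilon) \, q^{(m_2)}_{\cdot,\cdot}(b_\varepsilon)$, which by~(\ref{TwoScaleSymb}) carries a factor $\varepsilon^{k_1+k_2}\omega_\varepsilon^{\nu|\beta|+l_1+l_2}$ together with some fixed power of $\omega_\varepsilon$ coming from the $N$-fold differentiation; one then uses that $(\omega_\varepsilon)_\varepsilon$ is a slow scale net, so the extra $\omega_\varepsilon$-powers are absorbed into the gain $\varepsilon^N$ from the $\lambda^{-N}$ in $S_N$. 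This is exactly why the error term lands in $\MSym[m_1+m_2,N+k_1+k_2-1]{\nu,l_1+l_2}$ (the $-1$ coming from~(\ref{RegSymb})-type slow-scale absorption), which is the content of the relation $\sim$ in Definition~\ref{defn:AE1}.

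Finally I would assemble the pieces: after verifying that each partial sum $\sum_{|\alpha|\le N-1}\frac{\varepsilon^{|\alpha|}}{\alpha!}D_\xi^\alpha a_\varepsilon \,\partial_x^\alpha b_\varepsilon$ lies in $\MSym[m_1+m_2,k_1+k_2]{\nu,l_1+l_2}$ with the partial sums forming an admissible sequence $\{(a_{j,\varepsilon})_\varepsilon\}_j$ in the sense of Definition~\ref{defn:AE1} (the $j$-th term has order $m_1+m_2$ — so here one invokes the version of the expansion where orders do not drop, or equivalently uses that $\langle\xi\rangle$-decay is not needed and Lemma~\ref{lem:AE1} still applies once reindexed), I conclude that $(c_\varepsilon)_\varepsilon$ exists as an element of $\MSym[m_1+m_2,k_1+k_2]{\nu,l_1+l_2}$ with the stated asymptotic expansion, and by Lemma~\ref{lem:AE1} it is determined uniquely modulo $\mathcal{N}_{{\displaystyle\mathcal{S}}^{-\infty}}$. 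That $A_\psi B_\psi = {\mathop{\rm OP}}_\psi(c_\varepsilon)$ maps $\mathcal{G}_{2,2}$ into itself and is well-defined on equivalence classes then follows from Definition~\ref{defn:psi-PsiDO} applied to $(c_\varepsilon)_\varepsilon$, together with the fact that the composition of the representative maps preserves $\mathcal{M}_{H^\infty}$ and $\mathcal{N}_{H^\infty}$.
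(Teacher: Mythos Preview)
Your overall strategy is the one the paper uses: express the composed symbol as an oscillatory integral, localize with cutoffs, apply the simplified stationary-phase formula on the piece near the critical point, and kill the off-diagonal pieces by integration by parts. The paper adds a $\sigma$-regularization to make the oscillatory integrals honest and uses a threefold splitting via $1-\chi(|\xi-\eta|/|\xi|)$, $\chi(|\xi-\eta|/|\xi|)(1-\chi_1(x,y))$, and $\chi(|\xi-\eta|/|\xi|)\chi_1(x,y)$; the first two are negligible and only the third enters stationary phase.

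There is, however, a real gap in your remainder estimate. You take the large parameter to be $\lambda=1/\varepsilon$. This produces the $\varepsilon^N$ gain, but the $L^1$-norm of $(\partial_w\cdot\partial_\theta)^N g_\varepsilon$ over the localized region $|\theta|\le|\xi|/2$ picks up a volume factor $|\xi|^n$, so you do not directly obtain the required order $m_1+m_2-N$ for the remainder. The paper avoids this by writing $\xi=\lambda\nu$ with $\lambda=|\xi|$ and rescaling $\zeta=(\eta-\xi)/\lambda$, so that the effective stationary-phase parameter becomes $\lambda/\varepsilon=|\xi|/\varepsilon$ and the $\zeta$-integration is over the fixed ball $|\zeta|\le\tfrac12$; then the factor $(\varepsilon/\lambda)^N$ in $S_N$ delivers $\varepsilon^N$ and $\langle\xi\rangle^{-N}$ simultaneously. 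This is precisely what places the remainder in $\MSym[m_1+m_2-N,\,N+k_1+k_2-1]{\nu,l_1+l_2}$, as Definition~\ref{defn:AE1} requires for the relation $\sim$.

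Your parenthetical that ``the $j$-th term has order $m_1+m_2$'' is the symptom of this: $D_\xi^\alpha a_\varepsilon\,\partial_x^\alpha b_\varepsilon$ has order $m_1+m_2-|\alpha|$, so the orders \emph{do} drop and~(\ref{AE:Product}) is genuinely an expansion of the first kind with $m_j=m_1+m_2-j$ and $l_j=\nu j+l_1+l_2$. There is no need for a variant of Lemma~\ref{lem:AE1} with constant orders; once the remainder is shown to lie in $\MSym[m_1+m_2-N,\,N+k_1+k_2-1]{\nu,l_1+l_2}$ the lemma applies as stated.
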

Note that in Proposition~\ref{prop:Product} the generalized symbol $(a_\varepsilon \# b_\varepsilon)_\varepsilon$ is given by its asymptotic expansion of the first kind. Hence it can also be reinterpreted as a symbol having an asymptotic expansion of the second kind.

The following proof is an adaption of \cite[Theorem 2.6.5]{Martinez:02} and \cite[Theorem 9.13]{EvZw}.
\begin{proof}
Without loss of generality we assume that $(a_\varepsilon)_\varepsilon$ and $(b_\varepsilon)_\varepsilon$ are of growth type $0$, i.e. $k_1 = k_2 = 0$, as the proof for more general growth type assumptions requires only slight changes in the argumentation.
 
Let $u \in \mathcal{G}_{2,2}(\mathbb{R}^{n})$ having $(u_\varepsilon)_\varepsilon$ as representative and let $\varepsilon \in (0,1]$ be fixed and arbitrary. Then $A_{\psi} B_{\psi} u$ makes sense as an oscillatory integral and we write
\begin{align}\label{ABproduct}
A_{\psi,\varepsilon} B_{\psi,\varepsilon} u_\varepsilon(x) & = \frac{1}{(2 \pi \varepsilon)^{n}} \lim_{\sigma\to 0_{+}} \int e^{i(x-y) \eta/\varepsilon - \sigma \langle y \rangle - \sigma \langle \eta \rangle} a_\varepsilon(x,\eta) (B_{\psi,\varepsilon} u_\varepsilon)(y) \, dy \, d \eta =\nonumber \\
& = \frac{1}{(2 \pi \varepsilon)^{n}} \lim_{\substack{\sigma\to 0_{+} \\ \tau \to 0_{+}}} \int e^{ix \xi/\varepsilon - \tau \langle \xi \rangle} c_{\sigma,\varepsilon}(x,\xi) \hat{u}_\varepsilon(\xi/\varepsilon) \, d \xi
\end{align}
where we have set
\begin{equation*}
c_{\sigma,\varepsilon} (x,\xi) = (2 \pi \varepsilon)^{-n} \int e^{i(x-y)(\eta - \xi) /\varepsilon - \sigma \langle y \rangle - \sigma \langle \eta \rangle} a_\varepsilon(x,\eta) b_{\varepsilon}(y,\xi) \, dy \, d \eta.
\end{equation*}
In what follows we show that $c_{\sigma,\varepsilon}(x,\xi)$ corresponds to a generalized symbol in $\MSym[m_1+m_2,0]{\nu,l_1+l_2}$ uniformly for all $\sigma >0$. Here the relevant asymptotic behavior of $c_{\sigma,\varepsilon}$ as $\varepsilon \to 0$ is described by using the method of stationary phase. Finally, by passing to the limit $\sigma \to 0_{+}$, we obtain by Lebesgue's dominated convergence theorem that the limit of $(c_{\sigma,\varepsilon})_\varepsilon$ exists in $\MSym[m_1+m_2,0]{\nu,l_1+l_2}$ and admits a representation as stated in (\ref{AE:Product}). 

For that purpose we first split the integral representation of $c_{\sigma,\varepsilon}(x,\xi)$ into three parts. 
Therefore let $\chi \in \mathcal{C}^{\infty}_{\rm c} (\mathbb{R})$ such that $\chi(s) = 1$ for $|s| \leq 1/4$ and $\chi(s) = 0$ for $|s| \geq 1/2$ and set $\chi_1 (x,y) = \chi(|x-y|)$ for $x,y \in \mathbb{R}^n$. For $|\xi| \geq 1$ we write 
\begin{align}\label{Splitting}
c_{\sigma,\varepsilon} (x,\xi) =: d_{\sigma,\varepsilon} (x,\xi) + e_{\sigma,\varepsilon} (x,\xi) + f_{\sigma,\varepsilon} (x,\xi).
\end{align}
Here $d_{\sigma,\varepsilon}, e_{\sigma,\varepsilon}$ and $f_{\sigma,\varepsilon}$ are of the same form as $c_{\sigma,\varepsilon}$ but their integrands are multiplied by an additional factor $1-\chi(|\xi-\eta| / |\xi|)$, $\chi(|\xi-\eta| / |\xi|)(1-\chi_1(x,y))$ and $\chi(|\xi-\eta| / |\xi|)\chi_1(x,y)$, respectively. Also we note that $f_{\sigma,\varepsilon}$ represents the part of $c_{\sigma,\varepsilon}$ that will determine its asymptotic behavior as we meet the critical points of the phase. 

In the following we give the proof under the assumption that $|\xi| \geq 1$. In the case that $|\xi| \leq 1$ a similar decomposition as in (\ref{Splitting}) will lead to the desired result. More precisely, the corresponding summands are of the same form as $c_{\sigma,\varepsilon}$ but their integrands are multiplied with $1-\chi_1(\xi,\eta)$, $\chi_1(\xi,\eta)(1-\chi_1(x,y))$ and $\chi_1(\xi,\eta)\chi_1(x,y)$, respectively, which is similar as in the proof of \cite[Theorem 2.6.5]{Martinez:02}.

As a first step we check that the integrand of $d_{\sigma,\varepsilon}$ is in $L^1(\mathbb{R}^n_y \times \mathbb{R}^n_\eta)$ uniformly in $\sigma > 0$. Therefore we introduce the operator
\begin{equation*}
L_\varepsilon := \left( 1+ \frac{|\xi - \eta|^2}{\varepsilon^2} + \frac{|x-y|^2}{\varepsilon^2} \right)^{-1} \left( 1 + \frac{\xi - \eta}{\varepsilon}D_y + \frac{x-y}{\varepsilon} D_\eta \right)
\end{equation*}
satisfying the following equality
\begin{equation*}
L_\varepsilon e^{i(x-y)(\eta - \xi) /\varepsilon} = e^{i(x-y)(\eta - \xi) /\varepsilon}.
\end{equation*}
Using integration by parts we obtain 
\begin{equation*}
\begin{split}
&(2 \pi \varepsilon)^{n} d_{\sigma,\varepsilon} (x,\xi) =\\
&= \int e^{i(x-y)(\eta - \xi) /\varepsilon} ( {}^t \hspace{-1pt} L_\varepsilon )^k \Bigl\{ (1-\chi(|\xi-\eta|/|\xi|)) e^{- \sigma \langle y \rangle - \sigma \langle \eta \rangle} a_\varepsilon(x,\eta) b_{\varepsilon}(y,\xi) \Bigr\} \, dy \, d \eta.
\end{split}
\end{equation*}
Further, for every $k \in \mathbb{N}$ with $k > n+1/2 $ we get
\begin{align*}
(2 \pi \varepsilon)^{n} & d_{\sigma,\varepsilon}(x,\xi) = \int_{|\xi - \eta| \geq |\xi|/4} \mathcal{O} \left( \frac{\omega_\varepsilon^{k \nu+l_1+l_2} \langle \eta \rangle^{m_1} \langle \xi \rangle^{m_2}}{(1 + |\xi - \eta|/\varepsilon + |x-y|/\varepsilon)^k}\right) \, dy \, d \eta =\\
& = \ \int_{|\xi - \eta| \geq |\xi|/4} \mathcal{O} \left( \varepsilon^n \omega_\varepsilon^{k \nu+l_1+l_2} \langle \eta \rangle^{m_1} \langle \xi \rangle^{m_2} \Bigl\{1 + \frac{|\xi - \eta|}{\varepsilon} \Bigr\}^{n+1/2-k} \right) \, d \eta =\\
& = \ \int_{|\xi - \eta| \geq |\xi|/4} \mathcal{O} \left( \varepsilon^{k-1/2} \omega_\varepsilon^{k \nu +l_1+l_2} \frac{ \langle \eta \rangle^{m_1} \langle \xi \rangle^{m_2}}{ ( 1+|\xi| + |\eta| )^{k-n-1/2}}\right) \, d \eta \qquad \text{as } \varepsilon \to 0.
\end{align*}
Then, by a straightforward calculation one shows that for every $k \geq |m_1| + 2n+1$ we have
\begin{equation*}
(2 \pi \varepsilon)^{n} d_{\sigma,\varepsilon}(x,\xi) = \mathcal{O} \left( \varepsilon^{k-1/2} \omega_\varepsilon^{k \nu+l_1+l_2} \langle \xi \rangle^{m_1+m_2-k+2n+1+|m_1|} \right)
\end{equation*}
for $\varepsilon$ sufficiently small. Since $(\omega_\varepsilon)_\varepsilon$ is a slow scale net we conclude that
\begin{equation*}
(2 \pi \varepsilon)^{n} d_{\sigma,\varepsilon}(x,\xi) = \mathcal{O} \left( \varepsilon^{N} \omega_\varepsilon^{l_1+l_2} \langle \xi \rangle^{m_1+m_2-N} \right) \qquad \forall \ N \geq 0
\end{equation*}
uniformly for every $(x,\xi) \in \mathbb{R}^{2n}$ with $|\xi| \geq 1$ and $\sigma > 0$ as $\varepsilon \to 0$.

We next estimate $e_{\sigma,\varepsilon}(x,\xi)$. Using the coordinate change $z=y-x$ and integration by parts we can write
\begin{equation*}
(2 \pi \varepsilon)^{n} e_{\sigma,\varepsilon} (x,\xi) =\int_{\substack{|z|\geq 1/4 \\ |\xi-\eta| \leq |\xi|/2}} e^{-iz(\eta - \xi) /\varepsilon} ( {}^t \hspace{-1pt} L_{1,\varepsilon} )^k r_{\sigma,\varepsilon}(x,z,\xi,\eta) \, dz \, d \eta
\end{equation*}
where we have set $L_{1,\varepsilon} := -\varepsilon z/|z|^2 D_{\eta}$ and
\begin{equation*}
r_{\sigma,\varepsilon}(x,z,\xi,\eta) := \chi(|\xi-\eta|/|\xi|)(1-\chi(|z|)) e^{- \sigma \langle x+z \rangle - \sigma \langle \eta \rangle} a_\varepsilon(x,\eta) b_{\varepsilon}(x+z,\xi).
\end{equation*}
Note that the operator $L_{1,\varepsilon}$ satisfies
\begin{equation*}
{}^t L_{1,\varepsilon} = -L_{1,\varepsilon} \quad \text{and} \quad L_{1,\varepsilon} e^{-iz(\eta - \xi) /\varepsilon} = e^{-iz(\eta - \xi) /\varepsilon}.
\end{equation*}
Also, the integrand of $e_{\sigma,\varepsilon} (x,\xi)$ is integrable with respect to $z$ if we take $k > n$. In order to check the integrability with respect to the $\eta$-variable we partition the domain of integration into the two regions
\begin{align*}
\Omega_1 &:= \{ \eta \in \mathbb{R}^{n} : |\xi-\eta| \le 1/4 \} &  \Omega_2 &:= \{ \eta \in \mathbb{R}^{n} : |\xi-\eta| \ge 1/4\}
\end{align*}
and write 
\begin{equation*}
(2 \pi \varepsilon)^{n} e_{\sigma,\varepsilon} (x,\xi) = \sum_{j=1}^{2} \int_{\Omega_j} e^{-iz(\eta - \xi) /\varepsilon} ( {}^t \hspace{-1pt} L_{1,\varepsilon} )^k r_{\sigma,\varepsilon}(x,z,\xi,\eta) \, dz \, d \eta =: \sum_{j=1}^{2} I_j .
\end{equation*}
Concerning $I_1$ we use the coordinate transformation $\zeta = \eta- \xi$ and by Peetre's inequality $\langle \xi +\zeta \rangle^s \leq 2^{|s|/2} \langle \xi \rangle^s \langle \zeta \rangle^{|s|}$ for every $s \in \mathbb{R}$ we obtain
\begin{equation*}
I_1 = \int_{\substack{|z|\geq 1/4 \\ |\zeta| \leq 1/4}} \mathcal{O} (\varepsilon^k \omega_\varepsilon^{l_1+l_2} |z|^{-k} \langle \xi \rangle^{m_2} \langle \xi + \zeta \rangle^{m_1-k}) \, dz \, d \zeta = \mathcal{O} (\varepsilon^k \omega_\varepsilon^{l_1+l_2} \langle \xi \rangle^{m_1+m_2-k} ).
\end{equation*}
for every $k > n$ as $\varepsilon \to 0$. For the estimation of the second part $I_2$ we introduce
\begin{equation*}
L_{2,\varepsilon} := \frac{\varepsilon^2}{|\xi-\eta|^2} \frac{\xi-\eta}{\varepsilon} D_{z}
\end{equation*}
and let $j \in \mathbb{N}$. Again using integration by parts we obtain for $|\xi| \geq 1$
\begin{equation*}
\begin{split}
I_2 &= \int e^{-iz(\eta - \xi) /\varepsilon} ( {}^t \hspace{-1pt} L_{2,\varepsilon} )^j ( {}^t \hspace{-1pt} L_{1,\varepsilon} )^k r_{\sigma,\varepsilon}(x,z,\xi,\eta) \, dz \, d \eta=\\
&= \int_{\substack{ |z| \geq 1/4 \\ 1/4 \leq |\xi-\eta| \leq | \xi |/2}} \mathcal{O} (\varepsilon^{j + k} \omega_\varepsilon^{j \nu +l_1+l_2} |z|^{-k} \langle \xi \rangle^{m_2} \langle \eta \rangle^{m_1 - k} |\xi-\eta|^{-j})  \, dz \, d \eta
\end{split}
\end{equation*}
and is integrable with respect to $z$ if we take $k > n$ for sufficiently small $\varepsilon$. Then using the coordinate change $\eta=\xi+\zeta$ and applying Peetre's inequality gives
\begin{equation*}
\begin{split}
I_2 & = \int_{1/4 \leq |\zeta| \leq |\xi|/2} \mathcal{O} (\varepsilon^{j + k} \omega_\varepsilon^{j \nu +l_1+l_2}  \langle \xi \rangle^{m_1+m_2-k} \langle \zeta \rangle^{|m_1|+k -j} ) \, d \zeta =\\
& = \mathcal{O} (\varepsilon^{j + k} \omega_\varepsilon^{j \nu +l_1+l_2}  \langle \xi \rangle^{m_1+m_2-k} )
\end{split}
\end{equation*}
for every $j,k \in \mathbb{N}$ with $k> n$ and $j > |m_1| + k +n$ as $\varepsilon \to 0$. Combining the above yields to
\begin{align*}
(2 \pi \varepsilon)^{n} e_{\sigma,\varepsilon}(x,\xi) &= \mathcal{O} \left( \varepsilon^{N} \omega_\varepsilon^{l_1+l_2}\langle \xi \rangle^{m_1+m_2-N} \right) \qquad \forall N \geq 0
\end{align*}
uniformly for all $\sigma > 0$ and $(x,\xi) \in \mathbb{R}^{2n}$, $|\xi| \geq 1$ as $\varepsilon \to 0$. By the same arguments as above one can also show that for every $\alpha,\beta \in \mathbb{N}^n$, $|\xi| \geq 1$ and $N \in \mathbb{N}$ we have
\begin{equation*}
|\partial_{\xi}^{\alpha} \partial_x^{\beta} d_{\sigma,\varepsilon}(x,\xi)| + |\partial_{\xi}^{\alpha} \partial_x^{\beta} e_{\sigma,\varepsilon}(x,\xi)| = \mathcal{O} \left( \varepsilon^N \omega_\varepsilon^{\nu |\beta| +l_1+l_2}\langle \xi \rangle^{m_1+m_2-|\alpha| - N} \right)
\end{equation*}
uniformly for $\sigma > 0$ as $\varepsilon \to 0$. So we deduce that $(d_{\sigma,\varepsilon})_\varepsilon$ and $(e_{\sigma,\varepsilon})_\varepsilon$ are contained in $\MSym[m_1+m_2-N,N]{\nu,l_1+l_2}$ for every $N \in \mathbb{N}$ uniformly with respect to $\sigma>0$ and $(x,\xi) \in \mathbb{R}^{2n}$, $|\xi| \geq 1$.

So it remains to study the term $(2 \pi \varepsilon)^{n} f_{\sigma,\varepsilon}(x,\xi)$ which was given by 
\begin{equation*}
\int e^{i(x-y)(\eta - \xi) /\varepsilon} \chi(|\xi-\eta|/|\xi|)\chi_1(x,y)  e^{- \sigma (\langle y \rangle + \langle \eta \rangle)} a_\varepsilon(x,\eta) b_{\varepsilon}(y,\xi) \, dy \, d \eta.
\end{equation*}
Now writing $\xi = \lambda \nu$ where $\lambda = |\xi|$ and using the coordinate transformations
\begin{equation*}
\begin{cases}
  \zeta = (\eta - \xi) / \lambda \\
  z=y-x     									  
\end{cases}
\end{equation*}
we get
\begin{equation*}
f_{\sigma,\varepsilon} (x,\xi) =\frac{\lambda^n}{(2 \pi \varepsilon)^n} \int_{\substack{|z| \leq 1/2 \\ |\zeta| \leq 1/2}} e^{-i \lambda z \zeta/\varepsilon} 
t_{\sigma,\varepsilon}(z,\zeta; x,\xi) \, dz \, d\zeta.
\end{equation*}
where $t_{\sigma,\varepsilon}$ is a function in $\mathcal{C}_{\rm c}^{\infty} (\mathbb{R}^n_z \times \mathbb{R}^n_\zeta)$ containing $(x,\xi)$ as parameters and is given by
\begin{equation*}
t_{\sigma,\varepsilon}(z,\zeta; x,\xi) = \chi(|\zeta|) \chi(|z|) e^{- \sigma \langle x+z \rangle - \sigma \langle \lambda(\nu + \zeta) \rangle} a_\varepsilon(x,\lambda(\nu+ \zeta)) b_{\varepsilon}(x+z,\lambda \nu).
\end{equation*}
Then the method of stationary phase gives for every $N \in \mathbb{N},\ N \geq 1$
\begin{equation*}
f_{\sigma,\varepsilon}(x,\xi) = \sum_{|\alpha| \leq N-1} \frac{\varepsilon^{|\alpha|}}{\alpha! \lambda^{|\alpha|}} D^{\alpha}_\zeta \partial_z^{\alpha} t_{\sigma,\varepsilon}(z,\zeta;x,\xi)  \Big\vert _{\substack{z=0 \\ \zeta = 0}} + S_N(t_{\sigma,\varepsilon};\lambda/\varepsilon)
\end{equation*}
and the remainder can be estimated as follows
\begin{multline*}
|S_N(t_{\sigma,\varepsilon};\lambda / \varepsilon)| \leq \frac{C\varepsilon^N}{N! \lambda^N} \sum_{|\alpha + \beta| \leq 2n+1} \lVert \partial_\zeta^{\alpha} \partial_z^{\beta} (\partial_\zeta \cdot \partial_z)^N t_{\sigma,\varepsilon} \rVert_{L^1(\mathbb{R}_z^n \times \mathbb{R}_\zeta^n)} =\\ 
= \mathcal{O} \left( \varepsilon^{N} \omega_\varepsilon^{(2n+1+N)\nu+l_1+l_2} \lambda^{m_1 +m_2 -N} \right) = \mathcal{O} \left( \varepsilon^{N-1} \omega_\varepsilon^{l_1+l_2} \langle \xi \rangle^{m_1 +m_2 -N} \right)
\end{multline*}
for $|\xi| \geq 1$ and $\varepsilon$ sufficiently small. Finally, since similar estimates hold true for the derivatives and we conclude that $(S_N(t_{\sigma,\varepsilon}))_\varepsilon$ is contained in $\MSym[m_1+m_2-N,N-1]{\nu,l_1+l_2}$ for every $N \in \mathbb{N}$ uniformly with respect to $\sigma>0$ for every $(x,\xi) \in \mathbb{R}^{2n}$ with $|\xi| \geq 1$. 

As already mentioned in the beginning of the proof we now pass to the limit $\sigma \to 0_{+}$. 
Then by Lebesgue's dominated convergence theorem we have
\begin{equation*}
c_{\sigma,\varepsilon}(x,\xi) \to c_\varepsilon(x,\xi) \qquad \text{as } \sigma \to 0_+
\end{equation*}
so that equation (\ref{ABproduct}) reads
\begin{align*}
A_{\psi,\varepsilon} B_{\psi,\varepsilon} u_\varepsilon(x) &= (2 \pi \varepsilon)^{-n} \lim_{\tau \to 0_{+}} \int e^{ix \xi/\varepsilon - \tau \langle \xi \rangle} c_{\varepsilon}(x,\xi) \hat{u}_\varepsilon(\xi/\varepsilon) \, d \xi =\\
& = (2 \pi \varepsilon)^{-n} \int e^{ix \xi/\varepsilon} c_{\varepsilon}(x,\xi) \hat{u}_\varepsilon(\xi/\varepsilon) \, d \xi
\end{align*}
where the last equation holds in the sense of oscillatory integrals. Moreover the generalized symbol $(c_\varepsilon)_\varepsilon$ is equal to $(a_\varepsilon \# b_{\varepsilon})_\varepsilon$ as described in (\ref{AE:Product}) and in particular $(c_\varepsilon)_\varepsilon \in \MSym[m_1+m_2,0]{\nu,l_1+l_2}$. This finishes the proof.
\end{proof}
\section{A factorization procedure for \texorpdfstring{$L_\psi$}{Lpsi}}\label{sec:Fact}
Concerning products of $\psi$-pseudodifferential operators that approximate $L_\psi$ we make similar considerations as in the smooth setting which can be found in \cite[Appendix II]{Kumano-go:81} and \cite[Chapter 23]{Hoermander:3}. First we write for $L_\psi = \varepsilon^2 L_\varepsilon$ with $L_\varepsilon$ from the beginning 
\begin{equation}\label{GovOp}
\begin{split}
L_{\psi}(y,D_t,D_x,D_z) &=: (\partial_z^2 + A(y,D_t,D_x))_{\psi} 
\end{split}
\end{equation}
where $A_\psi = A_\psi(y,D_t,D_x)$ is the $\psi$-pseudodifferential operator with generalized symbol $(a_\varepsilon)_\varepsilon$ and
\begin{equation}\label{SymbolA}
a_\varepsilon(y,\tau,\xi) = c_\varepsilon(y)\tau^2 - \langle b_\varepsilon(y) \xi, \xi \rangle \qquad \varepsilon \in (0,1].
\end{equation}
To recall the requirements made on the coefficients $c_\varepsilon(y)$ and $b_{j,\varepsilon}(y), 1 \leq j \leq n-1$ we also refer to Section~\ref{sec:Intro}.

Before stating the main theorem of this section we give a few more details about notation. In the following we will study operators on $\mathbb{R}^{n+1}$ of the form
\begin{equation*}
S_\psi = \sum_{j=0}^2 S_{j,\psi} (y,D_t,D_x) (\partial_z^{2-j})_\psi
\end{equation*}
with coefficients $S_{j,\psi} \in {\mathop{\rm OP}}_\psi \MSym[k_j,0]{1,l_j}$ for some $k_j,l_j\in \mathbb{R}$, $j=0,1,2$. Further we write 
\begin{equation*}
S_\psi = \sum_{j=0}^2 S_{j,\psi} (y,D_t,D_x) (\partial_z^{2-j})_\psi \quad \text{ on } I'
\end{equation*}
when the symbols of the coefficients $S_{j,\psi}$ are restricted to a set $I'$.

Concerning the factorization domain we make similar considerations as in \cite{Stolk:04} and introduce the set
\begin{equation}\label{BasicSet}
I'_{\theta_1} := \Bigl\{ (y,\tau,\xi) \ | \ \tau \neq 0, \ \langle b(y)\xi,\xi \rangle < \sin^2 (\theta_1) c(y) \tau^2 \Bigr\}
\end{equation}
for some $\theta_1 \in (0, \pi/2)$. Then $I'_{\theta_1}$ is an open subset of $\mathbb{R}^n \times (\mathbb{R}^n \setminus 0)$ and conic with respect to the second variable. Moreover we have chosen $I'_{\theta_1}$ independent of the parameter $\varepsilon \in (0,1]$ for simplicity. Note that the main reason for restricting the analysis to the domain $I'_{\theta_1}$ is that $(a_\varepsilon)_\varepsilon$ is non-negative there.

Further, as already mentioned in Section~\ref{sec:PsiPseudoCalculus} it is necessary to introduce the notion of polyhomogeneous generalized symbols that are smoothed off at the origin.

\begin{defn}\label{defn:PhgSymb}
We say that a generalized symbol $(p_\varepsilon)_\varepsilon$ is polyhomogeneous in $\MSym[m,k]{\nu,l}$, denoted by $(p_\varepsilon)_\varepsilon \in \MphgSym[m,k]{\nu,l}$, if there exists a sequence of symbols $(\overline{p}_{m-j,\varepsilon})_\varepsilon$ in $\MhgSym[m-j,k]{\nu,\nu j+l}$, $j \geq 0$ and a cut-off $\varphi \in \mathcal{C}_{\rm c}^{\infty}(\mathbb{R}^n)$ equal to $1$ near the origin so that $\forall N \geq 1$ we have
\begin{equation}\label{AE:PhgSymb}
(p_\varepsilon - \sum_{j = 0}^{N-1} \varepsilon^j (1-\varphi) \overline{p}_{m-j,\varepsilon})_\varepsilon \in \MSym[m-N,N+k]{\nu,\nu N+l} \quad \text{ for } \xi \text{ sufficiently large.}
\end{equation} 
For this we will use the following notation
\begin{equation*}
(p_\varepsilon)_\varepsilon \dot{\sim} \ \sum_{j \geq 0} (\varepsilon^j \overline{p}_{m-j,\varepsilon})_\varepsilon \qquad \text{in } \MSym[m,k]{\nu,l}.
\end{equation*}
Here the homogeneous part $(\overline{p}_{m,\varepsilon})_\varepsilon$ is called the principal symbol of $(p_\varepsilon)_\varepsilon$ if there exist an $\eta \in (0,1]$ and a constant $ K > 0$ such that $\overline{p}_{m,\varepsilon} \not\equiv 0$ for all $|\xi| \geq K$ and any fixed $\varepsilon \in (0,\eta]$. In particular a $\psi$-pseudodifferential operator is polyhomogeneous if its generalized symbol is polyhomogeneous. 
\end{defn}
\begin{rem}
First note that the principal symbol is uniquely determined if the dual variable is sufficiently large. Moreover one can state (\ref{AE:PhgSymb}) more precisely, that is $\forall N \geq 1$ we have 
\begin{equation*}
(p_\varepsilon - \sum_{j = 0}^{N-1} \varepsilon^j (1-\varphi) \overline{p}_{m-j,\varepsilon})_\varepsilon \in \MSym[m-N,N+k]{\nu,\nu N+l} 
\end{equation*}
modulo some compactly supported regular generalized symbol. Here the latter error term corresponds to a regular generalized symbol $(r_\varepsilon)_\varepsilon$ of order $-\infty$ in the following sense
\begin{equation*}
\begin{split}
\exists \eta \in (0,1] \ \exists k \in \mathbb{R} \ \forall m \in \mathbb{R} \ \forall \alpha, \beta \in \mathbb{N}^n & \ \exists C > 0 \text{ such that}\\
& | \partial_{\xi}^{\alpha} \partial_x^{\beta} r_\varepsilon (x,\xi) |  \leq C \varepsilon^{k} \langle \xi \rangle^{m - |\alpha|} \quad \forall \varepsilon \in (0,\eta].
\end{split}
\end{equation*}
\end{rem}
The aim of this section is now to factorize $L_\psi$ on $I'_{\theta_1}$ in terms of two first-order operators of the form $L_{j,\psi} = (\partial_z + A_{j})_\psi$ on $I'_{\theta_1}$ with $A_{j,\psi}$ a polyhomogeneous $\psi$-pseudodifferential operator with generalized symbol in $\MphgSym[1,0]{1,0}$ on $I'_{\theta_1}$, $j=1,2$. Here a first rough approximation would suggest $A_{j,\psi}$ to be the square root of the operator $A_\psi = {\mathop{\rm OP}}_\psi ((a_\varepsilon)_\varepsilon)$ given in (\ref{GovOp}) on the set $I'_{\theta_1}$ where $\pm (i \sqrt{a_\varepsilon})_\varepsilon$ is described explicitly, $j=1,2$.

We are now going to show the following result:
\begin{thm}\label{thm:Fact}
Let $L_\psi = (\partial_z +A)_\psi$ and $I'_{\theta_1}$ be as in (\ref{GovOp}) and (\ref{BasicSet}) respectively. 
Then the operator $L_\psi$ can be factorized into
\begin{equation}\label{eqn:Fact}
L_\psi = L_{1,\psi} L_{2,\psi} + R_\psi \qquad \text{ on } I'_{\theta_1}
\end{equation}
where $L_{j,\psi} = (\partial_z + A_{j})_\psi$ and $A_{j,\psi} \in {\mathop{\rm OP}}_\psi \MphgSym[1,0]{1,0}$ on $I'_{\theta_1}$ whose principal symbol is equal to $\pm (i \sqrt{a_\varepsilon})_\varepsilon$, $j=1,2$. Moreover the remainder is given by $R_\psi = \Gamma_{0,\psi} + \Gamma_{1,\psi}(\partial_z)_\psi$ for some $\psi$-pseudodifferential operators $\Gamma_{j,\psi}$ with generalized symbol $(\gamma_{j,\varepsilon})_\varepsilon$ in $\mathcal{N}_{{\displaystyle\mathcal{S}}^{2-j}}$ on $I'_{\theta_1}$, $j=0,1$.
\end{thm}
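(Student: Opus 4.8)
The plan is to mimic the classical factorization argument (as in \cite[Appendix II]{Kumano-go:81}) but carried out at the level of generalized symbols with the $\psi$-calculus of Section~\ref{sec:PsiPseudoCalculus}. First I would compute the symbol of the product $L_{1,\psi}L_{2,\psi}$ formally: writing $L_{j,\psi}=(\partial_z+A_j)_\psi$ and expanding, one gets
\begin{equation*}
L_{1,\psi}L_{2,\psi}=(\partial_z^2)_\psi + (A_{1,\psi}+A_{2,\psi})(\partial_z)_\psi + A_{1,\psi}A_{2,\psi} + [(\partial_z)_\psi, A_{2,\psi}],
\end{equation*}
where the commutator contributes an operator whose symbol involves $(\partial_z a_{2,\varepsilon})_\varepsilon$. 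Matching this with $L_\psi=(\partial_z^2+A)_\psi$ on $I'_{\theta_1}$ forces the two symbolic equations $A_{1,\psi}+A_{2,\psi}\equiv 0$ (modulo negligible, first order) and $A_{1,\psi}\#A_{2,\psi} - (\partial_z)_\psi A_{2,\psi}\equiv A_\psi$ (modulo negligible, second order), interpreted via Proposition~\ref{prop:Product} and the polyhomogeneous expansion of Definition~\ref{defn:PhgSymb}.

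Next I would construct the symbols $(a_{j,\varepsilon})_\varepsilon$ iteratively in their polyhomogeneous components. The principal symbol is dictated by the requirement that $A_{1,\psi}A_{2,\psi}$ have principal symbol $-a_\varepsilon$, giving $\overline{a}_{1,1,\varepsilon}=-\overline{a}_{2,1,\varepsilon}= \mp i\sqrt{a_\varepsilon}$; this is where the restriction to $I'_{\theta_1}$ enters, since by the remark after \eqref{BasicSet} the symbol $(a_\varepsilon)_\varepsilon$ is there non-negative and bounded below, so $(\sqrt{a_\varepsilon})_\varepsilon$ is a well-defined element of $\MhgSym[1,0]{1,0}$ on $I'_{\theta_1}$ — one checks the log-type derivative bounds using that $c_\varepsilon, b_{j,\varepsilon}$ are of log-type with exponent $r$ and strictly nonzero, together with the chain rule applied to the square root (the denominators stay bounded away from zero on the cone). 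Then I would solve recursively for the lower-order homogeneous pieces $\overline{a}_{j,1-k,\varepsilon}$, $k\geq 1$: at each stage the transport-type equation for the $k$-th term is linear in the unknown with the elliptic factor $2\sqrt{a_\varepsilon}$ (or $2i\sqrt{a_\varepsilon}$) in front, and the right-hand side is a universal polynomial expression in the previously determined terms, their $y$- and $\xi$-derivatives, and $\partial_z$-derivatives of $a_\varepsilon$; each such step raises the $\varepsilon$-power by one and the $\omega_\varepsilon$-log-type by one unit, matching the bookkeeping in \eqref{AE:PhgSymb}. Invoking Lemma~\ref{lem:AE1} (applied to the homogeneous terms smoothed off at the origin, as in the remark following it) I obtain genuine generalized symbols $(a_{j,\varepsilon})_\varepsilon\in\MphgSym[1,0]{1,0}$ on $I'_{\theta_1}$ with the prescribed asymptotic expansions.

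With $A_{1,\psi}, A_{2,\psi}$ so constructed, the remainder $R_\psi := L_\psi - L_{1,\psi}L_{2,\psi}$ is, on $I'_{\theta_1}$, of the stated form $\Gamma_{0,\psi}+\Gamma_{1,\psi}(\partial_z)_\psi$ because all $\partial_z^2$ terms cancel exactly and the surviving $\partial_z$-coefficient and $\partial_z$-free coefficient are, by the recursive cancellation above, of arbitrarily high negative order in $\xi$ on the cone; since on $I'_{\theta_1}$ such symbols satisfy estimates $q^{(2-j)}_{\alpha,\beta}(\gamma_{j,\varepsilon})\leq C\varepsilon^{q}$ for all $q$ (the $\varepsilon$-powers accumulated through the recursion are unbounded once one passes to the full asymptotic sum and truncates), one concludes $(\gamma_{j,\varepsilon})_\varepsilon\in\mathcal{N}_{{\displaystyle\mathcal{S}}^{2-j}}$ on $I'_{\theta_1}$, $j=0,1$. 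I would also record that the product $A_{1,\psi}A_{2,\psi}$ is itself a well-defined $\psi$-pseudodifferential operator by Proposition~\ref{prop:Product}, so all compositions above make sense in $\mathcal{G}_{2,2}$.

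The main obstacle I anticipate is controlling the log-type ($\omega_\varepsilon$) growth through the recursion: taking square roots and repeatedly differentiating introduces negative powers of $\sqrt{a_\varepsilon}$, and one must verify that on $I'_{\theta_1}$ these stay uniformly estimable so that each recursively defined term genuinely lands in $\MhgSym[1-k,0]{1,\nu k}$ with the correct log-type index $l_k=\nu k$, keeping the sequence $\{(a_{j,k,\varepsilon})_\varepsilon\}_k$ within the uniformity hypothesis \eqref{UniformCond} required to apply Lemma~\ref{lem:AE1}. A secondary technical point is the interface between the local (conic, smoothed-at-origin) nature of the symbols on $I'_{\theta_1}$ and the global symbol classes in which Lemma~\ref{lem:AE1} is stated; this is handled exactly as in the remark after Lemma~\ref{lem:AE1}, by multiplying the homogeneous pieces by the cut-off $(1-\varphi)$ and absorbing the resulting compactly supported discrepancy into a regular generalized symbol of order $-\infty$, which is harmless since it is in particular negligible relative to the remainder bookkeeping.
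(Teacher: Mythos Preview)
Your proposal is correct and follows essentially the same route as the paper: recursive construction of the polyhomogeneous terms of $(a_{j,\varepsilon})_\varepsilon$ starting from $\pm i\sqrt{a_\varepsilon}$, solving at each step a linear system whose invertibility comes from the ellipticity of $\sqrt{a_\varepsilon}$ on the cone, and summing via Lemma~\ref{lem:AE1}. The only point you under-specify is the ``interface'' issue: besides the radial cut-off $(1-\varphi)$ at the origin you also need an angular cut-off (the paper's $\widetilde{h}_\varepsilon$, built on a slightly larger cone $I'_{\theta_2}\supset I'_{\theta_1}$) to extend $\sqrt{a_\varepsilon}$ to a genuine global symbol, since $a_\varepsilon$ is only positive on the cone.
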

We remark that the product $L_{1,\psi} L_{2,\psi}$ in (\ref{eqn:Fact}) is not a $\psi$-pseudodifferential operator on $\mathbb{R}^{n+1}$. But one can overcome this by introducing a generalized cut-off $g_\psi (D_t,D_y)$ such that the difference $L_{1,\psi} L_{2,\psi} g_\psi - L_{1,\psi} L_{2,\psi}$ is insignificant on some adequate subdomain of the phase space $T^* \mathbb{R}^{n+1}\setminus 0$ under a microlocal point of view. 
For the specification for microlocal analysis we refer to the next section.
\subsection*{Technical preliminaries}

Note that $a_\varepsilon$ as in (\ref{SymbolA}) is a homogeneous function with respect to the dual variables $(\tau,\xi)$ and so that $\exists \eta \in (0,1]$ such that
\begin{equation*}
|a_\varepsilon(y,\tau,\xi)| \geq C |(\tau,\xi)|^2 \qquad \text{ on } I'_{\theta_2} \text{ for every } \varepsilon \in (0,\eta]
\end{equation*}
for some fixed $\theta_2 \in (0,\pi/2)$ with $\theta_1 < \theta_2$ and the constant $C>0$ is independent of $\varepsilon$. Moreover we set for some $\varepsilon_0 \in (0,1]$ which will be specified later
\begin{equation}\label{RootA}
\overline{b}_{j,\varepsilon}^{(0)}(y,\tau,\xi) := \begin{cases} 
\pm i \sqrt{a_\varepsilon(y,\tau,\xi)}, & \varepsilon \in (0,\varepsilon_0] \\
0, & \varepsilon \in (\varepsilon_0,1]
\end{cases} \qquad \text{on } I'_{\theta_2}
\end{equation}
which in turn gives $(\overline{b}^{(0)}_{j,\varepsilon})_\varepsilon \in \MhgSym[1,0]{1,0}$ on $I'_{\theta_2}$. To $(\overline{b}^{(0)}_{j,\varepsilon})_\varepsilon$ we now associate a generalized symbol $(\overline{c}^{(0)}_{j,\varepsilon})_\varepsilon \in \MSym[1,0]{1,0}(\mathbb{R}^n \times (\mathbb{R}\setminus 0) \times \mathbb{R}^{n-1})$ so that the difference $(\overline{c}^{(0)}_{j,\varepsilon} -  \overline{b}^{(0)}_{j,\varepsilon})_\varepsilon$ is in $\mathcal{N}_{{\displaystyle\mathcal{S}}^1}$ on $I_{\theta_1}'$ in the sense of Definition~\ref{defn:SymbOpenSet}. 

Therefore since $(c_\varepsilon(y))_\varepsilon$ is strictly non-zero we can define for fixed $\varepsilon \in(0,\varepsilon_1]$
\begin{equation*}
f_\varepsilon(y,\tau,\xi) := \frac{\langle b_\varepsilon(y) \xi, \xi\rangle}{c_\varepsilon(y) \tau^2} \qquad \text{on } \mathbb{R}^n \times (\mathbb{R} \setminus 0) \times \mathbb{R}^{n-1}.
\end{equation*}
Recall that $\varepsilon_1 \in (0,1]$ is so that $\inf_{y \in \mathbb{R}^n} |c_\varepsilon(y)| \geq C$ for some $C>0$ and every $\varepsilon \in (0,\varepsilon_1]$. Furthermore for every fixed $\varepsilon \in (0,\varepsilon_1]$ we let $\widetilde{h}_\varepsilon$ be the smooth function defined on $\mathbb{R}^n \times (\mathbb{R} \setminus 0) \times \mathbb{R}^{n-1}$, $0 \leq \widetilde{h}_\varepsilon \leq 1$, which is given by 
\begin{equation}\label{CutOff}
\widetilde{h}_\varepsilon(y,\tau,\xi) := \begin{cases}
0, & |f_\varepsilon| \geq \sin^2 \gamma_2 \\
1, & |f_\varepsilon| \leq \sin^2 \gamma_1\\
\frac{1}{{1 + e^{\frac{1}{|f_\varepsilon| - \sin^2 \gamma_1} - \frac{1}{\sin^2 \gamma_2 - |f_\varepsilon|}}}}, & \sin^2 \gamma_1 < |f_\varepsilon| < \sin^2 \gamma_2
\end{cases}
\end{equation}
for some fixed $\gamma_1$ and $\gamma_2$ with $0 < \gamma_1 < \gamma_2 < \pi/2$. For all other $\varepsilon \in (0,1]$ we may set $\widetilde{h}_\varepsilon \equiv 0$ and we obtain that $(\widetilde{h}_\varepsilon)_\varepsilon \in \MSym[0,0]{1,0}(\mathbb{R}^n \times (\mathbb{R}\setminus 0) \times \mathbb{R}^{n-1})$ and also $(\widetilde{h}_\varepsilon)_\varepsilon \in \MhgSym[0,0]{1,0}$ on $I'_{\theta_2}$.

Here and hereafter we let $\theta_1, \theta_2$ be fixed so that $0 < \theta_1 < \gamma_1 < \gamma_2 < \theta_2 < \pi/2$. Then there is an $\varepsilon_2 \in(0,1]$ such that
\begin{equation*}
I'_{\theta_1} \subset \{ (y,\tau,\xi) \ | \ \tau \neq 0 \text{ and } |f_\varepsilon| \leq \sin^2 \gamma_1 \} \qquad \varepsilon \in (0,\varepsilon_2]
\end{equation*}
because of the assumptions made on the coefficients $c_\varepsilon(y), b_{j,\varepsilon}(y), y \in \mathbb{R}^n, 1 \leq j \leq n-1$.

Similar observations yield to the following: $\exists \varepsilon_3 \in (0,1]$ such that
\begin{equation*}
\{ (y,\tau,\xi) \ | \ \tau \neq 0, |f| \geq \sin^2 \theta_2 \} \subset \{(y,\tau,\xi) \ | \ \tau \neq 0, |f_\varepsilon| \geq \sin^2 \gamma_2 \}
\end{equation*}
for all $\varepsilon \in (0,\varepsilon_3]$ where we have set $f := \lim_{\varepsilon \to 0} f_\varepsilon$.

Then with $(\overline{b}^{(0)}_{j,\varepsilon})_\varepsilon$ as in (\ref{RootA}) we set $\varepsilon_0 := \min_{1 \leq k \leq 3} \varepsilon_k$ and define its extension $(\overline{c}^{(0)}_{j,\varepsilon})_\varepsilon$ by
\begin{equation*}
(\overline{c}^{(0)}_{j,\varepsilon})_\varepsilon := (\widetilde{h}_\varepsilon \overline{b}^{(0)}_{j,\varepsilon})_\varepsilon \in \MSym[1,0]{1,0} (\mathbb{R}^n \times (\mathbb{R}\setminus 0) \times \mathbb{R}^{n-1}).
\end{equation*}
Indeed we have that $(\overline{c}^{(0)}_{j,\varepsilon} - \overline{b}^{(0)}_{j,\varepsilon})_\varepsilon \in \mathcal{N}_{{\displaystyle\mathcal{S}}^1}$ on $I_{\theta_1}'$ since $\widetilde{h}_\varepsilon \equiv 1$ on $I_{\theta_1}'$.
\subsection*{Factorization procedure}

Our aim here is to decompose the operator $L_\psi$ as announced in (\ref{eqn:Fact}). Therefore we give a construction scheme for the generalized symbols $(a_{j,\varepsilon})_\varepsilon$ of the operators $A_{j,\psi}$, $j=1,2$ by means of their polyhomogeneous asymptotic expansions, i.e. on the set $I'_{\theta_1}$ we have
\begin{equation}\label{AE:aj}
(a_{j,\varepsilon}(y,\tau,\xi))_\varepsilon \ \dot{\sim} \ \sum_{\mu \geq 0} \bigl(\varepsilon^{\mu} \overline{c}_{j,\varepsilon}^{(\mu)}(y,\tau,\xi) \bigr)_\varepsilon \quad \text{ in } \MSym[1,0]{1,0} 
\end{equation}
and the sequence $\{ (\overline{c}_{j,\varepsilon}^{(\mu)})_\varepsilon\}_{\mu \in \mathbb{N}}$ consists of elements $(\overline{c}_{j,\varepsilon}^{(\mu)})_\varepsilon \in \MSym[1-\mu,0]{1,\mu}(\mathbb{R}^n \times (\mathbb{R}\setminus 0) \times \mathbb{R}^{n-1})$ and satisfies the uniformity condition 
\begin{multline*}
\qquad \exists \eta \in (0,1] \ \exists K > 0 \ \forall \mu \in \mathbb{N} \ \forall \alpha,\beta \in \mathbb{N}^n \ \exists C>0: \\
|\partial_{(\tau,\xi)}^{\alpha} \partial_y^{\beta} \overline{c}_{j,\varepsilon}^{(\mu)}(y,\tau,\xi)| \leq C \omega_\varepsilon^{|\beta|} \langle \xi\rangle^{1-\mu-|\alpha|} \quad \text{for } |\tau| \geq K, \ \varepsilon \in (0,\eta]\qquad
\end{multline*}
for $j=1,2$. More precisely we will recursively construct a sequence $\{(\overline{b}_{j,\varepsilon}^{(\mu)})_\varepsilon\}_{\mu}$ of symbols $(\overline{b}_{j,\varepsilon}^{(\mu)})_\varepsilon$ in $\MhgSym[1-\mu,\mu]{1,\mu}$ on $I'_{\theta_2}$, $j=1,2$ such that 
\begin{equation*}
\bigl( \overline{c}_{j,\varepsilon}^{(\mu)}(y,\tau,\xi) \bigr)_\varepsilon = \bigl( \varepsilon^{-\mu} \overline{b}_{j,\varepsilon}^{(\mu)}(y,\tau,\xi) \bigr)_\varepsilon \qquad \text{on } I'_{\theta_2}.
\end{equation*}
Recall that on $I'_{\theta_1}$ (\ref{AE:aj}) is equivalent to the following: there exists a smooth cut-off equal to $1$ near the origin such that for every $N \geq 1$ we have on $I'_{\theta_1}$
\begin{equation*}
\bigl(a_{j,\varepsilon} - \sum_{\mu = 0}^{N-1} \varepsilon^\mu (1-\varphi) \overline{c}_{j,\varepsilon}^{(\mu)} \bigr)_\varepsilon \in \MSym[1-N,N]{1,N} \quad \text{for } |(\tau,\xi)| \text{ sufficiently large.}
\end{equation*}
\begin{proof}[Proof of Theorem~\ref{thm:Fact}]
To begin with, we set for $j=1,2$ and $\varepsilon \in (0,\varepsilon_0]$
\begin{equation*}
\begin{array}{cccccc}
\overline{a}_{j,\varepsilon}^{(1)}:= \overline{b}_{j,\varepsilon}^{(0)} \ && \ l_{j,\varepsilon}^{(1)} := i\zeta + a_{j,\varepsilon}^{(1)}.
\end{array}
\end{equation*}
where $\overline{b}_{j,\varepsilon}^{(0)}(y,\tau,\xi) = \pm i\sqrt{ a_\varepsilon(y,\tau,\xi) }$ on $I'_{\theta_2}$, $j=1,2$. Moreover $(a^{(1)}_{j,\varepsilon})_\varepsilon \in \MphgSym[1,0]{1,0}$ with polyhomogeneous asymptotic expansion $(\widetilde{h}_\varepsilon \overline{b}_{j,\varepsilon}^{(0)})_\varepsilon$ with $\widetilde{h}_\varepsilon$ as in (\ref{CutOff}) and $(\overline{b}_{j,\varepsilon}^{(0)})_\varepsilon$ only prescribed on $I'_{\theta_2}$, $j=1,2$. As in (\ref{RootA}) we set $\overline{b}_{j,\varepsilon}^{(0)}$ and $l_{j,\varepsilon}^{(1)}$ equal to zero for all other $\varepsilon \in (0,1]$ . Furthermore we define the first-order operator $L_{j,\psi}^{(1)}$ through $(\partial_z + A_{j}^{(1)})_\psi$ where $A^{(1)}_{j,\psi} = {\mathop{\rm OP}}_\psi ((a_{j,\varepsilon}^{(1)})_\varepsilon)$ for $j=1,2$.

Now taking $L_{1,\psi}^{(1)} L_{2,\psi}^{(1)}$ as a first approximation for $L_\psi$ we can compute the error as follows:
\begin{equation*}
\begin{split}
L_{1,\psi}^{(1)} L_{2,\psi}^{(1)} - L_{\psi} &=\phantom{:} (\partial_z + A_1^{(1)})_{\psi} (\partial_z + A_2^{(1)})_{\psi} - (\partial^2_z + A)_{\psi} =\\
& =\phantom{:}  (A^{(1)}_{1} + A^{(1)}_{2})_{\psi} (\partial_z)_{\psi} + {\mathop{\rm OP}}_{\psi}\bigl( (\varepsilon \hspace{1pt} \partial_z a_{2,\varepsilon}^{(1)})_\varepsilon \bigr) + A_{1,\psi}^{(1)} A_{2,\psi}^{(1)} - A_{\psi} =\\
& =: \Gamma_{0,\psi}^{(1)} + \Gamma_{1,\psi}^{(1)}(\partial_z)_{\psi}
\end{split}
\end{equation*}
where we have used the Leibniz composition rule for $\psi$-pseudodifferential operators, that is $(\partial_z)_{\psi} A_{2,\psi}^{(1)} = A_{2,\psi}^{(1)} (\partial_z)_{\psi} + {\mathop{\rm OP}}_{\psi} \bigl( (\varepsilon \partial_z a_{2,\varepsilon}^{(1)})_\varepsilon \bigr)$.
 
Then $\Gamma_{1,\psi}^{(1)} = (A^{(1)}_{1} + A^{(1)}_{2})_{\psi} \in {\mathop{\rm OP}}_\psi \mathcal{N}_{{\displaystyle\mathcal{S}}^1}$ on $I'_{\theta_1}$. Moreover $\Gamma_{0,\psi}^{(1)}$ is equal to ${\mathop{\rm OP}}_{\psi}\bigl( (\varepsilon \hspace{1pt} \partial_z a_{2,\varepsilon}^{(1)})_\varepsilon \bigr) + A_{1,\psi}^{(1)} A_{2,\psi}^{(1)} - A_{\psi}$ and is a polyhomogeneous  $\psi$-pseudodifferential operator with generalized symbol $(\gamma_{0,\varepsilon}^{(1)})_\varepsilon$ in $\MphgSym[1,1]{1,1}(I'_{\theta_1})$ by Proposition~\ref{prop:Product}. In detail $(\gamma_{0,\varepsilon}^{(1)})_\varepsilon$ admits the asymptotic expansion 
\begin{equation*}
(\gamma_{0,\varepsilon}^{(1)})_\varepsilon \ \dot{\sim} \ \Bigl(\varepsilon \partial_z \overline{a}_{2,\varepsilon}^{(1)} + \sum_{|\alpha| \ge 1} \frac{{\varepsilon}^{|\alpha|}}{\alpha!} \bigl( D_{\xi}^{\alpha} \overline{a}_{1,\varepsilon}^{(1)} \bigr) \bigl( \partial_{x}^{\alpha} \overline{a}_{2,\varepsilon}^{(1)} \bigr) \Bigr)_\varepsilon  \qquad \text{on } I'_{\theta_1}.
\end{equation*}
To improve this approximation we proceed by induction. For the convenience of the reader we also compute the second order approximation for $L_\psi$.

Therefore we let $\varepsilon \in (0,\varepsilon_0]$ be fixed and define
\begin{equation*}
\begin{array}{cccc}
\overline{a}_{j,\varepsilon}^{(2)}:= \overline{a}_{j,\varepsilon}^{(1)} + \overline{b}_{j,\varepsilon}^{(1)} \ && \ l_{j,\varepsilon}^{(2)} := i\zeta + a_{j,\varepsilon}^{(2)}.
\end{array}
\end{equation*}
where $(a_{j,\varepsilon}^{(2)})_\varepsilon$ denotes the symbol with polyhomogeneous asymptotic expansion $(\widetilde{h}_\varepsilon \overline{a}_{j,\varepsilon}^{(2)})_\varepsilon$ and the existence of the generalized symbol $(\overline{b}_{j,\varepsilon}^{(1)})_\varepsilon$ will be clarified immediately. Again we set $\overline{b}_{j,\varepsilon}^{(1)}$ and $l_{j,\varepsilon}^{(2)}$ equal zero for all other $\varepsilon \in (0,1]$. From the preceding observations we have the approximation
\begin{equation*}
L_{1,\psi}^{(1)} L_{2,\psi}^{(1)}  - L_{\psi} = \Gamma_{0,\psi}^{(1)} + \Gamma_{1,\psi}^{(1)} (\partial_z)_\psi \qquad \text{ on } I_{\theta_1}'.  \end{equation*}
In order to obtain a suitable second order approximation we would have to keep the following expression small
\begin{equation}\label{SecApprox}
\begin{split}
L_{1,\psi}^{(2)} L_{2,\psi}^{(2)} - L_{\psi} &= ( L_{1}^{(1)} + B_{1}^{(1)} )_\psi ( L_{2}^{(1)} +  B_{2}^{(1)} )_\psi - L_{\psi} = \\
 & \hspace{2pt}
 \begin{split} = & \ \Gamma_{0,\psi}^{(1)} + \Gamma_{1,\psi}^{(1)}(\partial_z)_{\psi} + ( B_{1}^{(1)} + B_{2}^{(1)} )_\psi ( \partial_z )_{\psi} +\\ & + {\mathop{\rm OP}}_{\psi}\bigl( (\varepsilon \partial_z b_{2,\varepsilon}^{(1)})_\varepsilon \bigr) + A_{1,\psi}^{(1)} B_{2,\psi}^{(1)} + B_{1,\psi}^{(1)} A_{2,\psi}^{(1)} + B_{1,\psi}^{(1)} B_{2,\psi}^{(1)} 
 \end{split}
\end{split}
\end{equation}
where in the last step we have used the Leibniz rule. 

We now specify $(\overline{b}_{1,\varepsilon}^{(1)})_\varepsilon, (\overline{b}_{2,\varepsilon}^{(1)})_\varepsilon$ on $I'_{\theta_2}$ as follows: because the the generalized symbol $(\overline{a}_{j,\varepsilon}^{(1)})_\varepsilon$ satisfies
\begin{equation}\label{Ell:ps}
\exists \eta \in (0,1]: \quad |\overline{a}_{j,\varepsilon}^{(1)}(y,\tau,\xi)| \geq C |(\tau,\xi)| \qquad \text{ on } I'_{\theta_2}, \ \forall \varepsilon \in (0,\eta]
\end{equation}
for some constant $C>0$ that is independent of $\varepsilon \in (0,\eta]$, $j=1,2$, the matrix
\begin{equation}\label{CoeffMatrix}
\begin{pmatrix} 1 & 1 \\ \overline{a}_{2,\varepsilon}^{(1)} & \overline{a}_{1,\varepsilon}^{(1)} \end{pmatrix}
\end{equation}
is invertible on $I'_{\theta_2}$. We denote by $\bigl(\widetilde{\gamma}_{0,\varepsilon}^{(1)}\bigr)_\varepsilon$ and $\bigl(\widetilde{\gamma}_{1,\varepsilon}^{(1)}\bigr)_\varepsilon$ the naturally extended top order symbols on $I'_{\theta_2}$ of $\bigl(\gamma_{0,\varepsilon}^{(1)}\bigr)_\varepsilon$ and $\bigl(\gamma_{1,\varepsilon}^{(1)}\bigr)_\varepsilon$ on $I'_{\theta_1}$ respectively. Thus the system 
\begin{equation*}
\begin{cases}
\begin{array}{cclc}
-\widetilde{\gamma}_{1,\varepsilon}^{(1)} &=&  \overline{b}_{1,\varepsilon}^{(1)} + \overline{b}_{2,\varepsilon}^{(1)} \\
-\widetilde{\gamma}_{0,\varepsilon}^{(1)} &=&  \overline{b}_{1,\varepsilon}^{(1)} \overline{a}_{2,\varepsilon}^{(1)} + \overline{a}_{1,\varepsilon}^{(1)} \overline{b}_{2,\varepsilon}^{(1)} \end{array}
\end{cases}
\end{equation*}
is uniquely solvable for $(\overline{b}_{j,\varepsilon}^{(1)})_\varepsilon $ in $\MhgSym[0,1]{1,1}$ on $I'_{\theta_2}$, $j=1,2$.

With this choice of $(\overline{b}_{j,\varepsilon}^{(1)})_\varepsilon$ the second-order approximation for the operator $L_\psi$ in (\ref{SecApprox}) now reads
\begin{equation*}
L_{1,\psi}^{(2)} L_{2,\psi}^{(2)}  - L_{\psi} = \Gamma_{0,\psi}^{(2)} + \Gamma_{1,\psi}^{(2)}(\partial_z)_{\psi} \qquad \text{on } I'_{\theta_1}
\end{equation*}
and $\Gamma_{0,\psi}^{(2)} = \Gamma_{0,\psi}^{(2)}(y,D_t,D_x)$ and $\Gamma_{1,\psi}^{(2)} = \Gamma_{1,\psi}^{(2)}(y,D_t,D_x)$ are the $\psi$-pseudodifferential operators with generalized symbols $(\gamma_{0,\varepsilon}^{(2)})_{\varepsilon}$ in $\MphgSym[0,2]{1,2}$ on $I'_{\theta_1}$ and $(\gamma_{1,\varepsilon}^{(2)})_{\varepsilon}$ is in $\mathcal{N}_{{\displaystyle\mathcal{S}}^1}$ on $I'_{\theta_1}$, respectively.

To continue the proof, we assume for $N \ge 1$ that $(\overline{b}_{j,\varepsilon}^{(\nu)})_\varepsilon \in \MhgSym[1-\nu,\nu]{1,\nu}$ on $I'_{\theta_2}$ is determined for all $\nu \le N-1$ and $j=1,2$. For fixed $\varepsilon \in (0,\varepsilon_0]$ and $j=1,2$ we set
\begin{equation*}
\begin{array}{cccccc}
\overline{a}_{j,\varepsilon}^{(N)} := \displaystyle{\sum_{\nu = 0}^{N-1}} \overline{b}_{j,\varepsilon}^{(\nu)} \ && \ L_{j,\psi}^{(N)} = (\partial_z + A_j^{(N)})_{\psi}.
\end{array}
\end{equation*}
with $A_{j,\psi}^{(N)} (y,D_t,D_x) = {\mathop{\rm OP}}_{\psi}\bigl( (a_{j,\varepsilon}^{(N)})_\varepsilon \bigr)$ with $(a_{j,\varepsilon}^{(N)})_\varepsilon$ the symbol with polyhomogeneous expansion $(\widetilde{h}_\varepsilon \overline{a}_{j,\varepsilon}^{(N)})_\varepsilon$. Again for all other $\varepsilon \in (0,1]$ we set $a_{j,\varepsilon}^{(N)}$ and $L_{j,\varepsilon}^{(N)}$ equal to zero, $j=1,2$. Furthermore, we suppose an $N$-th order approximation for $L_\psi$ of the form
\begin{equation*}
L_{1,\psi}^{(N)} L_{2,\psi}^{(N)} - L_{\psi} = \Gamma_{0,\psi}^{(N)} + \Gamma_{1,\psi}^{(N)}(\partial_z)_\psi \qquad \text{on } I'_{\theta_1}
\end{equation*}
where $\Gamma_{0,\psi}^{(N)}$ and $\Gamma_{1,\psi}^{(N)}$ are the $\psi$-pseudodifferential operators with generalized symbols $(\gamma_{0,\varepsilon}^{(N)})_\varepsilon$ in $\MphgSym[2-N,N]{1,N}$ on $I'_{\theta_1}$ and $(\gamma_{1,\varepsilon}^{(N)})_\varepsilon$ in $\mathcal{N}_{{\displaystyle\mathcal{S}}^1}$ on $I'_{\theta_1}$, respectively. 

Again we denote by $\bigl(\widetilde{\gamma}_{0,\varepsilon}^{(N)}\bigr)_\varepsilon$ and $\bigl(\widetilde{\gamma}_{1,\varepsilon}^{(N)}\bigr)_\varepsilon$ the naturally extended top order terms on $I'_{\theta_2}$ of $\bigl(\gamma_{0,\varepsilon}^{(N)}\bigr)_\varepsilon$ and $\bigl(\gamma_{1,\varepsilon}^{(N)}\bigr)_\varepsilon$ on $I'_{\theta_1}$. For the induction step we specify $(b_{1,\varepsilon}^{(N)})_\varepsilon, (b_{2,\varepsilon}^{(N)})_\varepsilon$ on $I'_{\theta_2}$ as follows: since $(\overline{a}_{j,\varepsilon}^{(1)})_\varepsilon$ satisfies (\ref{Ell:ps}) the matrix in (\ref{CoeffMatrix}) is invertible on $I'_{\theta_2}$
and thus the system 
\begin{equation*}
\begin{cases}
\begin{array}{cclc}
-\widetilde{\gamma}_{1,\varepsilon}^{(N)} &=&  \overline{b}_{1,\varepsilon}^{(N)} + \overline{b}_{2,\varepsilon}^{(N)} \\
-\widetilde{\gamma}_{0,\varepsilon}^{(N)} &=&  \overline{b}_{1,\varepsilon}^{(N)} \overline{a}_{2,\varepsilon}^{(1)} + \overline{a}_{1,\varepsilon}^{(1)}  \overline{b}_{2,\varepsilon}^{(N)} \end{array}
\end{cases}
\end{equation*}
is uniquely solvable for $(\overline{b}_{j,\varepsilon}^{(N)})_\varepsilon $ in $\MhgSym[1-N,N]{1,N}$ on $I'_{\theta_2}$, $j=1,2$.

We write $B_{j,\psi}^{(N)}$ for the $\psi$-pseudodifferential operator with polyhomogeneous generalized symbol $(b_{1,\varepsilon}^{(N)})_\varepsilon:=(\widetilde{h}_\varepsilon \overline{b}_{j,\varepsilon}^{(N)})_\varepsilon$ and set $L_{j,\psi}^{(N+1)} := (L_{j}^{(N)} + B_j^{(N)})_\psi$. Then the following is valid
\begin{equation*}
\begin{split}
L_{1,\psi}^{(N+1)} L_{2,\psi}^{(N+1)} - L_{\psi} & = \bigl( L_{1}^{(N)} + B_{1}^{(N)} \bigr)_\psi \bigl( L_{2}^{(N)} + B_{2}^{(N)} \bigr)_\psi - L_{\psi} = \\
 \hspace{20pt}& \hspace{2pt}\begin{split} = & \ \Gamma_{0,\psi}^{(N)} + \Gamma_{1,\psi}^{(N)}(\partial_z)_\psi + \bigl( B_{1}^{(N)} + B_{2}^{(N)} \bigr)_\psi \bigl( \partial_z \bigr)_{\psi} + \\
  &+ {\mathop{\rm OP}}_{\psi} \bigl( (\varepsilon \partial_z b_{2,\varepsilon}^{(N)})_\varepsilon \bigr) + A_{1,\psi}^{(N)} B_{2,\psi}^{(N)} + B_{1,\psi}^{(N)} A_{2,\psi}^{(N)} + B_{1,\psi}^{(N)} B_{2,\psi}^{(N)}.
 \end{split}
\end{split}
\end{equation*}
Indeed we have an ($N$+1)-th order approximation of the following form
\begin{equation*}
L_{1,\psi}^{(N+1)} L_{2,\psi}^{(N+1)} - L_{\psi} = \Gamma_{0,\psi}^{(N+1)} + \Gamma_{1,\psi}^{(N+1)}(\partial_z)_\psi
\end{equation*}
and $\Gamma_{0,\psi}^{(N+1)}$ and $\Gamma_{1,\psi}^{(N+1)}$ are the $\psi$-pseudodifferential operators with generalized symbols $(\gamma_{0,\varepsilon}^{(N+1)})_\varepsilon \in \MphgSym[1-N,N+1]{1,N+1}$ and $(\gamma_{1,\varepsilon}^{(N+1)})_\varepsilon \in \mathcal{N}_{{\displaystyle\mathcal{S}}^1}$ on $I'_{\theta_1}$, respectively. This completes the induction step. 
\end{proof} 
\section{The Generalized Infinite Wave Front Set}\label{sec:WFInfty}
In this section we will discuss an alternative description of microlocality and regularity theory in Colombeau algebras of generalized functions. We refer to \cite{NPS:98, GaHo:05, GaHo:06} for more details on the commonly used notion of a generalized wave front set. Therein the generalized wave front set is explained by replacing the standard $\mathcal{C}^{\infty}$-regularity by $\mathcal{G}^{\infty}$-regularity. In the same manner we can reformulate local $H^{\infty}$-regularity used in the Sobolev based wave front set in terms of $\mathcal{G}^{\infty}$-regularity. 

In the case of generalized pseudodifferential operators that satisfies logarithmic slow scale estimates this is a straightforward modification of the regularity results in \cite{GaHo:05} within the Colombeau theory in a Sobolev based context as in \cite{Wunsch:08}. But the situation changes dramatically when working with $\psi$-pseudodifferential operators with symbols of log-type. 

For this reason we will introduce the notion of generalized infinite wave front set describing negligibility of a generalized function at infinite points. For further studies about the infinite wave front set and the more refined semiclassical wave front set we refer to \cite{CoMa:03, Alexandrova:08} and \cite{Martinez:02, EvZw, GuSt:10}.
\subsection{Microlocal behavior at infinity}

Here we introduce a suitable notion of asymptotic negligibility of a generalized function $u \in \mathcal{G}_{2,2} (\mathbb{R}^{n})$ with respect to certain regularities on the phase space. 

To do so we use the following notation which is similar to \cite{CoMa:03}. For a non-zero vector $\xi_0 \in \mathbb{R}^n$ we write for the projection onto the unit sphere $\xi_0/|\xi_0|$. Moreover, given such $\xi_0$ we say that $\Gamma_{\infty \xi_0} \subset \mathbb{R}^n$ is a conic neighborhood of the direction $\xi_0/|\xi_0|$ if $\Gamma_{\infty \xi_0}$ is the intersection of the complement of some open ball centered at the origin with an open cone containing the direction.

Furthermore, for $(x_0,\xi_0)$ in $T^*\mathbb{R}^{n} \setminus 0$ we say that a generalized symbol $p:=(p_\varepsilon)_\varepsilon \in \MphgSym[m,0]{\nu,0}$ is elliptic at $(x_0,\infty \xi_0)$ or also elliptic at infinity at $(x_0,\xi_0)$, if there is an open neighborhood $U$ of $x_0$ and a constant $C > 0$ such that
\begin{equation*}
|p_\varepsilon(x,\xi)| \geq C \langle \xi \rangle^m \qquad \forall (x,\xi) \in U \times \Gamma_{\infty \xi_0} \text{ as } \varepsilon \to 0.
\end{equation*}
We denote by ${\mathop{\rm Ell}}^{\rm i}(p)$ the set of all points $(x_0,\xi_0) \in T^* \mathbb{R}^n \setminus 0$ where $p$ is elliptic at infinity. 

Then the generalized infinite wave front set is defined as follows:
\begin{defn}
For $u \in \mathcal{G}_{2,2}(\mathbb{R}^n)$ we denote by ${\mathop{\rm WF}}^{\rm i}(u) \subset T^* \mathbb{R}^n \setminus 0$ the generalized infinite wave front set of $u$ which is characterized as follows. We say that $(x_0,\xi_0) \notin {\mathop{\rm WF}}^{\rm i}(u)$ at infinity, denoted by $(x_0,\infty \xi_0) \notin {\mathop{\rm WF}}^{\rm i}(u)$, if $\exists \chi := (\chi_\varepsilon)_\varepsilon \in \MphgSym[\infty,0]{0,0}$ elliptic at $(x_0,\infty \xi_0)$ so that
\begin{equation*}
{\mathop{\rm OP}}_{\psi}(\chi) u = 0 \quad \text{ in } \mathcal{G}_{2,2}(\mathbb{R}^n).
\end{equation*}
\end{defn}
\begin{rem}
Let $(u_\varepsilon)_\varepsilon \in \mathcal{M}_{H^{\infty}}$ and suppose that $\exists \chi_{m} := (\chi_{m,\varepsilon})_\varepsilon \in \MphgSym[m,0]{0,0}$ for some $m \in \mathbb{R}$ elliptic at $(x_0,\infty \xi_0)$ with
\begin{equation*}
\forall q \in \mathbb{N}: \quad \lVert {\mathop{\rm OP}}_{\psi,\varepsilon}(\chi_{m,\varepsilon}) u_\varepsilon \rVert_{L^2(\mathbb{R}^{n})} = \mathcal O (\varepsilon^q) \qquad \text{as } \varepsilon \to 0.
\end{equation*}
Using \cite[Proposition 3.4]{Garetto:05b} then the above line is equivalent to ${\mathop{\rm OP}}_{\psi}(\chi_{m}) u  = 0$ in $\mathcal{G}_{2,2}$.
\end{rem}
Furthermore we introduce the notion of an infinite conic support to describe singularities of $\psi$-pseudodifferential operators. Therefore let $p:=(p_\varepsilon)_\varepsilon \in \MphgSym[m,k]{\nu,l}$ be the generalized symbol of $P_\psi$. Then the infinite conic support of $P_\psi$ is the set ${\mathop{\rm conesupp}}^{\rm i}(p) \subset T^* \mathbb{R}^n \setminus 0$ and is defined as the complement (in $T^* \mathbb{R}^n \setminus 0$) of the set of points $(x_0,\xi_0) \in T^* \mathbb{R}^n \setminus 0$ such that there exists an open neighborhood $U$ of $x_0$, a conic open neighborhood $\Gamma_{\xi_0}$ of the direction $\xi_0$ and a constant $K > 0$ such that the following is satisfied
\begin{equation}\label{Iconesupp} 
\forall \alpha, \beta \in \mathbb{N}^{n}, \forall N \in \mathbb{N}: \quad
|\partial^{\alpha}_{\xi} \partial^{\beta}_{x} p_\varepsilon(x,\xi)| \langle \xi \rangle^{-m+N+|\alpha|} = \mathcal{O}(\varepsilon^{N}) \qquad \text{as } \varepsilon \to 0,
\end{equation}
uniformly in $(x,\xi) \in U \times \bigl(\Gamma_{\xi_0} \cap \{ \xi \in \mathbb{R}^n : |\xi| \geq K \} \bigr)$. This is again a condition on the behavior for a generalized symbol at infinity and we therefore say that $(x_0, \xi_0) \notin {\mathop{\rm conesupp}}^{\rm i} (p)$ at infinity and write $(x_0,\infty \xi_0) \notin {\mathop{\rm conesupp}}^{\rm i} (p)$ whenever (\ref{Iconesupp}) is fulfilled.

The idea here is that ${\mathop{\rm conesupp}}^{\rm i}(p)^c$ are the directions on the phase space in which $P_\psi$ annihilates singularities as they are contained in $\mathcal{N}_{{\displaystyle\mathcal{S}}^m}(U \times \Gamma)$. To give a connection to these two notions of wave front sets we state the following theorem. 
\begin{thm}\label{thm:InftyWF}
Given $u \in \mathcal{G}_{2,2}$ and $P_\psi$ a $\psi$-pseudodifferential operator with generalized symbol $p=(p_\varepsilon)_\varepsilon \in \MphgSym[m,k]{\nu,l}$. Then the following statement is valid
\begin{equation*}
{\mathop{\rm WF}}^{\rm i}(P_\psi u) \subset {\mathop{\rm WF}}^{\rm i}(u) \cap {\mathop{\rm conesupp}}^{\rm i}(p)
\end{equation*}
and we say that $\psi$-pseudodifferential operator microlocal at infinity.
\end{thm}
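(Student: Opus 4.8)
The plan is to prove the two inclusions ${\mathop{\rm WF}}^{\rm i}(P_\psi u) \subset {\mathop{\rm WF}}^{\rm i}(u)$ and ${\mathop{\rm WF}}^{\rm i}(P_\psi u) \subset {\mathop{\rm conesupp}}^{\rm i}(p)$ separately, since their intersection is what we want. In both cases I work by contraposition: I fix a point $(x_0,\xi_0) \in T^*\mathbb{R}^n \setminus 0$ and produce an elliptic (at infinity) symbol $\chi$ such that ${\mathop{\rm OP}}_\psi(\chi) P_\psi u = 0$ in $\mathcal{G}_{2,2}$, which by definition means $(x_0,\infty\xi_0) \notin {\mathop{\rm WF}}^{\rm i}(P_\psi u)$. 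The main tools are the composition calculus from Proposition~\ref{prop:Product} together with the parametrix construction alluded to in Lemma~\ref{lem:psiparametrix}, and the elementary fact (recorded in the Remark after the definition of ${\mathop{\rm WF}}^{\rm i}$) that a single $L^2$-estimate of order $\mathcal{O}(\varepsilon^q)$ for all $q$ suffices to conclude negligibility in $\mathcal{G}_{2,2}$, since ${\mathop{\rm OP}}_\psi(\chi)$ maps moderate nets to moderate nets.

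For the inclusion into ${\mathop{\rm WF}}^{\rm i}(u)$: suppose $(x_0,\infty\xi_0) \notin {\mathop{\rm WF}}^{\rm i}(u)$, so there is $\chi \in \MphgSym[\infty,0]{0,0}$ elliptic at $(x_0,\infty\xi_0)$ with ${\mathop{\rm OP}}_\psi(\chi) u = 0$. Choose a second polyhomogeneous symbol $\widetilde{\chi} \in \MphgSym[\infty,0]{0,0}$, also elliptic at $(x_0,\infty\xi_0)$, whose infinite conic support is contained in the region where $\chi$ is elliptic; then by the elliptic parametrix construction there is a symbol $q$ with $\widetilde\chi = q \# \chi$ modulo a negligible symbol on a conic neighborhood of $(x_0,\infty\xi_0)$, and modulo a symbol whose conic support avoids $(x_0,\infty\xi_0)$. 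Composing with $P_\psi$ and using Proposition~\ref{prop:Product} to rearrange the product as $\widetilde\chi \# p = (\text{something}) \# \chi + (\text{negligible}) + (\text{conesupp avoids }(x_0,\infty\xi_0))$, one applies this to $u$: the first term dies because ${\mathop{\rm OP}}_\psi(\chi)u = 0$, the negligible term dies, and the last term can be absorbed by shrinking the cone. Hence ${\mathop{\rm OP}}_\psi(\widetilde\chi) P_\psi u = 0$, giving $(x_0,\infty\xi_0) \notin {\mathop{\rm WF}}^{\rm i}(P_\psi u)$.

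For the inclusion into ${\mathop{\rm conesupp}}^{\rm i}(p)$: suppose $(x_0,\infty\xi_0) \notin {\mathop{\rm conesupp}}^{\rm i}(p)$, so $p$ satisfies (\ref{Iconesupp}) on $U \times (\Gamma_{\xi_0} \cap \{|\xi| \geq K\})$, i.e. $p$ is $\mathcal{N}_{{\displaystyle\mathcal{S}}^m}$ there. Pick $\chi \in \MphgSym[\infty,0]{0,0}$ elliptic at $(x_0,\infty\xi_0)$ with infinite conic support contained in this good region. Then the composed symbol $\chi \# p$, computed via the asymptotic expansion (\ref{AE:Product}), is a sum of terms each involving a $\xi$-derivative of $\chi$ times an $x$-derivative of $p$, all supported (up to negligible error) in the region where $p$ and all its derivatives are $\mathcal{O}(\varepsilon^N)$ for every $N$; hence $\chi \# p$ is a negligible symbol, and a fortiori ${\mathop{\rm OP}}_\psi(\chi)P_\psi$ has negligible symbol. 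Applied to any moderate representative $(u_\varepsilon)_\varepsilon$ this yields an $L^2$-estimate of order $\mathcal{O}(\varepsilon^q)$ for all $q$, hence ${\mathop{\rm OP}}_\psi(\chi)P_\psi u = 0$ in $\mathcal{G}_{2,2}$ and $(x_0,\infty\xi_0) \notin {\mathop{\rm WF}}^{\rm i}(P_\psi u)$.

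The main obstacle I anticipate is bookkeeping the two distinct error mechanisms that appear throughout the $\psi$-calculus — the genuinely negligible remainders (powers of $\varepsilon$) versus the remainders that are only small outside a compact set in $\xi$ or outside a fixed cone — and making sure that when one localizes with $\chi$ these "conic" errors are actually killed rather than merely shrunk, which requires that $\chi$ be supported strictly inside the relevant elliptic/good cone and that the asymptotic expansions of compositions converge in the appropriate $\MSym{}{}$-sense. A secondary technical point is that the semiclassical scaling introduces the worse $\varepsilon$-asymptotics for compactly supported operators mentioned in the introduction, so one must check that the $\chi$ used is of order $+\infty$ (i.e. vanishes to infinite order in $\langle\xi\rangle^{-1}$ on the relevant cone is not needed, but it must be polyhomogeneous of order $\infty$ so that compositions remain well-controlled) and that the parametrix step stays inside the polyhomogeneous class $\MphgSym[\infty,0]{0,0}$; this is exactly the kind of invariance the asymptotic expansion of the second kind was designed to furnish.
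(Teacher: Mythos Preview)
Your overall strategy matches the paper's, but there is a genuine ordering gap in your argument for the inclusion ${\mathop{\rm WF}}^{\rm i}(P_\psi u)\subset{\mathop{\rm WF}}^{\rm i}(u)$. From $\widetilde{\chi}=q\#\chi+(\text{errors})$ you compose on the right with $p$ and obtain $\widetilde{\chi}\#p=q\#\chi\#p+(\text{errors})$; applying this to $u$ gives ${\mathop{\rm OP}}_\psi(q)\,{\mathop{\rm OP}}_\psi(\chi)\,P_\psi u$ as main term. But the hypothesis is ${\mathop{\rm OP}}_\psi(\chi)u=0$, not ${\mathop{\rm OP}}_\psi(\chi)P_\psi u=0$, so nothing is killed. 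The word ``rearrange'' is hiding a step that does not follow from Proposition~\ref{prop:Product} alone. The fix, which is what the paper does, is to insert the parametrix on the \emph{right} of $P_\psi$: choose a cutoff $\kappa$ (of order $j-m$, so that $\kappa\#p$ has order $j$) with ${\mathop{\rm conesupp}}^{\rm i}(\kappa)$ inside the elliptic region of $\chi$, let $\widetilde{\chi}_{-j}$ be the parametrix so that $\widetilde{\chi}_{-j}\#\chi=I+R$ on that region, and write
\[
{\mathop{\rm OP}}_\psi(\kappa\#p)={\mathop{\rm OP}}_\psi(\kappa\#p\#\widetilde{\chi}_{-j})\,{\mathop{\rm OP}}_\psi(\chi)-{\mathop{\rm OP}}_\psi(\kappa\#p\#r).
\]
Now $\chi$ acts directly on $u$ and the first term vanishes; the second is handled because $r$ is negligible on the support of $\kappa$.

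A second, more technical point you underplay is the passage from ``the composed symbol is negligible'' to ``the operator is $L^2$-negligible''. For $\psi$-operators this is not automatic: one must rescale $\xi\mapsto\varepsilon\xi$ and invoke Calder\'on--Vaillancourt for the class $S^0_{0,0}$, and it is precisely here that the asymptotic expansion of the second kind is needed (so that the rescaled remainder $\chi\#_{_2}p(x,\varepsilon\xi)$ stays negligible, rather than losing all $\varepsilon$-gain as it would under the first-kind expansion). The paper carries this out explicitly, including choosing the cutoff of order $-m$ so the composed symbol has order $0$; your sketch mentions the second-kind expansion only in the closing paragraph and never actually deploys Calder\'on--Vaillancourt. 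Both inclusions in your argument ultimately rest on this step, so it should be made explicit.
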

We remark that most of the properties of the infinite wave front set of a generalized function in $\mathcal{G}_{2,2}$ can be derived from the theorem of Calderón-Vaillancourt for the class pseudodifferential operators with symbols in $S^0_{0,0}$.
\begin{proof}
We first show the inclusion relation ${\mathop{\rm WF}}^{\rm i}(P_\psi u) \subset {\mathop{\rm conesupp}}^{\rm i}(p)$. Therefore we let $(x_0,\xi_0) \in T^* \mathbb{R}^n \setminus 0$ such that $(x_0,\infty \xi_0) \notin {\mathop{\rm conesupp}}^{\rm i}(p)$ which in turn implies that $(p_\varepsilon)_\varepsilon$ is in $\mathcal{N}_{{\displaystyle\mathcal{S}}^m} (U \times \Gamma_{\xi_0})$ for some open neighborhood $U$ of $x_0$ and some conic open neighborhood $\Gamma_{\xi_0}$ of $\xi_0$. More precisely, we have that $\exists K_1 > 0$:
\begin{equation*}
\forall \alpha, \beta \in \mathbb{N}^{n} \ \forall N \in \mathbb{N}: \quad |\partial^{\alpha}_{\xi} \partial^{\beta}_{x} p_\varepsilon(x,\xi)| \langle \xi \rangle^{-m+N+|\alpha|} = \mathcal{O}(\varepsilon^{N}) \qquad \text{as } \varepsilon \to 0,
\end{equation*}
uniformly in $(x,\xi) \in U \times \Gamma_{\infty \xi_0}$ with $\Gamma_{\infty \xi_0} = \Gamma_{\xi_0} \cap \{ \xi \in \mathbb{R}^n : |\xi| \geq K_1 \}$.

We will now construct a symbol $\bigl(\chi^{(-m)}_{J,K} \bigr)_\varepsilon \in \MphgSym[-m,0]{0,0}$ elliptic at $(x_0,\infty \xi_0)$ such that
\begin{equation*}
\forall q \in \mathbb{N}: \quad \lVert {\mathop{\rm OP}}_{\psi,\varepsilon}(\chi^{(-m)}_{J,K}) P_{\psi,\varepsilon} u_\varepsilon \rVert_{L^2(\mathbb{R}^n)} = \mathcal O (\varepsilon^q) \qquad \text{as } \varepsilon \to 0.
\end{equation*}
For that reason let $\phi \in \mathcal{C}^{\infty}_{\rm c}(\mathbb{R}^{n})$ such that $\phi(z) = 1$ for $|z| \leq 1/2$ and $\phi(z) = 0$ for $|z| \geq 1$. Now, given $(x_0,\xi_0) \in T^* \mathbb{R}^n \setminus 0$ we define for fixed $J >0$ the function
\begin{equation*}
\lambda_{J}^{(-m)}(x,\xi) := \phi \Bigl(\frac{x-x_0}{J} \Bigr) \phi\left( \Bigl\{ \frac{\xi}{|\xi|} - \frac{\xi_0}{|\xi_0|} \Bigr\} \frac{1}{J}  \right) |\xi|^{-m}   \qquad \quad \text{for } \xi \not= 0.
\end{equation*}
Further for some fixed $K >0$ let
\begin{equation}\label{ConstructSymb}
\chi^{(-m)}_{J,K}(x,\xi) := (1-\phi)\Bigl( \frac{\xi}{2K} \Bigr) \lambda_{J}^{(-m)}(x,\xi).
\end{equation}
Then for $J,K > 0$ fixed $\bigl(\chi^{(-m)}_{J,K}(x,\xi) \bigr)_\varepsilon \in \MphgSym[-m,0]{0,0}$ is elliptic at $(x_0,\infty \xi_0)$ and supported in 
\begin{alignat*}{3}
|x - x_0| &\leq J, & \quad \Bigl| \frac{\xi}{|\xi|} - \frac{\xi_0}{|\xi_0|} \Bigr| &\leq J, & \quad |\xi| &\geq K.
\end{alignat*}
So $\chi^{(-m)}_{J,K}$ is a cut-off in a conic neighborhood of $\xi_0$ and is supported in a cone of directions near $\xi_0$. In particular we have $\chi^{(-m)}_{J,K} \# p_\varepsilon = 0$ for $|\xi| \leq K$. We now choose $J, K>0$ such that
\begin{equation*}
\text{supp} (\chi^{(-m)}_{J,K}) \subset U \times \Gamma_{\infty \xi_0}.  
\end{equation*}
Then by the $L^2$-boundedness theorem of Calderón-Vaillancourt for symbols of class $S^0_{0,0}$, see \cite[Chapter 13, Theorem 1.3]{Taylor:81}, there are constants $j_0, j_1 \in \mathbb{N}$ and $C > 0$ each of which depend on $n$ but independent of $\varepsilon > 0$ such that
\begin{equation*}
\begin{aligned}
\lVert {\mathop{\rm OP}}_{\psi,\varepsilon} (\chi^{(-m)}_{J,K}) P_{\psi,\varepsilon} u_{\varepsilon} \rVert_{L^2} &= \lVert {\mathop{\rm OP}}_{\psi,\varepsilon}(\chi^{(-m)}_{J,K} \#_{_2} p_\varepsilon) u_\varepsilon \rVert_{L^2}\leq \\
&\leq C \sup_{|\alpha| \leq j_0, |\beta| \leq j_1} \lVert \partial^{\alpha}_{\xi} \partial_{x}^{\beta} (\chi^{(-m)}_{J,K} \#_{_2} p_\varepsilon) \rVert_{L^\infty(\mathbb{R}^{2n})} \lVert u_\varepsilon \rVert_{L^2}
\end{aligned}
\end{equation*}
where in the last step we performed the rescaling by means of the asymptotic expansion of the second kind for the composition formula of the pseudodifferential operators expressed by using the notation $\#_{_2}$, see Subsection~\ref{subsec:AE2}. Moreover we used the following estimation
\begin{multline*}
|\partial^{\alpha}_\xi \partial_x^{\beta} (\chi^{(-m)}_{J,K} \#_{_2} p_\varepsilon  (x,\varepsilon \xi))| = |\varepsilon^{|\alpha|} (\partial^{\alpha}_\xi \partial_x^{\beta} \ \chi^{(-m)}_{J,K} \#_{_2} p_\varepsilon)(x,\varepsilon \xi)| \leq\\
\leq \varepsilon^{|\alpha|} \sup_{(x,\xi) \in \mathbb{R}^{2n}} |(\partial^{\alpha}_\xi \partial_x^{\beta} \ \chi^{(-m)}_{J,K} \#_{_2} p_\varepsilon)(x, \varepsilon \xi)| = \varepsilon^{|\alpha|} \lVert \partial^{\alpha}_\xi \partial_x^{\beta} \ \chi^{(-m)}_{J,K} \#_{_2} p_\varepsilon \rVert_{L^\infty (\mathbb{R}^{2n})}.
\end{multline*}
Now using the fact that $(p_\varepsilon)_\varepsilon \in \mathcal{N}_{{\displaystyle\mathcal{S}}^m} (U \times \Gamma_{\xi_0})$ one has for every $q \in \mathbb{N}$ that
\begin{equation*}
\sup_{|\alpha| \leq j_0, |\beta| \leq j_1} \lVert \partial^{\alpha}_{\xi} \partial_{x}^{\beta} (\chi^{(-m)}_{J,K} \#_{_2} p_\varepsilon) \rVert_{L^\infty(\mathbb{R}^{2n})} = \mathcal O (\varepsilon^q) \qquad \text{ as } \varepsilon \to 0
\end{equation*}
by Proposition~\ref{prop:Product} and we deduce that $(x_0,\infty \xi_0) \notin {\mathop{\rm WF}}^{\rm i}(P_\psi u)$ which completes the first part of the proof. 

We continue the proof by showing the following inclusion:

\vspace{10pt}
\noindent \makebox[0.5in][l]{\textbf{Claim.}} \makebox[4.3in][c]{${\mathop{\rm WF}}^{\rm i} (P_\psi u ) \subset {\mathop{\rm WF}}^{\rm i}(u).$}

\noindent \emph{Proof of Claim.}
We take $(x_0,\infty \xi_0) \notin {\mathop{\rm WF}}^{\rm i}(u)$ and let $U$ be some open neighborhood of $x_0$ and $\Gamma_{\xi_0}$ a conic neighborhood of $\xi_0$. Then, for some $j \in \mathbb{R}$ there exists $\chi_{j} := (\chi_{j,\varepsilon})_\varepsilon \in \MphgSym[j,0]{0,0}$ elliptic at $(x_0,\infty \xi_0)$ such that for some constant $K_1 > 0$ we have
\begin{equation}\label{InftyEll}
\exists C > 0 \ \exists \eta \in (0,1]: \qquad |\chi_{j,\varepsilon}(x,\xi)| \geq C \langle \xi \rangle^j \qquad \forall \varepsilon \in (0,\eta]
\end{equation}
uniformly on $U \times \Gamma_{\infty {\xi_0}}$ with $\Gamma_{\infty {\xi_0}}=\Gamma_{\xi_0} \cap \{(x,\xi)  :  |\xi| \geq K_1 \}$ and
\begin{equation*}
\forall q \in \mathbb{N}: \quad \lVert {\mathop{\rm OP}}_{\psi,\varepsilon}(\chi_{j,\varepsilon}) u_\varepsilon \rVert_{L^2}  =  \mathcal O (\varepsilon^q) \quad \text{as } \varepsilon \to 0 .
\end{equation*}
Then the assumptions in Lemma~\ref{lem:psiparametrix} from the next section are satisfied on $U \times \Gamma_{\xi_0}$ and there is a $\widetilde{\chi}_{-j}:= (\widetilde{\chi}_{-j,\varepsilon})_\varepsilon \in \MphgSym[-j,0]{0,0}$ such that
\begin{equation*}
{\mathop{\rm OP}}_{\psi} (\widetilde{\chi}_{-j}) {\mathop{\rm OP}}_\psi (\chi_{j}) = I_{\psi} + R_{\psi} \qquad \text{ on } U \times \Gamma_{\xi_0}
\end{equation*}
where the generalized symbol $(r_\varepsilon)_\varepsilon$ of $R_\psi$ is in $\MSym[-N,N]{0,0}$ on $U \times \Gamma_{\xi_0}$ for every $N \in \mathbb{N}$.

As in (\ref{ConstructSymb}) one constructs a generalized symbol $(\kappa_{j-m})_\varepsilon \in \MphgSym[j-m,0]{0,0}$ elliptic at $(x_0, \infty \xi_0)$ such that the $\text{supp}(\kappa_{j-m})$ is contained in the set where (\ref{InftyEll}) is valid. We then write
\begin{equation*}
{\mathop{\rm OP}}_{\psi,\varepsilon} (\kappa_{j-m} \#_{_2} p_\varepsilon) = {\mathop{\rm OP}}_{\psi,\varepsilon} (\kappa_{j-m} \#_{_2} p_\varepsilon \#_{_2} \widetilde{\chi}_{-j,\varepsilon}) {\mathop{\rm OP}}_{\psi,\varepsilon} (\chi_{j,\varepsilon}) - {\mathop{\rm OP}}_{\psi,\varepsilon} (\kappa_{j-m} \#_{_2} p_\varepsilon \#_{_2} r_\varepsilon).
\end{equation*}
Concerning the first term on the right hand side we rescale and obtain by the Calderón-Vaillancourt theorem that there are constants $j_0,j_1 \in \mathbb{N} $ and $C >0$ such that
\begin{multline*}
\lVert {\mathop{\rm OP}}_{\psi,\varepsilon} (\kappa_{j-m} \#_{_2} p_\varepsilon \#_{_2} \widetilde{\chi}_{-j,\varepsilon}) {\mathop{\rm OP}}_{\psi,\varepsilon} (\chi_{j,\varepsilon}) u_\varepsilon \rVert_{L^2} \leq \\
 \leq C \sup_{|\alpha| \leq j_0, |\beta| \leq j_1} \lVert \partial_{\xi}^{\alpha} \partial_x^{\beta} \kappa_{j-m} \#_{_2} p_\varepsilon \#_{_2} \widetilde{\chi}_{-j,\varepsilon}) \lVert_{L^{\infty}(\mathbb{R}^{2n})} \lVert {\mathop{\rm OP}}_{\psi,\varepsilon} (\chi_{j,\varepsilon}) u_\varepsilon \rVert_{L^2}
\end{multline*}
and the latter expression is $\mathcal{O}(\varepsilon^q)$ as $\varepsilon \to 0$ for every $q \in \mathbb{N}$ by assumption.
Similarly we obtain for every $q \in \mathbb{N}$:
\begin{eqnarray*}
\lVert {\mathop{\rm OP}}_{\psi,\varepsilon} (\kappa_{j-m} \#_{_2} p_\varepsilon \#_{_2} r_\varepsilon) u_\varepsilon \rVert_{L^2} = \mathcal{O} (\varepsilon^q) \quad \text{as } \varepsilon \to 0
\end{eqnarray*}
since $(r_\varepsilon)_\varepsilon \in \mathcal{N}_{{\displaystyle\mathcal{S}}^0}$ on the support of $\kappa_{j-m}$. Therefore $(x_0, \infty \xi_0) \notin {\mathop{\rm WF}}^{\rm i} (P_\psi u)$.
\end{proof}
As a next result we reformulate the infinite wave front set in terms of the Fourier transform of a localized function.
\begin{prop}
Let $u \in \mathcal{G}_{\rm c}(\mathbb{R}^n)$. If
$(x_0,\infty \xi_0) \notin {\mathop{\rm WF}}^{\rm i} (u)$ then there exists $\gamma \in \mathcal{C}^{\infty}_{\rm c}(\mathbb{R}^n)$ with $\gamma(x_0) \neq 0$ and a conic neighborhood $\Gamma$ of $\xi_0$ such that $\forall N,q \in \mathbb{N}$, $\exists C >0$ satisfying
\begin{equation}\label{EquivIWF}
\lvert \mathcal{F}(\gamma u_\varepsilon) (\xi) \rvert \leq C \varepsilon^q \langle \xi \rangle^{-N} \quad \text{ as } \varepsilon \to 0
\end{equation}
for all $\xi \in \Gamma$ with $|\xi| \geq K/\varepsilon$ for some $K > 0$ independent of $\varepsilon$.
\end{prop}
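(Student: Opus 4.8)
\emph{Proof idea.}
The plan is to run the semiclassical counterpart of the classical characterisation of the wave front set through decay of a Fourier localisation, the twist being that, after the rescaling $\xi\mapsto\varepsilon\xi$, a standard frequency cut-off living on $|\xi|\gtrsim\varepsilon^{-1}$ corresponds to a cut-off in the $\psi$-calculus. First one reads off the hypothesis: by the definition of ${\mathop{\rm WF}}^{\rm i}$, the Remark following it and \cite[Proposition~3.4]{Garetto:05b}, there are $j\in\mathbb{R}$, a conic neighbourhood $W$ of $(x_0,\infty\xi_0)$ of the form $\{x\in U,\ \xi/|\xi|\in\Gamma_{\xi_0},\ |\xi|\ge K_1\}$, and a symbol $\chi=(\chi_\varepsilon)_\varepsilon\in\MphgSym[j,0]{0,0}$ which is elliptic at infinity on $W$ and satisfies $\lVert{\mathop{\rm OP}}_{\psi,\varepsilon}(\chi_\varepsilon)u_\varepsilon\rVert_{H^s}=\mathcal{O}(\varepsilon^q)$ for all $s,q$; fix a representative $(u_\varepsilon)_\varepsilon\in\mathcal{M}_{H^\infty}$ of $u$, supported in a fixed compact set.

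The second step bootstraps this single estimate to all orders. Using Lemma~\ref{lem:psiparametrix} one builds $\widetilde\chi_{-j}\in\MphgSym[-j,0]{0,0}$ with ${\mathop{\rm OP}}_\psi(\widetilde\chi_{-j}){\mathop{\rm OP}}_\psi(\chi)=I_\psi+R_\psi$ on $W$, the symbol $r_\varepsilon$ of $R_\psi$ being negligible on $W$. I would then show: for every $m$ and every $(\zeta_\varepsilon)_\varepsilon\in\MSym[m,0]{0,0}$ with ${\mathop{\rm supp}}\zeta_\varepsilon\subset W$ one has $\lVert{\mathop{\rm OP}}_{\psi,\varepsilon}(\zeta_\varepsilon)u_\varepsilon\rVert_{L^2}=\mathcal{O}(\varepsilon^q)$ for all $q$. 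Indeed, Proposition~\ref{prop:Product} and the parametrix relation give $\zeta_\varepsilon\#\widetilde\chi_{-j}\#\chi_\varepsilon=\zeta_\varepsilon+n_\varepsilon$ with $(n_\varepsilon)_\varepsilon\in\mathcal{N}_{{\displaystyle\mathcal{S}}^{m}}$ (since $r_\varepsilon$ is negligible on ${\mathop{\rm supp}}\zeta_\varepsilon$), whence
\begin{equation*}
{\mathop{\rm OP}}_{\psi,\varepsilon}(\zeta_\varepsilon)u_\varepsilon={\mathop{\rm OP}}_{\psi,\varepsilon}(\zeta_\varepsilon\#\widetilde\chi_{-j})\,{\mathop{\rm OP}}_{\psi,\varepsilon}(\chi_\varepsilon)u_\varepsilon-{\mathop{\rm OP}}_{\psi,\varepsilon}(n_\varepsilon)u_\varepsilon .
\end{equation*}
The last term is negligible since $(u_\varepsilon)_\varepsilon$ is moderate, while $\zeta_\varepsilon\#\widetilde\chi_{-j}$ is supported, modulo a negligible symbol, in the frequency band of ${\mathop{\rm supp}}\zeta_\varepsilon$, so after rescaling (asymptotic expansion of the second kind) its symbol equals $\varepsilon^{m-j}$ times an $S^{m-j}_{1,0}$-symbol with only slow-scale seminorm growth; thus ${\mathop{\rm OP}}_{\psi,\varepsilon}(\zeta_\varepsilon\#\widetilde\chi_{-j})$ maps $H^{\max(m-j,0)}\to L^2$ with slow-scale norm, and the estimate follows from Step~1 because $q$ is arbitrary.

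For the third step I would fix $\gamma\in\mathcal{C}^\infty_{\rm c}$ with $\gamma(x_0)\ne0$ and small support, a narrow open cone $\Gamma\ni\xi_0$, a constant $K>0$, and a cut-off $\kappa(x,\xi)=\phi_1(x)\phi_2(\xi/|\xi|)\phi_3(\xi)$ with $(\kappa)_\varepsilon\in\MSym[0,0]{0,0}$, ${\mathop{\rm supp}}\kappa\subset W$, and $\kappa\equiv1$ on a conic set large enough that $\varepsilon\eta\in\{\kappa=1\}$ whenever $x\in{\mathop{\rm supp}}\gamma$, $\xi\in\Gamma$, $|\xi|\ge K/\varepsilon$ and $|\eta-\xi|\le c|\xi|$ (with $c\in(0,1)$ small and fixed). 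Then split $\mathcal{F}(\gamma u_\varepsilon)=A_\varepsilon+B_\varepsilon$ with $A_\varepsilon=\mathcal{F}(\gamma\,{\mathop{\rm OP}}_{\psi,\varepsilon}(\kappa)u_\varepsilon)$ and $B_\varepsilon=\mathcal{F}(\gamma\,{\mathop{\rm OP}}_{\psi,\varepsilon}(1-\kappa)u_\varepsilon)$. For $A_\varepsilon$: writing $\langle\varepsilon D\rangle={\mathop{\rm OP}}_\psi(\langle\xi\rangle)$, a growth-type-$0$ $\psi$-operator, one has $\langle\varepsilon D\rangle^M(\gamma\,{\mathop{\rm OP}}_{\psi,\varepsilon}(\kappa)u_\varepsilon)={\mathop{\rm OP}}_{\psi,\varepsilon}(\langle\xi\rangle^M\#\gamma\#\kappa)u_\varepsilon$, the composite symbol being in $\MSym[M,0]{0,0}$ and supported, modulo negligible, in $W$; so Step~2 yields $\lVert\langle\varepsilon D\rangle^M(\gamma\,{\mathop{\rm OP}}_{\psi,\varepsilon}(\kappa)u_\varepsilon)\rVert_{L^2}=\mathcal{O}(\varepsilon^q)$ for all $M,q$, and pairing against $e^{-i\langle\cdot,\xi\rangle}$ (after multiplying by a fixed cut-off $\equiv1$ on ${\mathop{\rm supp}}\gamma$) together with Peetre's inequality for the weight $\langle\varepsilon\,\cdot\,\rangle$ gives $|A_\varepsilon(\xi)|\le C_M\varepsilon^q\langle\varepsilon\xi\rangle^{-M}$, which on $|\xi|\ge K/\varepsilon$ is $\le C\varepsilon^q\langle\xi\rangle^{-N}$. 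For $B_\varepsilon$: a direct computation gives $B_\varepsilon(\xi)=(2\pi)^{-n}\int\widehat{c(\cdot,\varepsilon\eta)}(\xi-\eta)\,\hat u_\varepsilon(\eta)\,d\eta$ with $c(x,\eta)=\gamma(x)(1-\kappa(x,\eta))$; here $\widehat{c(\cdot,\varepsilon\eta)}(\zeta)$ decays rapidly in $\zeta$ uniformly in $\varepsilon,\eta$ and vanishes once $\varepsilon\eta\in\{\kappa=1\}$, so for $\xi\in\Gamma$, $|\xi|\ge K/\varepsilon$ the integrand is supported in $|\eta-\xi|>c|\xi|$, where $\langle\xi-\eta\rangle\gtrsim\varepsilon^{-1}$ and $\langle\xi-\eta\rangle\gtrsim\langle\xi\rangle$; distributing these powers and using $\lVert\hat u_\varepsilon\rVert_{L^2}=\mathcal{O}(\varepsilon^{-N_0})$ yields $|B_\varepsilon(\xi)|\le C\varepsilon^q\langle\xi\rangle^{-N}$. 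Adding the two estimates proves (\ref{EquivIWF}) with $\gamma,\Gamma,K$ as chosen.

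The step I expect to be the main obstacle is Step~2 together with its use in the $A_\varepsilon$-estimate: the hypothesis provides only one microlocal bound of a fixed order, and one must propagate it to symbols of arbitrary order while controlling the negative powers of $\varepsilon$ introduced by the semiclassical rescaling — exactly the ``worse $\varepsilon$-asymptotics'' noted in the introduction. What makes it go through is that composition with the parametrix, and with the weight $\langle\varepsilon D\rangle^M$, is $L^2$-bounded with only slow-scale operator norm on the frequency band $|\xi|\gtrsim\varepsilon^{-1}$ where the cut-offs live, so that any factor $\varepsilon^{-M}$ is absorbed by the arbitrarily high negligibility of ${\mathop{\rm OP}}_{\psi,\varepsilon}(\chi_\varepsilon)u_\varepsilon$; this is precisely why the asymptotic expansion of the second kind of Subsection~\ref{subsec:AE2} is needed here.
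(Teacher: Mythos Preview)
Your strategy is correct and would ultimately yield the statement, but it takes a substantially longer path than the paper's proof, and a couple of your justifications need sharpening.

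The paper's argument is very short: from $(x_0,\infty\xi_0)\notin{\mathop{\rm WF}}^{\rm i}(u)$ it fixes $\chi\in\MphgSym[-m,0]{0,0}$ elliptic at $(x_0,\infty\xi_0)$ with ${\mathop{\rm OP}}_\psi(\chi)u=0$ in $\mathcal{G}_{2,2}$, then takes $\gamma\in\mathcal{C}^\infty_{\rm c}$ with $\gamma(x_0)\neq0$ and a frequency cut-off $\phi\in S^0$ supported in $\Gamma_{\infty\xi_0}$, and writes a single parametrix identity $\phi_\psi(D)\gamma(x)=A_\psi{\mathop{\rm OP}}_\psi(\chi)+R_\psi$ with $A_\psi$ of order $m$ and $R_\psi$ negligible. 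This gives $\phi_\psi(D)\gamma u=0$ in $\mathcal{G}_{2,2}$. The decisive shortcut is then to observe that $\gamma u$, hence $\phi_\psi(D)\gamma u$, has a representative in $\mathcal{M}_{\mathscr{S}}$; invoking the embedding $\mathcal{G}_{\mathscr{S}}\hookrightarrow\mathcal{G}_{2,2}$ of \cite[Proposition 3.5]{Garetto:05b} one gets $\phi_\psi(D)\gamma u=0$ in $\mathcal{G}_{\mathscr{S}}$, so that $(\phi(\varepsilon\xi)\widehat{\gamma u}_\varepsilon(\xi))_\varepsilon\in\mathcal{N}_{\mathscr{S}}$, which \emph{is} the weighted $L^\infty$ decay~(\ref{EquivIWF}) after undoing the scaling.

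Compared with this, your three-step programme reproves much of that machinery by hand. Your Step~2 is essentially the ``Claim'' inside the proof of Theorem~\ref{thm:InftyWF}, so it is sound; note however that for it to go through you must use that ${\mathop{\rm OP}}_\psi(\chi)u$ is negligible in $H^s$ for \emph{every} $s$, and that a $\psi$-operator of order $s\ge0$ maps $H^s\to L^2$ with slow-scale norm (factor out $\langle\varepsilon D\rangle^s$ and apply Calder\'on--Vaillancourt to the order-$0$ remainder). In Step~3, your $A_\varepsilon$-estimate needs a bit more than ``pairing against $e^{-i\langle\cdot,\xi\rangle}$ and Peetre'': the passage from an $L^2$ bound on $\langle\varepsilon D\rangle^{2M}\bigl(\gamma\,{\mathop{\rm OP}}_\psi(\kappa)u_\varepsilon\bigr)$ to a pointwise bound on its Fourier transform relies on the fact that $(1-\varepsilon^2\Delta)^M$ is a differential operator, hence preserves the compact support of $\gamma$, so $L^1\lesssim L^2$ on that support; only then does integration by parts give $|A_\varepsilon(\xi)|\le C\varepsilon^q\langle\varepsilon\xi\rangle^{-2M}$. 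The $B_\varepsilon$-estimate is fine. What your approach buys is that it never leaves $\mathcal{G}_{2,2}$ and is entirely explicit; what the paper's approach buys is brevity, by packaging the $L^2\to L^\infty$ step and the weighted decay into the single statement that the Fourier transform is an isomorphism on $\mathcal{G}_{\mathscr{S}}$.
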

Note that our notion of regularity is derived from the infinite wave front set and therefore differs from \cite[Section 6]{HoMoe:04} and \cite[Sections 2,3]{GaHo:05}. It is also different to \cite[Section 3.2.2]{NPS:98} if one replaces in the Definitions 3.13 and 3.14 the condition of rapid decrease in each derivative by simple rapid decrease.
\begin{proof}
The following proof is similar to \cite[Proposition 7.4]{GiSj:94}. First note that $\mathcal{G}_{\rm c} \subset \mathcal{G}_{2,2}$. Since $(x_0,\infty \xi_0) \notin {\mathop{\rm WF}}^{\rm i}(u)$ we can find a $\chi:=\bigl(\chi_{\varepsilon}\bigr)_\varepsilon \in \MphgSym[-m,0]{0,0}$ elliptic at $(x_0,\infty \xi_0)$ such that ${\mathop{\rm OP}}_\psi (\chi) u = 0$ in $\mathcal{G}_{2,2}$ for some $m \in \mathbb{R}$. Furthermore we let $\gamma \in \mathcal{C}^{\infty}_{\rm c} (\mathbb{R}^n)$ with $\gamma (x_0) \neq 0$. Then there exists a symbol $\phi \in S^0 (\mathbb{R}^n), \text{supp}(\phi) \subset \Gamma_{\infty \xi_0}= \Gamma_{\xi_0} \cap \{ \xi: |\xi| \geq K \}$ for some $K>0$ and $\phi(t \xi) = 1$ for $t \geq 1, \xi \in \Gamma_{\infty \xi_0}$ so that 
\begin{equation*}
\phi_\psi (D) \gamma (x) = A_\psi {\mathop{\rm OP}}_\psi(\chi) + R_\psi
\end{equation*}
where $A_\psi$ and $R_\psi$ is a $\psi$-pseudodifferential operator with generalized symbol in $\MSym[m,0]{0,0}$ and $\mathcal{N}_{{\displaystyle\mathcal{S}}^0}$, respectively. Then by assumption we deduce that $\phi_\psi (D) \gamma u = 0$ in $\mathcal{G}_{2,2}$. In particular we deduce that $\phi_\psi (D) \gamma u = 0$ in $\mathcal{G}_{\mathscr{S}}$ since $\mathcal{G}_{\mathscr{S}} \hookrightarrow \mathcal{G}_{2,2}$, see \cite[Proposition 3.5]{Garetto:05b}. Using the fact that the Fourier transform is an isomorphism on $\mathcal{G}_{\mathscr{S}}$ we obtain $\xi \mapsto (\phi(\varepsilon \xi) \widehat{\gamma u}_\varepsilon (\xi))_\varepsilon$ is in $\mathcal{N}_{\mathscr{S}}$ under consideration of the scaling. Hence we also have
\begin{equation*}
\forall N \in \mathbb{N} \ \forall q \in \mathbb{N}: \quad \lVert \langle \xi / \varepsilon \rangle^N \phi (\xi) \widehat{\gamma u}_\varepsilon (\xi / \varepsilon) \rVert_{L^{\infty}} = \mathcal{O} (\varepsilon^q)
\end{equation*}
showing (\ref{EquivIWF}).
\end{proof}
We also study the behavior of the infinite wave front set of a function under the action of $\psi$-pseudodifferential operators that are elliptic at infinity.
\begin{thm}
Let $P_\psi$ be a $\psi$-pseudodifferential operator with symbol $p:=(p_\varepsilon)_\varepsilon$ in $\MphgSym[m,0]{\nu,0}$ that is elliptic at $(x_0, \infty \xi_0) \in T^* \mathbb{R}^n \setminus 0$. Then the following holds:
\begin{equation*}
{\mathop{\rm WF}}^{\rm i} (u) \subset {\mathop{\rm WF}}^{\rm i}(P_\psi u) \cup {\mathop{\rm Ell}}^{\rm i}(p)^{\rm c}.
\end{equation*}
\end{thm}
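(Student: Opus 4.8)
The plan is to prove this statement exactly as one proves elliptic regularity in the classical pseudodifferential calculus, using the parametrix construction that the paper postpones to Lemma~\ref{lem:psiparametrix} together with Theorem~\ref{thm:InftyWF}. So I would first fix $(x_0,\infty\xi_0)\notin{\mathop{\rm WF}}^{\rm i}(P_\psi u)$ with $(x_0,\xi_0)\in{\mathop{\rm Ell}}^{\rm i}(p)$ — otherwise there is nothing to prove, since then $(x_0,\xi_0)\in{\mathop{\rm Ell}}^{\rm i}(p)^{\rm c}$ and the point lies in the right-hand side automatically. The goal is then to produce a symbol elliptic at $(x_0,\infty\xi_0)$ whose $\psi$-quantization annihilates $u$ in $\mathcal{G}_{2,2}$.

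The key steps, in order: (1) Since $p$ is elliptic at infinity at $(x_0,\xi_0)$, there is an open $U\ni x_0$ and a conic neighborhood $\Gamma_{\xi_0}$ and constants $C,K_1>0$ with $|p_\varepsilon(x,\xi)|\geq C\langle\xi\rangle^m$ on $U\times\Gamma_{\xi_0}\cap\{|\xi|\geq K_1\}$ for $\varepsilon$ small. As in (\ref{ConstructSymb}) cut this off to a polyhomogeneous symbol $(\kappa_\varepsilon)_\varepsilon\in\MphgSym[m,0]{\nu,0}$ elliptic at $(x_0,\infty\xi_0)$ whose support lies in the region of ellipticity. (2) Invoke Lemma~\ref{lem:psiparametrix} (the parametrix construction, whose hypotheses are met on $U\times\Gamma_{\xi_0}$ because of step (1)) to get $(\widetilde{p}_\varepsilon)_\varepsilon\in\MphgSym[-m,0]{\nu,0}$ with ${\mathop{\rm OP}}_\psi(\widetilde{p})\,{\mathop{\rm OP}}_\psi(\kappa)=I_\psi+R_\psi$ on $U\times\Gamma_{\xi_0}$, where the symbol of $R_\psi$ is in $\MSym[-N,N]{\nu,0}$ on $U\times\Gamma_{\xi_0}$ for every $N$. (3) Pick, again via (\ref{ConstructSymb}), a symbol $(\chi_\varepsilon)_\varepsilon\in\MphgSym[0,0]{0,0}$ elliptic at $(x_0,\infty\xi_0)$ with support so small that $\chi_\varepsilon\#_{_2}(I+R_\varepsilon)$ and $\chi_\varepsilon\#_{_2}\widetilde p_\varepsilon$ only see the region where step (2) holds. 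Then write, on the symbol level and modulo the above,
\begin{equation*}
{\mathop{\rm OP}}_\psi(\chi)u=\chi_\psi(I_\psi+R_\psi)u-\chi_\psi R_\psi u={\mathop{\rm OP}}_\psi(\chi\#_{_2}\widetilde p)\,{\mathop{\rm OP}}_\psi(\kappa)\,P_\psi u-{\mathop{\rm OP}}_\psi(\chi\#_{_2}r)u
\end{equation*}
after commuting $\kappa_\psi$ through and using that $\kappa$ is elliptic of order $m$ so that $\kappa\#_{_2}p$ differs from something elliptic by a negligible symbol on the support of $\chi$. (4) Estimate the two terms in $L^2$ after rescaling, exactly as in the proof of Theorem~\ref{thm:InftyWF}: the Calder{\'o}n--Vaillancourt theorem for $S^0_{0,0}$ bounds the operator norms by finitely many $L^\infty$ seminorms of the (rescaled) composed symbols, and these are $\mathcal O(\varepsilon^q)$ for all $q$ because ${\mathop{\rm OP}}_{\psi,\varepsilon}(\kappa)P_{\psi,\varepsilon}u_\varepsilon$ is negligible by hypothesis (as $(x_0,\infty\xi_0)\notin{\mathop{\rm WF}}^{\rm i}(P_\psi u)$ and $\kappa$ is supported near $(x_0,\xi_0)$), while the second term is negligible because $(r_\varepsilon)_\varepsilon\in\mathcal{N}_{{\displaystyle\mathcal{S}}^0}$ on the support of $\chi$. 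Conclude ${\mathop{\rm OP}}_\psi(\chi)u=0$ in $\mathcal{G}_{2,2}$, hence $(x_0,\infty\xi_0)\notin{\mathop{\rm WF}}^{\rm i}(u)$, which is the contrapositive of the claim.

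The main obstacle I anticipate is the careful bookkeeping in step (3)--(4): one must arrange all the cut-offs so that every composition $\#_{_2}$ takes place in a single conic region where the parametrix identity, the ellipticity of $p$, and the negligibility of $r$ all hold simultaneously, and one must track that the rescaling $(x,\xi)\mapsto(\cdot)(x,\varepsilon\xi)$ keeps the relevant seminorms controlled — this is precisely why the asymptotic expansion of the second kind (Subsection~\ref{subsec:AE2}) was introduced, so I would lean on that invariance rather than on Definition~\ref{defn:AE1}. A secondary subtlety is justifying that $\kappa\#_{_2}p$ is ``elliptic modulo negligible'' on the support of $\chi$ so that one can legitimately move $P_\psi u$ to the right and recover ${\mathop{\rm OP}}_\psi(\kappa)P_\psi u$; this follows from Proposition~\ref{prop:Product} together with the ellipticity estimate, but needs to be spelled out. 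Everything else is a routine adaptation of the argument already carried out for Theorem~\ref{thm:InftyWF}.
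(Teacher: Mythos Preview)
Your overall strategy---build a microlocal parametrix and then feed it into the machinery of Theorem~\ref{thm:InftyWF}---is exactly what the paper does. But there is a concrete error in steps (2)--(3): you build $\widetilde p$ as a parametrix for the \emph{cutoff} $\kappa$, writing ${\mathop{\rm OP}}_\psi(\widetilde p)\,{\mathop{\rm OP}}_\psi(\kappa)=I_\psi+R_\psi$, and then claim the displayed identity
\[
{\mathop{\rm OP}}_\psi(\chi)u={\mathop{\rm OP}}_\psi(\chi\#_{_2}\widetilde p)\,{\mathop{\rm OP}}_\psi(\kappa)\,P_\psi u-{\mathop{\rm OP}}_\psi(\chi\#_{_2}r)u.
\]
This does not follow: the relation $\widetilde p\#_{_2}\kappa=1+r$ involves $\kappa$, not $p$, so substituting it into $\chi_\psi u$ produces $\chi_\psi\widetilde p_\psi\kappa_\psi u$, with no $P_\psi$ anywhere. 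Your remark about ``commuting $\kappa_\psi$ through'' and $\kappa\#_{_2}p$ being elliptic does not repair this; no amount of commuting turns $\kappa_\psi u$ into $\kappa_\psi P_\psi u$. Separately, the assertion in step (4) that $\kappa_\psi P_\psi u$ is negligible ``because $\kappa$ is supported near $(x_0,\xi_0)$'' is not automatic from the definition of ${\mathop{\rm WF}}^{\rm i}$: the hypothesis only hands you \emph{some} test symbol, and showing that your chosen $\kappa$ also works requires a further parametrix argument (this is precisely the content of the Claim inside the proof of Theorem~\ref{thm:InftyWF}).

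The fix, and the paper's actual argument, is simpler than what you propose. Apply Lemma~\ref{lem:psiparametrix} directly to $P_\psi$ on $U\times\Gamma_{\xi_0}$ (its hypotheses are met by the ellipticity of $p$ there) to obtain $Q_\psi$ with $I_\psi=Q_\psi P_\psi+R_\psi$ on $U\times\Gamma_{\xi_0}$, where the symbol of $R_\psi$ lies in $\mathcal{N}_{{\displaystyle\mathcal{S}}^0}$ on that set; in particular $(x_0,\infty\xi_0)\notin{\mathop{\rm conesupp}}^{\rm i}(r)$. Now invoke Theorem~\ref{thm:InftyWF} twice, as a black box: first ${\mathop{\rm WF}}^{\rm i}(Q_\psi P_\psi u)\subset{\mathop{\rm WF}}^{\rm i}(P_\psi u)$ gives $(x_0,\infty\xi_0)\notin{\mathop{\rm WF}}^{\rm i}(Q_\psi P_\psi u)$, and second ${\mathop{\rm WF}}^{\rm i}(R_\psi u)\subset{\mathop{\rm conesupp}}^{\rm i}(r)$ gives $(x_0,\infty\xi_0)\notin{\mathop{\rm WF}}^{\rm i}(R_\psi u)$. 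Since $u=Q_\psi P_\psi u+R_\psi u$ microlocally at $(x_0,\infty\xi_0)$, the conclusion follows. There is no need for the auxiliary cutoff $\kappa$, no need to redo the Calder\'on--Vaillancourt estimates, and no need to track $\#_{_2}$ compositions by hand---all of that is already packaged inside Theorem~\ref{thm:InftyWF}.
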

\begin{proof}
Suppose $(x_0, \infty \xi_0)$ is not contained in the right hand side of the claimed inclusion relation. Since $(x_0, \infty \xi_0) \in {\mathop{\rm Ell}}^{\rm i}(p)$ the symbol $(p_\varepsilon)_\varepsilon$ is elliptic at $(x_0,\infty \xi_0)$ and therefore there is an open neighborhood $U$ of $x_0$ and conic neighborhood $\Gamma_{\xi_0}$ containing $\xi_0$ such that
\begin{equation*}
|p_\varepsilon(x,\xi)| \geq C \langle \xi \rangle^m \qquad \forall (x,\xi) \in U \times \Gamma_{\infty \xi_0} \text{ as } \varepsilon \to 0.
\end{equation*}
By Lemma~\ref{lem:psiparametrix} from the next section there exists an approximative inverse $Q_\psi$ so that
\begin{equation}\label{IMLInverse}
I_\psi = Q_\psi P_\psi  + R_\psi  \qquad \text{on } U \times \Gamma_{\xi_0}
\end{equation}
and $(x_0,\infty \xi_0) \notin {\mathop{\rm conesupp}}^{\rm i} (r)$ and $r:= (r_\varepsilon)_\varepsilon$ is the generalized symbol of $R_\psi$. Hence $(x_0,\infty \xi_0) \notin {\mathop{\rm WF}}^{\rm i}(R_\psi u)$ by Theorem~\ref{thm:InftyWF}.

Furthermore, since also $(x_0,\infty \xi_0) \notin {\mathop{\rm WF}}^{\rm i} (P_\psi u)$ we obtain that $(x_0,\infty \xi_0) \notin {\mathop{\rm WF}}^{\rm i}(Q_\psi P_\psi u) \subset {\mathop{\rm WF}}^{\rm i}(P_\psi u)$ and by (\ref{IMLInverse}) we deduce that $(x_0,\infty \xi_0) \notin {\mathop{\rm WF}}^{\rm i}(u)$. 
\end{proof}
\subsection{Microlocal factorization}
In this section we use the notion of microlocal behavior at infinite points of a given generalized function in $\mathcal{G}_{2,2}(\mathbb{R}^n)$ to give a microlocal interpretation of Theorem~\ref{thm:Fact}. To do so let $I$ be a conic subset of $\mathbb{R}^n \times (\mathbb{R}^n \setminus 0)$. We then say that two generalized functions $u,v \in \mathcal{G}_{2,2}$ are microlocally equivalent at infinity on $I$ if and only if $\exists (\chi_\varepsilon)_\varepsilon \in \MphgSym[\infty,0]{0,0}$ elliptic at $(x_0,\infty \xi_0)$ for every $(x_0,\xi_0) \in I$ such that $\chi_\psi (u-v) = 0$ in $\mathcal{G}_{2,2}$. 

Similar to \cite{Stolk:04} we introduce a subset of the phase space associated to $I'_{\theta_1}$ which is given by
\begin{equation}\label{IMLRegion}
I_{\theta_1}:= \{ (t,y,\tau,\eta) \ | \ (y,\tau,\xi) \in I'_{\theta_1}, \ |\zeta| \leq \sqrt{c_1} |\tau| \}
\end{equation} 
and $c_1>0$ is the upper bound of the H\"{o}lder continuous coefficient $c(y)$. Note that in the following $I_{\theta_1}$ will serve as the adequate space-frequency domain on which we establish the microlocal factorization theorem at infinite points.

As already mentioned in the preceding section $L_{j,\psi}= (\partial_z + A_j(y,D_t,D_x))_\psi$, $j=1,2$ is not a $\psi$-pseudodifferential operator on $\mathbb{R}^{n+1}$. To overcome this we are going to introduce a microlocal cut-off $g_\psi(D_t,D_y)$ for $I_{\theta_1}$ such that
\begin{equation}\label{EquivIML}
g_\psi L_{j,\psi} u = L_{j,\psi} u \qquad \text{microlocally at infinity on } I_{\theta_1}
\end{equation}
where $u \in \mathcal{G}_{2,2}$ and $g_\psi L_{j,\psi}$ is a $\psi$-pseudodifferential operator with generalized symbol in $\MSym[1,0]{1,0}$. Here the microlocal cut-off $g_\psi$ is constructed in the following way.

First let $K_0, K_1$ be some fixed constants so that $0<K_0<K_1<\infty$. Then $g \in \mathcal{C}^{\infty} (\mathbb{R}^{n+1})$, $0 \leq g \leq 1$ is defined by
\begin{equation*}
g(\tau,\eta) := \begin{cases}
0, & |\zeta| \geq 3 \sqrt{c_1} |(\tau,\xi)| \text{ or } |(\tau,\xi)| \leq K_0\\
\tilde{\sigma} (\tau,\xi), & |\zeta| \leq 2 \sqrt{c_1} |(\tau,\xi)|\\
\frac{\tilde{\sigma} (\tau,\xi)}{1 + e^{\frac{1}{|\zeta|/|(\tau,\xi)| - 2\sqrt{c_1}} - \frac{1}{3\sqrt{c_1} - |\zeta|/|(\tau,\xi)|}}}, & 2\sqrt{c_1}|(\tau,\xi)| < |\zeta| < 3 \sqrt{c_1}|(\tau,\xi)|
\end{cases}
\end{equation*}
and the function $\tilde{\sigma}$ is a cut-off near the origin given by
\begin{equation*}
\tilde{\sigma}(\tau,\xi) := \begin{cases}
0, & |(\tau,\xi)| \leq K_0\\
1, & |(\tau,\xi)| \geq K_1\\
\frac{1}{{1 + e^{\frac{K_1-K_0}{|(\tau,\xi)| - K_0} - \frac{K_1-K_0}{K_1 - |(\tau,\xi)|}}}}, & K_0 < |(\tau,\xi)| < K_1.
\end{cases}
\end{equation*}
Then $g \in S^0(\mathbb{R}^{n+1})$ and have the following picture:\vspace{10pt}

\figinit{0.4cm}
\figpt 1: (0,0) \figpt 2: (-0.5,0) \figpt 3: (15,0) \figpt 4: (0,-0.5)
\figpt 5: (0,7) \figpt 6: (0,4) \figpt 7: (0,2) \figpt 8: (4.5,4)
\figpt 9: (15,4) \figpt 10: (6,4) \figvectP 11 [1,8] \figpt 12: (15,7)   
\figvectP 13 [1,12] \figpt 14: (2.7,5) \figvectP 15 [6,9] \figpt 16: (15,2)   
\figvectP 17 [7,16] \figptinterlines 18:[1,13;7,17] \figvectP 19 [5,12]
\figptinterlines 20:[1,11;5,19] \figpt 21: (10,2) \figptinterlines 22: [1,13;6,15] 
\figpt 23: (0,8) \figpt 24: (16,0) \figptinterlines 25:[22,13;5,19]
%
\psbeginfig{}
\psset(width=1,fillmode=yes,color=0.85)\psline[5,6,8,20,5]
\psset(width=1,fillmode=yes,color=0.95)\psline[1,7,18,12,3,1]
\psreset{first} \psarrow[2,24] \psarrow[4,23] \psline[6,22]
\psline[7,18] \psline[18,22] \psline[8,20] \psline[22,25]
\pssetdash{8} \psset (dash=6, width=0.2) \psline[1,8] \psline[1,18]
\psendfig
%
\figvisu{\figBoxA}{}
{
\figwritew 7:\footnotesize{$K_0$}(2pt) \figwritew 6:\footnotesize{$K_1$}(2pt)
\figwritese 20:\scriptsize{$2\sqrt{c_1}|(\tau,\xi)| = |\zeta|$}(1pt)
\figwritee 22:\scriptsize{$\hspace{10pt} 3\sqrt{c_1}|(\tau,\xi)| = |\zeta|$}(1pt)
\figwritesw 23:\footnotesize{$|(\tau,\xi)|$}(6pt) \figwritee 24:\footnotesize{$|\zeta|$}(2pt)
\figwriten 21:\footnotesize{$g = 0$}(0pt) \figwriten 14:\footnotesize{$g = 1$}(5pt)
}
\centerline{\box\figBoxA}

\vspace{10pt}
\noindent Moreover, $g_\psi L_{j,\psi}$ acts as a $\psi$-pseudodifferential operator in $(t,x,z)$ with generalized symbol in $\MSym[1,0]{1,0}$ which can be shown by an adaption of \cite[Theorem 18.1.35]{Hoermander:3}. 

Furthermore (\ref{EquivIML}) is satisfied, since
${\mathop{\rm conesupp}}^{\rm i}(g - 1) \cap I_{\theta_1} = \emptyset$. In the same manner one shows that $g_\psi L_{1,\psi} L_{2,\psi} = L_{1,\psi} L_{2,\psi}$ microlocally at infinity on $I_{\theta_1}$.

Summarizing the observations from above we obtain the following main theorem:
\begin{thm}\label{thm:IMLFact}
Let $L_\psi$ and $I_{\theta_1}$ be as in (\ref{GovOp}) and (\ref{IMLRegion}). Then the operator $L_\psi$ can be factorized into a product of two first-order $\psi$-pseudodifferential operators as follows
\begin{equation*}
L_\psi = L_{1,\psi} L_{2,\psi} \quad \text{microlocally at infinity on} \ I_{\theta_1}
\end{equation*}
where $L_{j,\psi} = \bigl(\partial_z + A_j\bigr)_\psi$ and $A_{j,\psi}$ is the $\psi$-pseudodifferential operator is as in Theorem~\ref{thm:Fact}.
\end{thm}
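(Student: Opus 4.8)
The plan is to combine the global factorization with remainder from Theorem~\ref{thm:Fact} with the microlocal cut-off machinery developed just before the statement, so that the two types of error --- the negligible remainder $R_\psi$ supported on $I'_{\theta_1}$ and the discrepancy between $L_{1,\psi}L_{2,\psi}$ and its honest $\psi$-pseudodifferential regularization $g_\psi L_{1,\psi}L_{2,\psi}$ --- both disappear from the microlocal point of view on $I_{\theta_1}$. First I would recall that by Theorem~\ref{thm:Fact} we have $L_\psi = L_{1,\psi}L_{2,\psi} + R_\psi$ on $I'_{\theta_1}$ with $R_\psi = \Gamma_{0,\psi} + \Gamma_{1,\psi}(\partial_z)_\psi$ and $(\gamma_{j,\varepsilon})_\varepsilon \in \mathcal{N}_{{\displaystyle\mathcal{S}}^{2-j}}$ on $I'_{\theta_1}$. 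Passing to the space-time-frequency region $I_{\theta_1}$ defined in (\ref{IMLRegion}), these negligible-on-$I'_{\theta_1}$ symbols remain negligible there, since the projection of $I_{\theta_1}$ onto the $(y,\tau,\xi)$-variables lands in $I'_{\theta_1}$ and the extra $\zeta$-variable is controlled by $|\zeta|\le\sqrt{c_1}|\tau|$; moreover the factor $(\partial_z)_\psi$ contributes only a symbol $i\zeta$ which is polynomially bounded on $I_{\theta_1}$, so $\Gamma_{1,\psi}(\partial_z)_\psi$ still has generalized symbol in $\mathcal{N}_{{\displaystyle\mathcal{S}}^{2}}$ there. Hence for every $(x_0,\xi_0)\in I_{\theta_1}$ one can, as in the construction (\ref{ConstructSymb}) used in the proof of Theorem~\ref{thm:InftyWF}, build a cut-off $\chi^{(0)}$ in $\MphgSym[\infty,0]{0,0}$ elliptic at $(x_0,\infty\xi_0)$ and supported inside $I_{\theta_1}$, and then $\chi^{(0)}_\psi R_\psi u = {\mathop{\rm OP}}_\psi(\chi^{(0)}\#_{_2} r_\varepsilon)u$ is negligible in $\mathcal{G}_{2,2}$ by the Calderón--Vaillancourt estimate after rescaling --- exactly the argument in the first half of the proof of Theorem~\ref{thm:InftyWF}.

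Next I would address the fact that $L_{1,\psi}L_{2,\psi}$ is not itself a $\psi$-pseudodifferential operator on $\mathbb{R}^{n+1}$, using the microlocal cut-off $g_\psi(D_t,D_y)$ constructed above with the property ${\mathop{\rm conesupp}}^{\rm i}(g-1)\cap I_{\theta_1}=\emptyset$. The discussion preceding the statement already records that $g_\psi L_{1,\psi}L_{2,\psi}$ acts as a genuine $\psi$-pseudodifferential operator in $(t,x,z)$ with generalized symbol in $\MSym[1,0]{1,0}$ (via an adaptation of \cite[Theorem 18.1.35]{Hoermander:3}), and that $g_\psi L_{1,\psi}L_{2,\psi} = L_{1,\psi}L_{2,\psi}$ microlocally at infinity on $I_{\theta_1}$. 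So on $I_{\theta_1}$ the operator $L_{1,\psi}L_{2,\psi}$ may be replaced by this well-defined regularization without changing anything microlocally. Combining the two reductions: for any $u\in\mathcal{G}_{2,2}$ and any $(x_0,\xi_0)\in I_{\theta_1}$, choose a single cut-off $\chi$ in $\MphgSym[\infty,0]{0,0}$ elliptic at $(x_0,\infty\xi_0)$ with support contained in the region where both $g\equiv 1$ and the symbols of $\Gamma_{0,\psi},\Gamma_{1,\psi}$ are negligible; then $\chi_\psi\bigl(L_\psi - L_{1,\psi}L_{2,\psi}\bigr)u = \chi_\psi R_\psi u + \chi_\psi\bigl(L_{1,\psi}L_{2,\psi} - g_\psi L_{1,\psi}L_{2,\psi}\bigr)u$, and both terms vanish in $\mathcal{G}_{2,2}$ by the preceding paragraph and the ${\mathop{\rm conesupp}}^{\rm i}$-property of $g-1$. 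This is precisely the assertion that $L_\psi = L_{1,\psi}L_{2,\psi}$ microlocally at infinity on $I_{\theta_1}$, with $A_{j,\psi}$ the operators from Theorem~\ref{thm:Fact}.

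The main obstacle I anticipate is bookkeeping the passage from the conic set $I'_{\theta_1}\subset\mathbb{R}^n\times(\mathbb{R}^n\setminus 0)$ (in the variables $(y,\tau,\xi)$) to the enlarged conic set $I_{\theta_1}\subset\mathbb{R}^{n+1}\times(\mathbb{R}^{n+1}\setminus 0)$ (in the variables $(t,y,\tau,\xi,\zeta)$): one has to check that the symbol estimates defining $\mathcal{N}_{{\displaystyle\mathcal{S}}^{2-j}}$ on $I'_{\theta_1}$ genuinely survive when the extra $\zeta$-direction is adjoined, that the bound $|\zeta|\le\sqrt{c_1}|\tau|$ keeps $\langle(\tau,\xi,\zeta)\rangle$ comparable to $\langle(\tau,\xi)\rangle$ on $I_{\theta_1}$, and that the cut-offs $\chi^{(0)}$ can be chosen with support simultaneously inside $I_{\theta_1}$, inside $\{g\equiv 1\}$, and inside the region where the factorization of Theorem~\ref{thm:Fact} is valid. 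Once these compatibilities are verified the argument is just an assembly of Theorems~\ref{thm:Fact} and~\ref{thm:InftyWF} together with the cut-off construction already in place, so there is no new analytic difficulty beyond the ones already handled in Section~\ref{sec:WFInfty}.
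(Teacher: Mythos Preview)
Your proposal is correct and follows essentially the same approach as the paper: the theorem is stated as a summary of the preceding discussion, which combines the factorization with negligible remainder from Theorem~\ref{thm:Fact} with the microlocal cut-off $g_\psi$ satisfying ${\mathop{\rm conesupp}}^{\rm i}(g-1)\cap I_{\theta_1}=\emptyset$, so that both the remainder $R_\psi$ and the discrepancy $L_{1,\psi}L_{2,\psi}-g_\psi L_{1,\psi}L_{2,\psi}$ vanish microlocally at infinity on $I_{\theta_1}$. You are simply more explicit than the paper about the bookkeeping in the passage from $I'_{\theta_1}$ to $I_{\theta_1}$ and about invoking the Calder\'on--Vaillancourt argument from Theorem~\ref{thm:InftyWF}, but there is no difference in substance.
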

\section{Microlocal Diagonalization for \texorpdfstring{$L_\psi$}{Lpsi}}\label{sec:Diag}
The main issue in this section is to diagonalize of the microlocal equation $L_\psi U = F$ using the refined factorization theorem~\ref{thm:IMLFact}. In detail, we will rewrite the equation
\begin{equation*}
L_{\psi} U = F \qquad \text{microlocally at infinity on } I_{\theta_1}
\end{equation*}
into an equivalent system of the form
\begin{equation*}
\big( \partial_z - i B_{\pm}(x,z,D_t,D_x) \big)_\psi u_{\pm}   =   f_{\pm}  \quad \text{microlocally at infinity on } I_{\theta_1}.
\end{equation*}
To show this we will discuss a different approach to the one given by Stolk in \cite{Stolk:04} as it turns out that the factorization theorem already contains all the ingredients for the diagonalization.
Let us begin with the following lemma which was already used in the previous section.
\begin{lem}\label{lem:psiparametrix}
Let $m\in \mathbb{R}$ and $P_\psi$ be a polyhomogeneous $\psi$-pseudodifferential operator whose generalized symbol is given by $(p_\varepsilon)_\varepsilon \dot{\sim} \sum_{j=0}^{\infty}( \varepsilon^j \overline{p}_{m-j,\varepsilon})_\varepsilon$ in $\MSym[m,0]{\nu,0}$ on $I'_{\theta_1}$ for some $(\overline{p}_{m-j,\varepsilon})_\varepsilon \in \MhgSym[m-j,0]{\nu,\nu j}$ on $I'_{\theta_1}$, $j \geq 0$. Furthermore suppose that the principal symbol $(\overline{p}_{m,\varepsilon})_\varepsilon$ satisfies an estimate of the form
\begin{equation}\label{StrongEll}
\exists C >0 \ \exists \eta \in (0,1] : \quad |\overline{p}_{m,\varepsilon}(y,\tau,\xi)| \geq C |(\tau,\xi)|^{m} \quad \text{on } I'_{\theta_1}, \ \varepsilon \in (0,\eta].
\end{equation}
Then there exists a $\psi$-pseudodifferential operator $Q_\psi$ with generalized symbol in $\MphgSym[-m,0]{\nu,0}$ such that the following is valid
\begin{equation}\label{psiparametrix}
Q_\psi P_\psi = I_\psi + R_\psi \qquad \text{ on } I'_{\theta_1},
\end{equation}
where $I$ is the identity and the generalized symbol of $R_\psi$ is in $\mathcal{N}_{{\displaystyle\mathcal{S}}^0}$ on $I'_{\theta_1}$. More precisely, the polyhomogeneous generalized symbol $(q_\varepsilon)_\varepsilon$ of $Q_\psi$ is written in terms of its asymptotic expansion as
\begin{equation}\label{AE:parametrix}
(q_\varepsilon)_\varepsilon \dot{\sim} \sum_{k \geq 0} (\varepsilon^k \overline{q}_{-m-k,\varepsilon})_\varepsilon \qquad \text{ in } \MSym[-m,0]{\nu,0} \text{ on } I'_{\theta_1}
\end{equation}
for some $(\overline{q}_{-m-k,\varepsilon})_\varepsilon \in \MhgSym[-m-k,0]{\nu,\nu k} \text{ on } I'_{\theta_1},\ k \geq 0$.
\end{lem}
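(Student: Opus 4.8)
The plan is to construct the polyhomogeneous symbol $(q_\varepsilon)_\varepsilon$ of $Q_\psi$ term by term, so that each successive term cancels the next-order contribution in the composition $q_\varepsilon \# p_\varepsilon$ on $I'_{\theta_1}$, and then to invoke the existence result for polyhomogeneous asymptotic expansions (Lemma~\ref{lem:AE1} together with Definition~\ref{defn:PhgSymb}) to produce an actual generalized symbol realizing that expansion. First I would set $\overline{q}_{-m,\varepsilon} := 1/\overline{p}_{m,\varepsilon}$ on $I'_{\theta_1}$; the strong ellipticity hypothesis (\ref{StrongEll}) guarantees this is well-defined and that $(\overline{q}_{-m,\varepsilon})_\varepsilon \in \MhgSym[-m,0]{\nu,0}$ on $I'_{\theta_1}$, since differentiating $1/\overline{p}_{m,\varepsilon}$ produces only the symbol seminorm losses already present in $(\overline{p}_{m,\varepsilon})_\varepsilon$ controlled by powers of $\omega_\varepsilon$. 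Using Proposition~\ref{prop:Product} (the composition formula, whose error terms on the restricted domain $I'_{\theta_1}$ are handled exactly as the $\Gamma$-symbols in the proof of Theorem~\ref{thm:Fact}), the product $\overline{q}_{-m,\varepsilon}\,p_\varepsilon$ has principal symbol $1$ on $I'_{\theta_1}$, and the remaining terms of $q_\varepsilon \# p_\varepsilon$ are of order $\leq -1$ with the corresponding growth-type and log-type shifts.

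Next I would run the standard recursion: assuming $\overline{q}_{-m,\varepsilon},\dots,\overline{q}_{-m-k+1,\varepsilon}$ have been chosen so that $\bigl(\sum_{j<k}\varepsilon^j \overline{q}_{-m-j,\varepsilon}\bigr)\# p_\varepsilon \equiv I$ modulo a symbol of order $-k$ (with growth type $k$ and log-type $\nu k$ on $I'_{\theta_1}$), I would define $\overline{q}_{-m-k,\varepsilon}$ to be $-1/\overline{p}_{m,\varepsilon}$ times the order-$(-k)$ principal part of that remainder, pulled back appropriately by the Leibniz terms $D_\xi^\alpha \overline{q}_{-m-j,\varepsilon}\,\partial_x^\alpha \overline{p}_{m-i,\varepsilon}$ with $j+i+|\alpha|=k$. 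A bookkeeping check — identical in structure to the uniformity conditions (\ref{UniformCond}) and to the estimates carried out for $(\overline{b}_{j,\varepsilon}^{(\mu)})_\varepsilon$ in Theorem~\ref{thm:Fact} — shows $(\overline{q}_{-m-k,\varepsilon})_\varepsilon \in \MhgSym[-m-k,0]{\nu,\nu k}$ on $I'_{\theta_1}$ and that the whole sequence satisfies the uniformity hypothesis of Lemma~\ref{lem:AE1}. Applying that lemma (in its polyhomogeneous form, smoothing off the zero section with the cut-off $1-\varphi$ as in Definition~\ref{defn:PhgSymb}) yields a generalized symbol $(q_\varepsilon)_\varepsilon \in \MphgSym[-m,0]{\nu,0}$ with $(q_\varepsilon)_\varepsilon \dot{\sim} \sum_{k\geq 0}(\varepsilon^k \overline{q}_{-m-k,\varepsilon})_\varepsilon$ on $I'_{\theta_1}$, which is (\ref{AE:parametrix}). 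By construction the composed symbol $q_\varepsilon \# p_\varepsilon$ then differs from the constant symbol $1$ by a symbol of order $-N$, growth type $N$ on $I'_{\theta_1}$ for every $N$; since $(\omega_\varepsilon)_\varepsilon$ is a slow scale net this remainder $(r_\varepsilon)_\varepsilon$ lies in $\mathcal{N}_{{\displaystyle\mathcal{S}}^0}$ on $I'_{\theta_1}$, giving (\ref{psiparametrix}) with $Q_\psi = {\mathop{\rm OP}}_\psi((q_\varepsilon)_\varepsilon)$.

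The main obstacle I anticipate is bookkeeping the two asymptotic scales simultaneously on the restricted, non-global domain $I'_{\theta_1}$: one must verify that inverting $\overline{p}_{m,\varepsilon}$ and iterating the recursion introduces at most the $\omega_\varepsilon^{\nu|\beta|}$-type losses recorded in the target classes $\MhgSym[-m-k,0]{\nu,\nu k}$, and in particular that no spurious negative power of $\varepsilon$ appears when dividing by the principal symbol — here (\ref{StrongEll}) is exactly what is needed, together with the fact, noted after (\ref{TwoScaleSymb}), that a log-type bound is strong enough to control the quotient whereas a plain $\varepsilon^{-1}$ bound would not be. A secondary technical point is that Proposition~\ref{prop:Product} is stated for symbols on $\mathbb{R}^n\times\mathbb{R}^n$, so one must run the composition on a slightly larger conic set $I'_{\theta_2}\supset I'_{\theta_1}$ (using the extensions $\overline{c}^{(0)}_{j,\varepsilon}$-style truncations already set up in Section~\ref{sec:Fact}) and then restrict back, absorbing the difference into $\mathcal{N}_{{\displaystyle\mathcal{S}}^0}$ on $I'_{\theta_1}$; this is routine given the machinery already developed but needs to be stated carefully.
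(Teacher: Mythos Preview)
Your proposal is correct and follows essentially the same route as the paper: define $\overline{q}_{-m,\varepsilon}=1/\overline{p}_{m,\varepsilon}$ using (\ref{StrongEll}), run the classical parametrix recursion $\overline{q}_{-m-k,\varepsilon}=-\overline{p}_{m,\varepsilon}^{-1}\sum_{|\gamma|+j+l=k,\,l<k}\frac{1}{\gamma!}D_\xi^\gamma\overline{q}_{-m-l,\varepsilon}\,\partial_x^\gamma\overline{p}_{m-j,\varepsilon}$, check membership in $\MhgSym[-m-k,0]{\nu,\nu k}(I'_{\theta_1})$ by induction, and invoke Lemma~\ref{lem:AE1} to assemble $(q_\varepsilon)_\varepsilon$. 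The one minor difference is that the paper carries out the entire construction and the verification of (\ref{psiparametrix}) directly on $I'_{\theta_1}$ (computing $\sum_{|\gamma|<N}\frac{\varepsilon^{|\gamma|}}{\gamma!}D_\xi^\gamma q_\varepsilon\,\partial_x^\gamma p_\varepsilon$ there and observing the cancellations), without the detour through a larger cone $I'_{\theta_2}$ that you anticipate as a secondary technical point; your extra caution there is harmless but not needed.
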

\begin{proof} The proof follows the classical arguments given in \cite[Chapter 2]{Kumano-go:81} for $\varepsilon$-dependent symbols. We construct the generalized symbol of $Q_\psi$ by means of its asymptotic expansion. Therefore we will recursively define symbols $(\overline{q}_{-m-k,\varepsilon})_\varepsilon \in \MhgSym[-m-k,0]{\nu,\nu k}$ on $I'_{\theta_1}$, $k \geq 0$, so that the symbol of $Q_\psi$ is given by (\ref{AE:parametrix}) and satisfies (\ref{psiparametrix}).

Because of (\ref{StrongEll}) we may define for $(y,\tau,\xi) \in I'_{\theta_1}$ and some $\eta \in (0,1]$
\begin{equation*}
\overline{q}_{-m,\varepsilon}(y,\tau,\xi) :=\begin{cases}
\overline{p}_{m,\varepsilon}(y,\tau,\xi)^{-1}, & \varepsilon \in (0,\eta]\\
0, & \text{else.}
\end{cases}
\end{equation*}
Concerning the asymptotic behavior of the derivatives of $\overline{q}_{-m,\varepsilon}$ we use the Leibniz rule and obtain that $\exists C > 0$ such that for every $\varepsilon > 0$ sufficiently small 
\begin{equation*}
\begin{split}
|\partial^{\alpha}_{(\tau,\xi)} \partial^{\beta}_{y} \overline{q}_{-m,\varepsilon}| &= \bigl| \sum_{\substack{\alpha_1 + \ldots + \alpha_\mu = \alpha \\ \beta_1 + \ldots + \beta_\mu = \beta} } \left( \partial^{\alpha_1}_{(\tau,\xi)} \partial^{\beta_1}_{y} \overline{p}_{m,\varepsilon} \right) \ldots \left( \partial^{\alpha_\mu}_{(\tau,\xi)} \partial^{\beta_\mu}_{y} \overline{p}_{m,\varepsilon} \right) \frac{1}{\overline{p}_{m,\varepsilon}^{1+\mu}} \bigr| \leq\\
& \leq C \hspace{1pt} \omega_\varepsilon^{\nu |\beta|} |( \tau, \xi)|^{-m-|\alpha|} \quad \text{on } I'_{\theta_1}
\end{split}
\end{equation*}
from which we deduce that $(\overline{q}_{-m,\varepsilon})_\varepsilon \in \MhgSym[-m,0]{\nu,0}$ on $I'_{\theta_1}$. We now proceed by induction. Therefore we define $(\overline{q}_{-m-k,\varepsilon})_\varepsilon$ on $I'_{\theta_1}$ by
\begin{equation}\label{ConstParametrix}
\overline{q}_{-m-k,\varepsilon} := - \Bigl\{ \sum_{\substack{|\gamma| + j + l =k \\ l <k}} \frac{1}{\gamma!} D^{\gamma}_\xi \overline{q}_{-m-l,\varepsilon} \partial^{\gamma}_x \overline{p}_{m-j,\varepsilon} \Bigr\} \frac{1}{\overline{p}_{m,\varepsilon}} \quad \quad  k \geq 1 \text{ on } I'_{\theta_1}.
\end{equation}
Assuming that $(\overline{q}_{-m-N,\varepsilon})_\varepsilon \in \MhgSym[-m-N,0]{\nu,\nu N}(I'_{\theta_1})$ for $N < k$ then from the construction given in (\ref{ConstParametrix}) one easily verifies that $(\overline{q}_{-m-k,\varepsilon})_\varepsilon \in \MhgSym[-m-k,0]{\nu,\nu k}(I'_{\theta_1})$. 

Moreover by Lemma~\ref{lem:AE1} we get the existence of a polyhomogeneous generalized symbol $(q_\varepsilon)_\varepsilon \in \MphgSym[-m,0]{\nu,0}$ having the following asymptotic expansion
\begin{equation*}
(q_\varepsilon)_\varepsilon \dot{\sim} \sum_{k \ge 0} (\varepsilon^k \overline{q}_{-m-k,\varepsilon})_\varepsilon \qquad \text{in } \MSym[-m,0]{\nu,0} \text{ on } I_{\theta_1}'.
\end{equation*}
Let $Q_\psi$ be the $\psi$-pseudodifferential operator with generalized symbol $(q_\varepsilon)_\varepsilon$. Then (\ref{AE:parametrix}) is satisfied and it remains to show equation (\ref{psiparametrix}). Concerning the asymptotic expansion of $Q_\psi P_\psi$ one has for every $N \geq 1$ 
\begin{equation}\label{Check}
\begin{split}
\sum_{|\gamma| < N} \frac{\varepsilon^{|\gamma|}}{\gamma!} D^{\gamma}_\xi q_\varepsilon \partial^{\gamma}_x p_\varepsilon = & \ \overline{q}_{-m,\varepsilon} \overline{p}_{m,\varepsilon} + \sum_{k=1}^{N-1} \sum_{\substack{|\gamma| + l +j = k}} \frac{\varepsilon^{k}}{\gamma!} D^{\gamma}_\xi  \overline{q}_{-m-l,\varepsilon} \partial^{\gamma}_x  \overline{p}_{m-j,\varepsilon} +\\
& \ + \ \sum_{k \geq N}\sum_{\substack{|\gamma| +l+ j =k \\ |\gamma| < N}} \frac{\varepsilon^{k}}{\gamma!} D^{\gamma}_\xi \overline{q}_{-m-l,\varepsilon} \partial^{\gamma}_x \overline{p}_{m-j,\varepsilon} 
\end{split}
\end{equation}
on $V_K :=I'_{\theta_1} \cap (\mathbb{R}^n \times \{ (\tau,\xi) : |(\tau,\xi)| \geq K \})$ for some $K > 0$ independent of $\varepsilon$.
Here the second term on the right hand side vanishes by (\ref{ConstParametrix}) on $V_K$. Furthermore the last expression of (\ref{Check}) is in $\mathcal{N}_{{\displaystyle\mathcal{S}}^0} (I'_{\theta_1})$ and $\overline{q}_{-m,\varepsilon} \overline{p}_{m,\varepsilon} = 1$ on $V_K$ which establishes the statement made in (\ref{psiparametrix}). 
\end{proof}
Similarly one can construct a $\psi$-pseudodifferential operator $\widetilde{Q}_\psi$ with generalized symbol in $\MphgSym[-m,0]{\nu,0}$ having a representation of the form (\ref{AE:parametrix}) and satisfies 
\begin{equation*}
P_\psi \widetilde{Q}_\psi = I_\psi + \widetilde{R}_\psi \qquad \text{ on } I'_{\theta_1}
\end{equation*}
where the generalized symbol of $\widetilde{R}_\psi$ is in $\mathcal{N}_{{\displaystyle\mathcal{S}}^0}$ on $I'_{\theta_1}$. Furthermore $Q_\psi$ is related to $\widetilde{Q}_\psi$ in the following way
\begin{equation*}
Q_\psi = Q_\psi (P_\psi \widetilde{Q}_\psi) = (Q_\psi P_\psi) \widetilde{Q}_\psi = \widetilde{Q}_\psi \quad \mod{} {\mathop{\rm OP}}_\psi \mathcal{N}_{{\displaystyle\mathcal{S}}^m} \text{ on } I'_{\theta_1}.
\end{equation*}
\subsection*{Diagonalization}
To get an idea we start rewriting the inhomogeneous equation $L_{\psi}U = F$ into a first-order system with respect to the parameter $z$: 
\begin{equation} \label{EquivSystem}
\bigg[ \left(\text{Id} \, \partial_z\right)_{\psi} - \begin{pmatrix} 0 & 1 \\ -A & 0  \end{pmatrix}_{\substack{\! \psi \\ \phantom{} }} \bigg] \begin{pmatrix} U \\ \left(\partial_z\right)_{\psi} U \end{pmatrix} = \begin{pmatrix} 0 \\ F  \end{pmatrix}
\end{equation}
with $U,F \in \mathcal{G}_{2,2}(\mathbb{R}^{n+1})$, $\text{Id}$ the 2$\times$2 identity matrix and $A_\psi$ the $\psi$-pseudodifferential operator with generalized symbol $(a_\varepsilon)_\varepsilon$ as in (\ref{SymbolA}). For brevity we will drop the identity matrix $\text{Id}$ in the equations from now on. Hereafter we are going to reduce the operator in (\ref{EquivSystem}) to diagonal form. To make this notion rigorous we make the following arrangements.

First let
\begin{align*}
Q_\psi = \begin{pmatrix} Q^{11} & Q^{12} \\ Q^{21} & Q^{22} \end{pmatrix}_{\substack{\! \psi \\ \phantom{} }}, & \qquad P_\psi = \begin{pmatrix} P^{11} & P^{12} \\ P^{21} & P^{22} \end{pmatrix}_{\substack{\! \psi \\ \phantom{} }}
\end{align*}
be 2$\times$2 matrices of $\psi$-pseudodifferential operators whose entries satisfy the requirements of Lemma~\ref{lem:psiparametrix}.
Also we choose the top order of the symbols of $Q^{11}_\psi$, $Q^{21}_\psi$ equal to $1-m$ and those of $Q^{12}_\psi$, $Q^{22}_\psi$ equal to $-m$ for some fixed $m \in \mathbb{R}$.

Using Lemma~\ref{lem:psiparametrix} we choose $P_\psi$ to be the approximative inverse of $Q_\psi$ on $I'_{\theta_1}$ in the following sense: with $P^{11}_\psi$, $P^{12}_\psi$ being operators of order $m-1$ and $P^{21}_\psi$, $P^{22}_\psi$ of order $m$ and one has $P_\psi Q_\psi = \text{Id}_\psi + E_\psi$ on $I'_{\theta_1}$ for some 2$\times$2 $\psi$-pseudodifferential operator matrix $E_\psi$ of the form
\begin{equation*}
E_\psi = \begin{pmatrix} E^{11} & E^{12} \\ E^{21} & E^{22} \end{pmatrix}_{\substack{\! \psi \\ \phantom{} }}
\end{equation*}
and $E^{11}_\psi$, $E^{22}_\psi \in {\mathop{\rm OP}}_\psi \mathcal{N}_{{\displaystyle\mathcal{S}}^0}$, $E^{12}_\psi \in {\mathop{\rm OP}}_\psi \mathcal{N}_{{\displaystyle\mathcal{S}}^{-1}}$ and $E^{21}_\psi \in {\mathop{\rm OP}}_\psi \mathcal{N}_{{\displaystyle\mathcal{S}}^1}$ on $I'_{\theta_1}$.

Then by (\ref{EquivIML}) and (\ref{EquivSystem}) the equation
\begin{equation*}
L_\psi U = F \qquad \text{ microlocally at infinity on } I_{\theta_1} 
\end{equation*}
holds if and only if 
\begin{equation*}
g_\psi Q_\psi \begin{pmatrix} \partial_z & -1  \\ A & \partial_z  \end{pmatrix}_{\substack{\! \psi \\ \phantom{} }}  P_\psi Q_\psi \begin{pmatrix} U \\ \left(\partial_z\right)_{\psi} U \end{pmatrix} = g_\psi Q_\psi \begin{pmatrix} 0 \\ F \end{pmatrix} \ \quad \text{microlocally at infinity on } I_{\theta_1}
\end{equation*}
where $g_\psi$ is the microlocal cut-off function from the previous section.

Furthermore we will use the following notation: for $l=1,2$ and $k \in \mathbb{N}$ we denote by $\OPNerror{k}{l}$ operators of the form
\begin{equation*}
\sum_{k \leq j \leq k+l} R_{2-j,\psi}(y,D_t,D_x) (\partial_z^{j-k})_\psi
\end{equation*}
and the generalized symbol of $R_{2-j,\psi} = R_{2-j,\psi}(y,D_t,D_x)$ is in $\mathcal{N}_{{\displaystyle\mathcal{S}}^{2-j}}$ for $k \leq j \leq k+l$. 

In the following we assume that $R_\psi$ is a $\psi$-pseudodifferential operator valued 2$\times$2 error matrix with entries in $\OPNerror{1}{1}$ on $I'_{\theta_1}$. Also, we let $B_{\pm,\psi}= B_{\pm,\psi}(y,D_t,D_x) \in {\mathop{\rm OP}}_\psi \MphgSym[1,0]{1,0}$ on $I'_{\theta_1}$.

In order to obtain a diagonalization for the operator in (\ref{EquivSystem}) we will search for operators $P_\psi, Q_\psi, R_\psi$ and $B_{\pm,\psi}$ as above such that the following ansatz is valid in the region $I'_{\theta_1}$:
\begin{equation}\label{Ansatz}
Q_{\psi} \bigg[ \left(\partial_z\right)_\psi - \begin{pmatrix} 0 & 1  \\ -A & 0  \end{pmatrix}_{\substack{\! \psi \\ \phantom{} }} \bigg] P_\psi = \begin{pmatrix} \partial_z - i B_{+} & 0 \\ 0 & \partial_z - i B_{-} \end{pmatrix}_{\substack{\! \psi \\ \phantom{} }} + R_\psi.
\end{equation}
To show the existence of these operators we will construct them explicitly. Therefore we presume that (\ref{Ansatz}) can be solved for some $P_\psi, Q_\psi, R_\psi$ and $B_{\pm,\psi}$ with the above conditions. 

Multiplying equation (\ref{Ansatz}) with $Q_\psi \left( \begin{smallmatrix} 1 \\ \partial_z  \end{smallmatrix}\right)_{\substack{\! \psi \\ \phantom{} }}$ from the right we obtain for the left hand side an expression of the form
\begin{equation}\label{ExpandAnsatz1}
Q_\psi \begin{pmatrix} \partial_z & -1 \\ A & \partial_z \end{pmatrix}_{\substack{\! \psi \\ \phantom{} }} P_\psi Q_\psi \begin{pmatrix} 1 \\ \partial_z \end{pmatrix}_{\substack{\! \psi \\ \phantom{} }} = \begin{pmatrix} Q^{12}_\psi ( \partial^2_z + A )^{}_{\psi}  + S^{(1,+)}_{\psi}  \\ Q^{22}_\psi (\partial^2_z + A )^{}_{\psi} + S^{(1,-)}_{\psi}  \end{pmatrix} \quad \mbox{on} \ I'_{\theta_1}
\end{equation}
with $S^{(1,\pm)}_\psi \in \OPNerror{m}{2}$ on $I'_{\theta_1}$. Similarly we compute for the right hand side
\begin{multline}\label{ExpandAnsatz2}
\quad \bigg[ \begin{pmatrix} \partial_z \hspace{-2pt} - \hspace{-1pt} i B_{+} & 0 \\ 0 & \partial_z \hspace{-2pt} - \hspace{-1pt} i B_{-} \end{pmatrix}_{\substack{\! \psi \\ \phantom{} }} + R_\psi \bigg] Q_\psi \begin{pmatrix} 1 \\ \partial_z  \end{pmatrix}_{\substack{\! \psi \\ \phantom{} }} = \\ = \begin{pmatrix} \enspace (\partial_z \hspace{-1pt} - \hspace{-1pt} iB_{+})^{}_{\psi} (Q^{12}_\psi (\partial_z)^{}_{\psi} + Q^{11}_\psi) + S^{(2,+)}_\psi \enspace \\ (\partial_z \hspace{-2pt} - \hspace{-1pt} iB_{-})^{}_{\psi} (Q^{22}_\psi (\partial_z)^{}_{\psi} + Q^{21}_\psi) + S^{(2,-)}_\psi \end{pmatrix} \quad \mbox{on} \ I'_{\theta_1} \quad
\end{multline}
where $S^{(2,\pm)}_\psi \in \OPNerror{m}{2}$ on $I'_{\theta_1}$. Combining (\ref{ExpandAnsatz1}) and (\ref{ExpandAnsatz2}) we get an improved ansatz which reads
\renewcommand{\arraystretch}{1.6}
\begin{equation}\label{RefinedAnsatz}
\begin{aligned}
Q^{12}_\psi ( \partial^2_z + A )^{}_{\psi} &= (\partial_z - iB_{+})^{}_{\psi} \bigl(Q^{12}_\psi (\partial_z)^{}_{\psi} + Q^{11}_\psi \bigr) + R^{(+)}_\psi \quad \mbox{on} \ I'_{\theta_1} \\
Q^{22}_\psi ( \partial^2_z + A )^{}_{\psi} &= (\partial_z - iB_{-})^{}_{\psi} \bigl(Q^{22}_\psi (\partial_z)^{}_{\psi} + Q^{21}_\psi \bigr) + R^{(-)}_\psi \quad \mbox{on} \ I'_{\theta_1},
\end{aligned}
\end{equation}
with $R^{(\pm)}_\psi \in \OPNerror{m}{2}$ on $I'_{\theta_1}$.

We note that (\ref{RefinedAnsatz}) is a coupled system of two equations each of which stating a factorization similar to (\ref{eqn:Fact}) in Theorem~\ref{thm:Fact}. In the following we relate (\ref{eqn:Fact}) to (\ref{RefinedAnsatz}) which will guarantee existence of the ansatz made in (\ref{Ansatz}). Now since Theorem~\ref{thm:Fact} allows two different factorizations (depending on the sign of the principal symbol) we will modify both of them to deduce the refined ansatz (\ref{RefinedAnsatz}). 

Thus, with a view to Theorem~\ref{thm:Fact}, we first write $L_\psi= ( \partial_z^2  + A )_\psi$ in the following form
\begin{equation}\label{Fact1}
L_\psi = ( \partial_z + A_{11} )_\psi ( \partial_z + A_{12} )_\psi + \Gamma_{1,\psi} \quad \mbox{on} \ I'_{\theta_1}
\end{equation}
and for $j=\! 1,2$, $A_{1j,\psi} = A_{1j,\psi}(y,D_t,D_x)$ is a polyhomogeneous $\psi$-pseudodifferential operator with generalized symbol $(a_{1j,\varepsilon})_\varepsilon$ as in Theorem~\ref{thm:Fact}. Moreover we choose $A_{11,\psi}$ and $-A_{12,\psi}$ such that their principal symbols are equal to $(-i \sqrt{a_\varepsilon})_\varepsilon$. Furthermore $\Gamma_{1,\psi}$ is in $\OPNerror{0}{1}$ on $I'_{\theta_1}$. Likewise we obtain
\begin{equation}\label{Fact2}
L_\psi = ( \partial_z + A_{21} )_\psi ( \partial_z + A_{22} )_\psi + \Gamma_{2,\psi} \quad \mbox{on} \ I'_{\theta_1}
\end{equation}
where $A_{2j,\psi} = A_{2j,\psi}(y,D_t,D_x)$, $j=1,2$ are polyhomogeneous $\psi$-pseudodifferential operators but at this point the top order symbols of $A_{21,\psi}$ and $-A_{22,\psi}$ equal to $(i \sqrt{a_\varepsilon})_\varepsilon$. Again $\Gamma_{2,\psi}$ is in $\OPNerror{0}{1}$ on $I'_{\theta_1}$.

An expansion in (\ref{Fact1}) and (\ref{Fact2}) then gives the following for $j=1,2$
\begin{equation*}
(\partial^2_z + A)_\psi = (\partial^2_z)_\psi + (A_{j1}+A_{j2})_\psi(\partial_z)_\psi + {\mathop{\rm OP}}_\psi\bigl( (\varepsilon \partial_z a_{j2})_\varepsilon \bigr) + A_{j1,\psi}A_{j2,\psi} + \Gamma_{j,\psi}
\end{equation*}
on $I'_{\theta_1}$ where we have used the Leibniz rule. By construction of the generalized symbols of $A_{j1,\psi}$ and $A_{j2,\psi}$ we observe that $(a_{j1,\varepsilon})_\varepsilon = (-a_{j2,\varepsilon})_\varepsilon$ modulo $\mathcal{N}_{{\displaystyle\mathcal{S}}^1}$ on $I'_{\theta_1}$, $j=1,2$. Using this (\ref{Fact1}) and (\ref{Fact2}) then read
\begin{equation}\label{TwoFact}
\begin{split}
(\partial^2_z + A)_\psi 
&= (\partial_z + A_{11})_\psi (\partial_z - A_{11})_\psi \quad \mod{} \OPNerror{0}{1} \ \mbox{on} \ I'_{\theta_1} \\
&= (\partial_z + A_{21})_\psi (\partial_z - A_{21})_\psi \quad \mod{} \OPNerror{0}{1} \ \mbox{on} \ I'_{\theta_1}. 
\end{split}
\end{equation}
With $m$ being the fixed real number from the beginning of this section we now choose $\widetilde{Q}^{(\pm)}_\psi \in {\mathop{\rm OP}}_\psi \MphgSym[m,0]{1,0}$ and so that the requirements of Lemma~\ref{lem:psiparametrix} are fulfilled. Using the same result we obtain the existence of two $\psi$-pseudodifferential operators $Q^{(\pm)}_\psi$ with generalized symbols in $\MphgSym[-m,0]{1,0}$ satisfying (\ref{AE:parametrix}) and
\begin{equation*}
\widetilde{Q}^{(\pm)}_\psi Q^{(\pm)}_\psi = Q^{(\pm)}_\psi \widetilde{Q}^{(\pm)}_\psi = I_\psi \quad \mod{} {\mathop{\rm OP}}_\psi \mathcal{N}_{{\displaystyle\mathcal{S}}^0} \text{ on } I'_{\theta_1}.
\end{equation*}
Inserting $\widetilde{Q}^{(+)}_\psi Q^{(+)}_\psi$ into the first line of (\ref{TwoFact}) and $\widetilde{Q}^{(-)}_\psi Q^{(-)}_\psi$ into the second line yields
\begin{equation*}
\begin{aligned}
(\partial^2_z + A)_\psi 
&= (\partial_z + A_{11})^{}_{\psi} \widetilde{Q}^{(+)}_\psi \bigl(Q^{(+)}_\psi (\partial_z)^{}_{\psi} - Q^{(+)}_\psi A_{11,\psi}^{} \bigr) & \mod{} \OPNerror{0}{2} \text{ on } I'_{\theta_1}\ \\
&= (\partial_z + A_{21})^{}_{\psi} \widetilde{Q}^{(-)}_\psi \bigl(Q^{(-)}_\psi (\partial_z)^{}_{\psi} - Q^{(-)}_\psi A_{21,\psi}^{} \bigr) & \mod{} \OPNerror{0}{2} \text{ on } I'_{\theta_1}.
\end{aligned}
\end{equation*}
Here we define
\begin{equation}\label{SolutionsB}
\begin{aligned}
-iB_{+,\psi}^{}  & :=  Q^{(+)}_\psi A_{11,\psi}^{} \widetilde{Q}^{(+)}_\psi 
- {\mathop{\rm OP}}^{}_{\psi} \bigl( (\varepsilon \partial_z q_\varepsilon^{(+)} )^{}_{\varepsilon} \bigr) \widetilde{Q}^{(+)}_\psi \ \\
-iB_{-,\psi}^{}  & :=  Q^{(-)}_\psi A_{21,\psi}^{} \widetilde{Q}^{(-)}_\psi 
- {\mathop{\rm OP}}^{}_{\psi} \bigl( (\varepsilon \partial_z q_\varepsilon^{(-)} )^{}_{\varepsilon} \bigr) \widetilde{Q}^{(-)}_\psi.
\end{aligned}
\end{equation}
Consequently, $B_{\pm,\psi}$ are polyhomogeneous $\psi$-pseudodifferential operators with real-valued top order symbol in $\MhgSym[1,0]{1,0}$ on $I'_{\theta_1}$. Furthermore a straightforward calculation shows that with this choices for $B_{\pm,\psi}$ we have 
\begin{align*}
\bigl( \partial_z + A_{11} \bigr)_\psi \widetilde{Q}^{(+)}_\psi &= \widetilde{Q}^{(+)}_\psi \bigl( \partial_z - i B_{+} \bigr)_\psi \quad \mod{} \OPNerror{1-m}{0} \ \mbox{on} \ I'_{\theta_1}\\
\bigl( \partial_z + A_{21} \bigr)_\psi \widetilde{Q}^{(-)}_\psi &= \widetilde{Q}^{(-)}_\psi \bigl( \partial_z - i B_{-} \bigr)_\psi \quad \mod{} \OPNerror{1-m}{0} \ \mbox{on} \ I'_{\theta_1}.
\end{align*}
leading to the following decomposition on $I'_{\theta_1}$
\begin{align*}
\bigl( \partial^2_z  + A \bigr)^{}_{\psi}
 & = \widetilde{Q}^{(+)}_\psi \big( \partial_z - iB_{+} \big)^{}_{\psi} \big( Q^{(+)}_\psi  (\partial_z)^{}_\psi - Q^{(+)}_\psi A^{}_{11,\psi} \big) \quad \mod{} \OPNerror{0}{2}\phantom{.} \\
 & = \widetilde{Q}^{(-)}_\psi \big( \partial_z - iB_{-} \big)^{}_{\psi} \big( Q^{(-)}_\psi  (\partial_z)^{}_\psi - Q^{(-)}_\psi A^{}_{21,\psi} \big) \quad \mod{} \OPNerror{0}{2}.
\end{align*}
Comparing this with the refined ansatz (\ref{RefinedAnsatz}) we make the following choice for the matrix $Q_\psi$ from the beginning
\begin{align*}
   & Q^{11}_\psi := - Q^{(+)}_\psi A^{}_{11,\psi}, \hspace{-0.7cm} & Q^{12}_\psi &:= Q^{(+)}_\psi,& \\
   & Q^{21}_\psi := - Q^{(-)}_\psi A^{}_{21,\psi}, \hspace{-0.7cm} & Q^{22}_\psi &:= Q^{(-)}_\psi.&
\end{align*}
Hence
\begin{equation}\label{TransferMatrix}
Q_\psi = \begin{pmatrix} -Q^{(+)}_\psi A^{}_{11,\psi} & Q^{(+)}_\psi \\ -Q^{(-)}_\psi A^{}_{21,\psi} & Q^{(-)}_\psi \end{pmatrix} = \begin{pmatrix} Q^{(+)} & 0 \\ 0 & Q^{(-)} \end{pmatrix}_{\substack{\! \psi \\ \phantom{} }} \begin{pmatrix} -A_{11} & 1 \\ -A_{21} & 1 \end{pmatrix}_{\substack{\! \psi \\ \phantom{} }} \hspace{5pt}
\end{equation}
and $Q^{(\pm)}_\psi$ are polyhomogeneous $\psi$-pseudodifferential operators of order $-m$ on $I'_{\theta_1}$ and elliptic in the sense of (\ref{StrongEll}).

Since the top order symbol of $A_{j1,\psi}$ is given by $(\pm i \sqrt{a_\varepsilon})_\varepsilon$, $j=1,2$ it follows from (\ref{TransferMatrix}) that $Q_\psi$ is elliptic in the sense of (\ref{StrongEll}). Furthermore, the approximative inverse matrix $P_\psi$ of $Q_\psi$ is given by
\begin{equation*}
P_\psi = \begin{pmatrix} -C_\psi & C_\psi \\ -A_{21,\psi} C_\psi & 1_\psi + A_{21,\psi} C_\psi \end{pmatrix} \begin{pmatrix} \widetilde{Q}^{(+)} & 0 \\ 0 & \widetilde{Q}^{(-)} \end{pmatrix}_{\substack{\! \psi \\ \phantom{} }} \qquad \text{on } I'_{\theta_1}
\end{equation*}
where $C_\psi$ is the generalized parametrix of $(A_{11} - A_{21})_\psi$ in the sense of Lemma~\ref{lem:psiparametrix}. We have therefore found an appropriate operator-valued matrix $P_\psi,Q_\psi,R_\psi$ and operators $B_{\pm,\psi}$ solving (\ref{Ansatz}).
We have therefore proved the following theorem:
\begin{thm}
Let $L_\psi$ as in (\ref{GovOp}) and $U, F \in \mathcal{G}_{2,2}(\mathbb{R}^{n+1})$. Then there are operators $Q_\psi$ as in (\ref{TransferMatrix}) and $B_{\pm,\psi}$ as in (\ref{SolutionsB}) such that the equation
\begin{equation}\label{eqn:IMLGov}
L_\psi U = F \quad \mbox{microlocally at infinity on} \ I_{\theta_1}
\end{equation}
holds if and only if
\begin{equation}\label{eqn:IMLSystem}
\begin{aligned}
\bigl( \partial_z - iB_{+}(y,D_t,D_x) \bigr)_\psi u_{+} &= f_{+} \quad \mbox{microlocally at infinity on } I_{\theta_1} \mbox{ and}\\
\bigl( \partial_z - iB_{-}(y,D_t,D_x) \bigr)_\psi u_{-} &= f_{-} \quad \mbox{microlocally at infinity on } I_{\theta_1}.
\end{aligned}
\end{equation}
Furthermore the coupling effect is computed as follows
\begin{equation}\nonumber
\begin{pmatrix} u_{+} \\ u_{-} \end{pmatrix} := Q_\psi \begin{pmatrix} U \\ (\partial_z)_\psi U \end{pmatrix} , \hspace{25pt} \begin{pmatrix} f_{+} \\ f_{-} \end{pmatrix} := Q_\psi \begin{pmatrix} 0 \\ F \end{pmatrix}.
\end{equation}
\end{thm}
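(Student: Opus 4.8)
The plan is to collect the ingredients already assembled: the matrix $Q_\psi$ is the one in (\ref{TransferMatrix}), the operators $B_{\pm,\psi}$ are given by (\ref{SolutionsB}), and together with the approximate inverse $P_\psi$ and the $2\times 2$ error matrix $R_\psi$ with entries in $\OPNerror{1}{1}$ on $I'_{\theta_1}$ they satisfy the ansatz (\ref{Ansatz}) on $I'_{\theta_1}$, with $P_\psi Q_\psi = \mathrm{Id}_\psi + E_\psi$ on $I'_{\theta_1}$ for the error matrix $E_\psi$ described before (\ref{Ansatz}) (entries in ${\mathop{\rm OP}}_\psi\mathcal{N}_{{\displaystyle\mathcal{S}}^{0}}$, ${\mathop{\rm OP}}_\psi\mathcal{N}_{{\displaystyle\mathcal{S}}^{\pm 1}}$ on $I'_{\theta_1}$). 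So the only thing left is to transfer these identities, valid on the cone $I'_{\theta_1}$, into a microlocal statement at infinity on $I_{\theta_1}$. The geometric bridge is that $I_{\theta_1}$ is the part of $T^*\mathbb{R}^{n+1}\setminus 0$ lying over $I'_{\theta_1}$ with $|\zeta|\le\sqrt{c_1}|\tau|$; hence any operator whose operator-valued symbol is negligible on $I'_{\theta_1}$ sends a moderate net to something with infinite wave front set disjoint from $I_{\theta_1}$ (Theorem~\ref{thm:InftyWF}), and the microlocal cut-off $g_\psi$ of the previous section acts as the identity microlocally at infinity on $I_{\theta_1}$ because ${\mathop{\rm conesupp}}^{\rm i}(g-1)\cap I_{\theta_1}=\emptyset$.

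For the implication (\ref{eqn:IMLGov})$\Rightarrow$(\ref{eqn:IMLSystem}) I would start from the exact reformulation (\ref{EquivSystem}) of $L_\psi U=F$, microlocalize it at infinity on $I_{\theta_1}$, and apply $g_\psi Q_\psi$ from the left; since $g_\psi Q_\psi\left(\begin{smallmatrix}0\\F\end{smallmatrix}\right)=Q_\psi\left(\begin{smallmatrix}0\\F\end{smallmatrix}\right)=\left(\begin{smallmatrix}f_+\\f_-\end{smallmatrix}\right)$ microlocally at infinity on $I_{\theta_1}$, it remains to rewrite the left-hand side. There I insert $\mathrm{Id}_\psi=P_\psi Q_\psi-E_\psi$ between $\left(\begin{smallmatrix}\partial_z & -1\\ A & \partial_z\end{smallmatrix}\right)_\psi$ and the column $\left(\begin{smallmatrix}U\\(\partial_z)_\psi U\end{smallmatrix}\right)$ — legitimate because the surrounding factor $g_\psi Q_\psi\left(\begin{smallmatrix}\partial_z & -1\\ A & \partial_z\end{smallmatrix}\right)_\psi$ makes the whole composition an honest $\psi$-pseudodifferential operator, so products of the $L_{1,\psi}L_{2,\psi}$ type (which are not such operators on $\mathbb{R}^{n+1}$) never occur uncut. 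Using (\ref{Ansatz}) to replace $Q_\psi\left(\begin{smallmatrix}\partial_z & -1\\ A & \partial_z\end{smallmatrix}\right)_\psi P_\psi$ by $\mathrm{diag}(\partial_z-iB_+,\partial_z-iB_-)_\psi+R_\psi$, and noting that both the $E_\psi$-contribution and the $R_\psi$-contribution have operator-valued symbols of class $\mathcal{N}_{{\displaystyle\mathcal{S}}^{\bullet}}$ on $I'_{\theta_1}$ and hence are negligible microlocally at infinity on $I_{\theta_1}$ when applied to the moderate net $\bigl(U,(\partial_z)_\psi U\bigr)$, one is left with $g_\psi\,\mathrm{diag}(\partial_z-iB_+,\partial_z-iB_-)_\psi\left(\begin{smallmatrix}u_+\\u_-\end{smallmatrix}\right)=\left(\begin{smallmatrix}f_+\\f_-\end{smallmatrix}\right)$ microlocally at infinity on $I_{\theta_1}$, where $\left(\begin{smallmatrix}u_+\\u_-\end{smallmatrix}\right)=Q_\psi\left(\begin{smallmatrix}U\\(\partial_z)_\psi U\end{smallmatrix}\right)$; dropping $g_\psi$ gives the decoupled pair (\ref{eqn:IMLSystem}).

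The converse runs the same computation in reverse. Starting from (\ref{eqn:IMLSystem}), add the microlocally negligible term $R_\psi\left(\begin{smallmatrix}u_+\\u_-\end{smallmatrix}\right)$ and invoke (\ref{Ansatz}) to obtain $Q_\psi\left(\begin{smallmatrix}\partial_z & -1\\ A & \partial_z\end{smallmatrix}\right)_\psi P_\psi Q_\psi\left(\begin{smallmatrix}U\\(\partial_z)_\psi U\end{smallmatrix}\right)=Q_\psi\left(\begin{smallmatrix}0\\F\end{smallmatrix}\right)$ microlocally at infinity on $I_{\theta_1}$. Left-multiplying by $P_\psi$ (which by $P_\psi Q_\psi=\mathrm{Id}_\psi+E_\psi$ on $I'_{\theta_1}$ is a left parametrix of $Q_\psi$) and using this relation twice, with $E_\psi$ negligible on $I'_{\theta_1}$ — also after being hit by the finite-order operator $\left(\begin{smallmatrix}\partial_z & -1\\ A & \partial_z\end{smallmatrix}\right)_\psi$, by Theorem~\ref{thm:InftyWF} — one arrives at $\left(\begin{smallmatrix}\partial_z & -1\\ A & \partial_z\end{smallmatrix}\right)_\psi\left(\begin{smallmatrix}U\\(\partial_z)_\psi U\end{smallmatrix}\right)=\left(\begin{smallmatrix}0\\F\end{smallmatrix}\right)$ microlocally at infinity on $I_{\theta_1}$, which by (\ref{EquivSystem}) is exactly $L_\psi U=F$ microlocally at infinity on $I_{\theta_1}$. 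The coupling formulas are precisely the substitutions $\left(\begin{smallmatrix}u_+\\u_-\end{smallmatrix}\right)=Q_\psi\left(\begin{smallmatrix}U\\(\partial_z)_\psi U\end{smallmatrix}\right)$ and $\left(\begin{smallmatrix}f_+\\f_-\end{smallmatrix}\right)=Q_\psi\left(\begin{smallmatrix}0\\F\end{smallmatrix}\right)$ used along the way, and the ellipticity of $Q_\psi$ in the sense of (\ref{StrongEll}) on $I'_{\theta_1}$ — which guarantees that $P_\psi$ with the stated error properties exists — is read off from (\ref{TransferMatrix}) together with the top-order symbols $\mp i\sqrt{a_\varepsilon}$ of $A_{11,\psi},A_{21,\psi}$ and the strict positivity of $a_\varepsilon$ on $I'_{\theta_1}$.

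The genuinely delicate point is the bookkeeping of remainders rather than the algebra, the latter being already carried out in deriving (\ref{RefinedAnsatz}) from the two factorizations (\ref{Fact1})--(\ref{Fact2}). One must check that \emph{every} error produced along the way — the parametrix error $E_\psi$, the Leibniz and commutator remainders $S^{(i,\pm)}_\psi$ arising when $(\partial_z)_\psi$ is moved past the entries of $Q_\psi$ (identified above as elements of $\OPNerror{m}{2}$ on $I'_{\theta_1}$), and the matrix $R_\psi$ itself — has operator-valued symbol in the appropriate $\mathcal{N}_{{\displaystyle\mathcal{S}}^{\bullet}}$-class on $I'_{\theta_1}$, and that such negligibility on the cone $I'_{\theta_1}$ in the $(y,\tau,\xi)$-variables, combined with the $\zeta$-cut-off encoded in $g_\psi$, genuinely forces the infinite wave front set of the corresponding output to be disjoint from $I_{\theta_1}$. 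Keeping $g_\psi$ in front of the non-$\psi$-pseudodifferential products throughout, and verifying that $Q_\psi$, $P_\psi$ and the diagonal operator inherit symbols in the classes of Definition~\ref{defn:MSymb} so that Proposition~\ref{prop:Product} applies to all the compositions above, is where the remaining effort lies.
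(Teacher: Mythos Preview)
Your proposal is correct and follows essentially the same route as the paper: the theorem is stated there as an immediate consequence of the construction culminating in the ansatz (\ref{Ansatz}), together with the microlocal equivalence displayed just before (\ref{Ansatz}) (which already contains the ``if and only if'' via $g_\psi$, the ellipticity of $Q_\psi$, and $P_\psi Q_\psi=\mathrm{Id}_\psi+E_\psi$ on $I'_{\theta_1}$). You have simply made the two implications and the remainder bookkeeping more explicit than the paper does, but the ingredients, the order in which they are combined, and the role of Theorem~\ref{thm:InftyWF} and the cut-off $g_\psi$ are the same.
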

\subsection*{Closing remarks.} Let us briefly summarize the above. First we explained a factorization procedure for the semiclassical operator $L_\psi$ in the non-trivial case of generalized coefficients of log-type when acting on $\mathcal{G}_{2,2}$. To overcome the error made in this factorization we further introduced an adapted notion of microlocal regularity.

Concerning the motivating part of this paper we want to make the following remarks. Because of the lack of an adequate description of propagation of singularities in this setting it is not clear so far if and how one can derive approximated solutions to (\ref{eqn:IMLGov}) from solutions of a perturbation of the problem (\ref{eqn:IMLSystem}) as we have seen in the smooth case in Subsection~\ref{subsec:PreviousResults}. Even if we allow the operator $L_\varepsilon$ given in (\ref{eqn:Gov}) to have logarithmic slow scale regular coefficients this problem remains unsolved. Once again, we want to mention that in the case of logarithmic slow scale case no semiclassical interpretation of the situation is necessary and microlocal regularity is based on local $\mathcal{G}_{2,2}^\infty$-regularity. 

Moreover we note that apart from the microlocal restrictions in the equations of (\ref{eqn:IMLSystem}) the operators $L_{j,\psi}$ meet the conditions of Theorem 3.1 of \cite{Hoermann:04}, $j=1,2$, if the coefficients in (\ref{eqn:Gov}) from the beginning are of log-type with an appropriately chosen exponent $r \in \mathbb{N}$ which depends only on the dimension $n$. More precisely $r$ plays the same role as $k$ does in Remark 3.2 in \cite{Hoermann:04}. Therefore if it is possible to associate to (\ref{eqn:IMLSystem}) a global description of the same by only slight manipulations of the operators $L_{j,\psi} = \bigl( \partial_z - iB_{\pm}(y,D_t,D_x) \bigr)_\psi$, $j=1,2$ in a microlocal sense (as in the smooth setting) one can derive well-posedness to the corresponding Cauchy problems which in turn approximate (\ref{eqn:IMLGov}) on $I_{\theta_1}$. 

However, as already pointed out in Subsection~\ref{subsec:ScColombeau} we so far only handled the case where the semiclassical asymptotic regime was restricted to $\hbar(\varepsilon)=\varepsilon$ as $\varepsilon \to 0$. This suggests itself to ask for general criteria of other possible choices for the semiclassical scale $\hbar(\varepsilon)$. Under the viewpoint of parameter-dependent representation theory for generalized pseudodifferential operators this will hopefully also give more insight in the notion of microlocal regularity at infinity. Here a future aim is to obtain a refined notion of microlocalization which is capable to give a global characterization and hence has to include regularity results for Colombeau generalized objects also for finite points.
\vspace{10pt}

\textbf{Acknowlegdments.}
The author is very grateful to G\"{u}nther H\"{o}rmann for many helpful discussions. 
\bibliographystyle{amsplain}

\end{document}